\newtheorem{Thm}{Theorem}
\newtheorem{Prop}{Proposition}[section]
\newtheorem{Lem}[Prop]{Lemma}
\theoremstyle{definition}
\theoremstyle{remark}
\newtheorem{Rem}{Remark}
\newcommand{\R}{\mathbb{R}}
\newcommand{\N}{\mathbb{N}}
\newcommand{\abs}[1]{\lvert #1 \rvert}
\newcommand{\norm}[1]{\left\| #1 \right\|}
\newcommand{\DR}{{\mathcal D}}
\newcommand{\Dp}[2]{\frac{\partial #1}{\partial #2}}
\newcommand{\dist}{\mathop{\mathrm{dist}}}  		
\title[Concentration on circles for nonlinear Schr\"odinger-Poisson]{Concentration on circles for nonlinear
Schr\"odinger-Poisson systems with unbounded potentials vanishing at infinity}
\author{Denis Bonheure}
\address{
  D{\'e}partement de Math{\'e}matique\\
  Universit{\'e} libre de Bruxelles, CP 214\\
  Boulevard du Triomphe, B-1050 Bruxelles, Belgium}
\email{denis.bonheure@ulb.ac.be}
\author{Jonathan Di Cosmo}
\address{
  D{\'e}partement de Math{\'e}matique\\
  Universit{\'e} catholique de Louvain\\
  Chemin du Cyclotron 2, 1348 Louvain-la-Neuve, Belgium}
\address{
  D{\'e}partement de Math{\'e}matique\\
  Universit{\'e} libre de Bruxelles, CP 214\\
  Boulevard du Triomphe, 1050 Bruxelles, Belgium}
\email{Jonathan.DiCosmo@uclouvain.be}
\thanks{Jonathan Di Cosmo is a research fellow of the Fonds de la Recherche Scientifique--FNRS}
\author{Carlo Mercuri}
\address{S.I.S.S.A./I.S.A.S.\\ Via Bonomea 265, 34136 Trieste, Italy
}
\email{mercuri@sissa.it }
\keywords{Stationary nonlinear Schr\"odinger-Poisson system;  weighted Sobolev spaces; degenerate potentials}
\thanks{}
\subjclass[2000]{35J20 (35B65, 35J60, 35Q55)}
\date{\today}
\begin{document}
\begin{abstract}
The present paper is devoted to weighted Nonlinear Schr\"odinger- Poisson systems with potentials possibly unbounded and
vanishing at infinity. Using a purely variational approach, we prove the existence of solutions concentrating on a
circle.
\end{abstract}

\maketitle


\section{Introduction}

The aim of the present paper is to study of the behavior of a certain class of solutions for the following nonlinear Schr\"odinger-Poisson system
\begin{equation} \label{NLSP}
\left\{
\begin{array}{l}
-\varepsilon^2 \Delta u+V(x)u+\rho(x)\phi u = K(x)u^p, \,\,\,\,\,\, x \in {\mathbb R}^3,  \\
  \\
-\Delta \phi=\rho(x)u^{2},
\end{array}
\right.
\end{equation}
in the semiclassical limit, namely for $\varepsilon\rightarrow 0,$ where $\varepsilon$ stands for the reduced Planck constant $\hbar$. In particular, we focus on solutions concentrating on a circle.

Let us
choose any $1$-dimensional linear subspace $d \subset \R^3$. We denote by $\pi$
the orthogonal complement of $d$.
If $x \in \R^3$, we will write $x = (x',x'')$ with $x' \in d$ and
$x'' \in \pi$.

As a particular case of our main result, we have the following theorem:
\begin{Thm}\label{particularcase}
Let $p>3$ and $V \in C(\R^3 \backslash \left\{ 0 \right\},\R^+)$ be a
radial potential. Write $V(x) = \tilde{V}(x',\abs{x''})$. If there exists $r^* >0$ such that the function
\begin{align*}
\mathcal{M}(r) := r \left[\tilde{V}(0,r)\right]^{\frac{2}{p-1}}
\end{align*}
has an isolated local minimum at $r = r^*$ such that $\mathcal{M}(r^*)>0$, then for
$\varepsilon$ small enough, the system
\begin{equation*}
\left\{
\begin{array}{l}
-\varepsilon^2 \Delta u+V(x)u+\phi u = u^p, \,\,\,\,\,\, x \in {\mathbb R}^{3},  \\
-\Delta \phi = u^{2},
\end{array}
\right.
\end{equation*}
has a positive cylindrically symmetric solution
$u_{\varepsilon}$ that concentrates on the circle of radius $r^*$ centered at the origin and contained in the plane
$\pi$.
\end{Thm}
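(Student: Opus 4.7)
The plan is to reduce the system to a single nonlocal equation in the cylindrically symmetric setting, identify the function $\mathcal{M}$ as the leading-order energy profile along circles, and then locate a critical point near $r=r^{*}$ by a penalization and min-max argument.

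First, solving the Poisson equation gives $\phi_u(x) = (4\pi)^{-1} \int_{\R^3} \rho(y) u^2(y)/|x-y|\, dy$, so the system reduces to a single nonlocal Schr\"odinger equation whose energy functional reads
\begin{equation*}
J_\varepsilon(u) = \frac{\varepsilon^2}{2}\int |\nabla u|^2 + \frac{1}{2}\int V u^2 + \frac{1}{4}\int \rho\,\phi_u\, u^2 - \frac{1}{p+1}\int |u|^{p+1}.
\end{equation*}
I would work on the subspace of functions invariant under rotations around the axis $d$, so that by the principle of symmetric criticality the critical points are cylindrically symmetric solutions of the original system.

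Next, the function $\mathcal{M}$ arises from a direct test-function computation. Take a trial function $u_{\varepsilon,r}$ obtained by smearing around the circle of radius $r$ in $\pi$ a rescaled ground state $U$ of the autonomous two-dimensional equation $-\Delta U + U = U^p$, namely of the form $[\tilde V(0,r)]^{1/(p-1)} U\bigl(\sqrt{\tilde V(0,r)}\,\xi/\varepsilon\bigr)$ in the normal section to the circle. Expanding $J_\varepsilon$ on this family in cylindrical coordinates one obtains
\begin{equation*}
J_\varepsilon(u_{\varepsilon,r}) = 2\pi c_0 \,\varepsilon^{2} \,\mathcal{M}(r) + o(\varepsilon^{2}),
\end{equation*}
where $c_0>0$ is the energy of the two-dimensional ground state $U$. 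The factor $r$ in $\mathcal{M}$ comes from the Jacobian $2\pi r$ of the circular integration and the exponent $2/(p-1)$ of $\tilde V(0,r)$ is the usual NLS scaling in dimension two; the nonlocal Poisson contribution is of higher order ($O(\varepsilon^{3})$) and therefore does not affect the leading profile.

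To promote this reduction to an actual critical point at the level $2\pi c_0 \varepsilon^{2}\mathcal{M}(r^{*})$, I would run a penalization scheme in the spirit of del Pino and Felmer adapted to the Schr\"odinger--Poisson setting: modify the nonlinearity outside a small tubular neighborhood of the circle of radius $r^{*}$ so that the penalized functional satisfies the Palais--Smale condition on the cylindrically symmetric space. The assumption that $\mathcal{M}$ has an isolated local minimum at $r^{*}$ with $\mathcal{M}(r^{*})>0$ gives the strict inequality $\mathcal{M}(r^{*})<\mathcal{M}(r)$ for $r$ close to $r^{*}$ with $r\neq r^{*}$, providing the topological separation needed to construct a local min-max level precisely at $2\pi c_0 \varepsilon^{2}\mathcal{M}(r^{*})+o(\varepsilon^{2})$. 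Uniform $L^{\infty}$ bounds together with exponential decay of the concentrated profile would then show that outside the tubular neighborhood the candidate solution is too small to activate the penalization, so one recovers a solution of the original system concentrating on the desired circle.

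The main obstacles I anticipate are: (i) controlling the nonlocal Poisson interaction $\rho\phi_u u^{2}$ on concentrated profiles and checking that it is genuinely subcritical with respect to the concentration scale, a subtle point because $\rho$ and $V$ may be unbounded and may vanish at infinity; (ii) setting up the correct weighted Sobolev framework in which the cylindrically symmetric variational problem is well posed and in which the penalization restores compactness for Palais--Smale sequences; and (iii) performing the fine concentration analysis required to ensure that the selected critical point really peaks on the circle of radius $r^{*}$ rather than on a nearby competitor, which is exactly where the \emph{isolated} local minimum assumption on $\mathcal{M}$ must be used to rule out drift of the concentration set.
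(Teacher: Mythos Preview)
Your proposal is correct and follows essentially the same strategy as the paper: Theorem~\ref{particularcase} is a special case (with $K\equiv\rho\equiv1$) of the main result, proved by working in the cylindrically symmetric subspace via symmetric criticality, applying a del Pino--Felmer type penalization outside a neighborhood $\Lambda$ of the target circle, obtaining a mountain-pass critical point of the penalized functional, and then using the upper/lower energy estimates (in which the Poisson term is indeed $O(\varepsilon^3)=o(\varepsilon^2)$) together with decay bounds to show the solution concentrates on the circle of radius $r^*$ and solves the original system. Your anticipated obstacles are precisely the technical points the paper addresses: the Ruiz-type space $X$ handles (ii), and the two-sided estimate $c_\varepsilon = \pi\varepsilon^2\inf_{\Lambda\cap\pi}\mathcal{M}+o(\varepsilon^2)$ handles (iii).
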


We point out that we have no assumption about the decay of $V$ at infinity. In particular, $V$ could be compactly supported. This is an improvement on previous works, see e.g. \cite{Ambrosetti}.

A fundamental physical problem arises from the correspondence principle, according to which quantum mechanics contains classical mechanics as $\hbar\rightarrow 0.$ In the framework of the Schr\"odinger equation with Coulomb potential one can construct solutions which are localized around classical Keplerian elliptic orbits by superposition of states of minimal quantum fluctuation (coherent states), see \cite{GayDelandeBommier, NAU}. Due to the dispersive nature of the Schr\"odinger equation, a rigorous reduction to classical mechanics cannot in general be performed. By introducing a local nonlinear homogeneous term $u^p,$ in \cite{BenciDaprile}, the authors prove the existence of solutions for the $2D$ nonlinear Schr\"odinger equation with radial potential, concentrating on a circle. In the case of radial potentials, due to the invariance by rotations, the classical and quantum angular momentum are conserved as indicated by Noether's Theorem. This suggests that the solutions concentrating on Keplerian orbits are suitable candidates in order to mimic, in the semi-classical limit, the classical dynamics described by Newton's equations. In \cite{Daprile}, the existence of solutions concentrating on circles has been obtained for the $3D$ nonlinear Schr\"odinger equation with cylindrically symmetric potential. In both  \cite{BenciDaprile,Daprile} the underlying idea is to find solutions with nonzero angular momentum. By a different method in \cite{BVS} and \cite{BDCVS} the existence of solutions concentrating on points and, respectively, on $k-$spheres has been obtained for the nonlinear Schr\"odinger equation. In particular, in \cite{BDCVS} the existence of solutions concentrating on a circle has been obtained when radial symmetry occurs, as in Theorem \ref{particularcase}. Our aim is to extend \cite{BVS,BDCVS} to the nonlinear Schr\"odinger-Poisson system.

Now we describe our assumptions.

\subsection{The potentials}\label{hyp:pot}
We consider a nonnegative potential $V \in C(\R^3 \backslash \left\{ 0 \right\})$, a nonnegative competing function
$K \in C(\R^3 \backslash \left\{ 0 \right\})$, $K \not\equiv 0$, and a weight $\rho \in L^{3/2}_{\text{loc}}(\R^3)\cap
L^{\infty}_{\text{loc}}\left(\R^3\setminus \left\{ 0 \right\}\right)$.
We assume that for every $R \in \mathbf{O}(3)$ such that $R(d)=d$, we have $V \circ R = V$, $K
\circ R = K$ and $\rho \circ R = \rho$. This will be the case if for example $V$, $K$ and $\rho$ are radial functions.

\subsection{The nonlinearity}
We consider, for simplicity, a homogeneous nonlinear term $u^{p}$ with $3<p<\infty$.
The condition $p>3$ will be needed in order to ensure the boundedness of Palais-Smale sequences.

\subsection{The growth conditions}
Let
\begin{align*}
W(x) := V(x) + \frac{\rho(x)}{ 1+\abs{x} }.
\end{align*}
Following \cite{BVS,MVS} we impose one of the three sets of growth conditions at infinity :
\begin{itemize}
\item[$(\mathcal{G}_{\infty}^1)$] there exists $\sigma < p-3$ such that
\[
  \limsup_{\abs{x} \to \infty} \frac{K(x)}{\abs{x}^\sigma} < \infty;
\]
\item[$(\mathcal{G}_{\infty}^2)$] there exists $\sigma \in \R$ such that
 \begin{align*}
  \liminf_{\abs{x} \rightarrow \infty} W(x) \abs{x}^{2} &> 0\ &&\text{ and } &  \limsup_{\abs{x} \to \infty} \frac{K(x)}{\abs{x}^\sigma} &<\infty;
 \end{align*}
\item[$(\mathcal{G}_{\infty}^3)$] there exist $\alpha < 2$ and $\sigma \in \R$ such that
 \begin{align*}
  \liminf_{\abs{x} \rightarrow \infty} W(x) \abs{x}^{\alpha} &> 0\ & & \text{ and }\ &\limsup_{\abs{x} \to \infty} \frac{K(x)}{\exp ( \sigma
\abs{x}^{\frac{2-\alpha}{2}} )} &< \infty.
 \end{align*}
\end{itemize}
Note that in comparison with \cite{BVS}, in $(\mathcal{G}_{\infty}^2)$ and $(\mathcal{G}_{\infty}^3)$, $V$ might vanish somewhere.
We also impose one of the three sets of growth conditions at the origin, which mirror those at infinity :
\begin{itemize}
\item[$(\mathcal{G}_{0}^1)$] there exists $\tau > -2$, such that
\[
   \limsup_{\abs{x} \to 0} \frac{K(x)}{\abs{x}^{\tau}} < \infty,
\]
\item[$(\mathcal{G}_{0}^2)$] there exists $\tau \in \R$ such that
 \begin{align*}
  \liminf_{\abs{x} \rightarrow 0} V(x) \abs{x}^{2} &> 0\ &&\text{ and }\ &\limsup_{\abs{x} \to 0} \frac{K(x)}{\abs{x}^{\tau}} < \infty;
 \end{align*}
\item[$(\mathcal{G}_{0}^3)$]
 there exist $\gamma > 2$ and $\tau \in \R$ such that
 \begin{align*}
  \liminf_{\abs{x} \rightarrow 0} V(x) \abs{x}^{\gamma} &> 0\ &&\text{ and }\ & \limsup_{\abs{x} \to 0} \frac{K(x)}{\exp ( \tau
\abs{x}^{-\frac{\gamma-2}{2}} )} &< \infty.
 \end{align*}
\end{itemize}

\subsection{The auxiliary potential}
Before we can state our last assumption, we need a few preliminaries. Let $a,b > 0$. We consider the \textit{limit
equation}
\begin{align}\label{plim}
	 -\Delta u + au = bu^p \hspace{1cm} \text{in} \ \R^{2}.
\end{align}
 The weak solutions of \eqref{plim} are critical
points of the functional $\mathcal{I}_{a,b} : H^1(\R^{2}) \rightarrow \R$ defined by
\begin{align}\label{defIab}
 \mathcal{I}_{a,b}(u) := \frac{1}{2} \int_{\R^{2}} \left(  \abs{\nabla u}^2 + a u^2 \right)\: dx - \frac{b}{p+1}
\int_{\R^{2}} u^{p+1}\: dx.
\end{align}
Any nontrivial critical point  $u \in H^1(\R^{2})$ of $\mathcal{I}_{a,b}$, belongs to
the Nehari manifold
\begin{align*}
 \mathcal{N}_{a,b} := \left\{ u \in H^1(\R^{2}) \ \vert\ u \not\equiv 0\ \text{and}\ \langle \mathcal{I}_{a,b}'(u),u
\rangle = 0 \right\}.
\end{align*}
A solution $u \in H^1(\R^{2})$ is a \textit{least-energy solution} of \eqref{plim} if
\begin{align*}
 \mathcal{I}_{a,b}(u) = \inf_{v \in \mathcal{N}_{a,b}} \mathcal{I}_{a,b}(v).
\end{align*}
The \textit{ground-energy function} is defined by
\begin{align*}
 \mathcal{E} : \R^+\times \R^+ \rightarrow \R^+ : (a,b) \mapsto \mathcal{E}(a,b) := \inf_{u \in \mathcal{N}_{a,b}}
\mathcal{I}_{a,b}(u).
\end{align*}
It is standard to show that
\begin{align}\label{EabMP}
 \mathcal{E}(a,b) = \inf_{\gamma \in \Gamma_{a,b}} \max_{t \in [0,1]} \mathcal{I}_{a,b} (\gamma(t)),
\end{align}
where
\begin{align*}
 \Gamma_{a,b} := \left\{ \gamma \in C([0,1],H^1(\R^2)) \ \vert\ \gamma(0)=0,\ \mathcal{I}_{a,b} (\gamma(1)) < 0
\right\}.
\end{align*}
The \textit{auxiliary potential} $\mathcal{M} :\ \R^{3} \rightarrow (0,+\infty]$ is defined by
\[
 (x',x'') \mapsto \mathcal{M}(x',x'') := \left\{ \begin{array}{ll} \abs{x''} \mathcal{E}\left(V(x),K(x)\right) &
\text{if}\ K(x) > 0, \\
 +\infty & \text{if}\ K(x)=0.
 \end{array}
 \right.
\]
The following lemma states some properties of the ground-energy function, see \cite[Lemma 3]{BVS}.
\begin{Lem}
 For every
$(a,b) \in \R^+_0 \times \R^+_0$, $\mathcal{E}(a,b)$ is a critical value of $\mathcal{I}_{a,b}$ and we have
\begin{align*}
 \mathcal{E}(a,b) = \inf_{\substack{u \in H^1(\R^2) \\ u \neq 0}} \max_{t \geq 0} \; \mathcal{I}_{a,b}(tu).
\end{align*}
If $u \in \mathcal{N}_{a,b}$ and $\mathcal{E}(a,b) = \mathcal{I}_{a,b}(u)$, then $u \in C^1(\R^2)$ and up to a
translation, $u$ is a radial function such that $\nabla u(x) \cdot x < 0$ for every $x \in \R^2\setminus \{0\}$.
Moreover, the following properties hold:
\begin{itemize}
 \item[(i)] $\mathcal{E}$ is continuous in $\R^+_0 \times \R^+_0$;
 \item[(ii)] for every $b^* \in \R^+_0$, $a \to \mathcal{E}(a,b^*)$ is strictly increasing;
 \item[(iii)] for every $a^* \in \R^+_0$, $b \to \mathcal{E}(a^*,b)$ is strictly decreasing;
 \item[(iv)] for every $\lambda > 0$, $\mathcal{E}(\lambda a, \lambda b) = \lambda^{-1/2} \mathcal{E}(a,b)$;
 \item[(v)] the ground-energy function satisfies
\begin{align*}
 \mathcal{E}(a,b) = \mathcal{E}(1,1) a^{\frac{p+1}{p-1}-1} b^{-\frac{2}{p-1}}.
\end{align*}
\end{itemize}
\end{Lem}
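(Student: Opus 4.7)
The plan is to identify $\mathcal{E}(a,b)$ with the mountain--pass value of $\mathcal{I}_{a,b}$, to produce a radial positive minimizer by symmetrization, and finally to extract properties (i)--(v) from elementary scaling and monotonicity. Since $p+1$ is Sobolev subcritical in dimension two, $\mathcal{I}_{a,b}$ has mountain--pass geometry, and the fibering $t \mapsto \mathcal{I}_{a,b}(tu)$ admits a unique positive maximizer $t_*(u)$ projecting $u$ onto $\mathcal{N}_{a,b}$. A direct computation yields the standard chain
\begin{equation*}
\inf_{u \in \mathcal{N}_{a,b}} \mathcal{I}_{a,b}(u) = \inf_{u \neq 0}\max_{t \geq 0}\mathcal{I}_{a,b}(tu) = \inf_{\gamma \in \Gamma_{a,b}}\max_{t \in [0,1]}\mathcal{I}_{a,b}(\gamma(t)),
\end{equation*}
which proves the $\inf\max$ representation and identifies $\mathcal{E}(a,b)$ as a mountain--pass level.

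For existence I would work in $H^1_{\mathrm{rad}}(\R^2)$, justified by Schwarz symmetrization, which preserves the $L^q$ norms and does not increase the Dirichlet energy; there the compact embedding $H^1_{\mathrm{rad}}(\R^2) \hookrightarrow L^{p+1}(\R^2)$ restores compactness. A Palais--Smale sequence $(u_n)$ at level $\mathcal{E}(a,b)$ is bounded because $p>1$, using the standard combination of $\mathcal{I}_{a,b}(u_n)$ with $\langle \mathcal{I}'_{a,b}(u_n), u_n\rangle/(p+1)$; it therefore admits, up to extraction, a strong limit that is a nontrivial positive critical point at the required energy. Standard elliptic bootstrap on $-\Delta u + au = bu^p$ gives $u \in C^{1,\alpha}_{\mathrm{loc}}$, and the moving--plane method of Gidas, Ni and Nirenberg, combined with the Hopf lemma, yields after a translation both the radial symmetry and the strict monotonicity $\nabla u(x)\cdot x < 0$ for $x \neq 0$.

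The algebraic properties follow from the scaling identity
\begin{equation*}
\mathcal{I}_{a,b}\bigl(\alpha v(\beta\,\cdot)\bigr) = \alpha^2\,\mathcal{I}_{a/\beta^2,\, b\alpha^{p-1}/\beta^2}(v),\qquad \alpha,\beta > 0,
\end{equation*}
a direct change of variables in $\R^2$. Setting $\beta^2 = a$ and $\alpha^{p-1} = a/b$ yields $\mathcal{I}_{a,b}(\alpha v(\beta\,\cdot)) = (a/b)^{2/(p-1)}\mathcal{I}_{1,1}(v)$, whence (v) by taking the $\inf\max$, and (iv) follows from the same identity. Property (i) is then read off the explicit formula in (v). For (ii) and (iii), on any fixed $u \not\equiv 0$ the map $a \mapsto \mathcal{I}_{a,b}(u)$ is strictly increasing while $b \mapsto \mathcal{I}_{a,b}(u)$ is strictly decreasing; this is preserved by $\max_{t \geq 0}$, and the infimum over $u$ transfers strict monotonicity to $\mathcal{E}$ because the outer infimum is attained at a nontrivial minimizer.

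The main obstacle is the lack of compactness on $\R^2$ due to translation invariance, resolved by Schwarz symmetrization and the Strauss compact embedding. A secondary technical point is to upgrade radial monotonicity of the minimizer to the strict inequality $\nabla u(x)\cdot x < 0$, which requires the full force of the moving--plane argument together with a strong maximum principle of Hopf type.
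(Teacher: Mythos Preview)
Your sketch is correct and follows the standard route for ground states of $-\Delta u + au = bu^p$: identification of the Nehari and mountain--pass levels via the fibering map, existence through Schwarz symmetrization and the Strauss compact embedding $H^1_{\mathrm{rad}}(\R^2)\hookrightarrow L^{p+1}(\R^2)$, elliptic regularity together with Gidas--Ni--Nirenberg for the radial symmetry and strict decrease, and the two-parameter scaling for (i)--(v). The paper itself gives no proof of this lemma but simply refers to \cite[Lemma~3]{BVS}, and your argument is precisely the standard one underlying that reference. One small remark: in the two-dimensional setting relevant here your scaling identity in fact yields $\mathcal{E}(\lambda a,\lambda b)=\mathcal{E}(a,b)$, which is consistent with (v); the exponent $-1/2$ appearing in (iv) seems to be transcribed from the three-dimensional formulation in \cite{BVS}, so the discrepancy is in the statement rather than in your computation.
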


The last property of the preceding lemma implies the following explicit form of the auxiliary potential:
\[
\label{defM2}
 \mathcal{M}(x',x'') = \mathcal{E}(1,1) \abs{x''} \left[V(x)\right]^{\frac{p+1}{p-1} - 1}
\left[K(x)\right]^{\frac{-2}{p-1}}.
\]

Due to the symmetry that we shall impose on the solution (see \eqref{def:spaceE}), the
concentration can only occur in the plane $\pi$. We assume that there exists a smooth
bounded open set $\Lambda \subset \R^{3}$ such that
\begin{equation}
\label{hypLambda1}
\Bar{\Lambda} \cap d = \emptyset, \ \Lambda \cap \pi \neq \emptyset,
\end{equation}
for every $R \in \mathbf{O}(3)$ such that $R(d)=d$,
\begin{align}
\label{hypLambda2}
  R(\Lambda)=\Lambda
\end{align}
and the following inequalities hold
\begin{align}
 0 < \inf_{\Lambda \cap \pi}\mathcal{M} < \inf_{\partial \Lambda \cap \pi}\mathcal{M}, \label{hypLambda3} \\
 \inf_{\Lambda \cap \pi}\mathcal{M} < 2 \inf_{\Lambda} \mathcal{M}. \label{hypLambda4}
\end{align}
By continuity of $\mathcal{M}$ in $\Lambda$, this last condition is not restrictive.
Similarly, we can also assume that $V > 0$ on $\overline{\Lambda}$ and that $\mathcal{M}$ is continuous on
$\overline{\Lambda}$.

Our main result is the following.
\begin{Thm}\label{Th:main}
Let $p>3$ and $V, K$ and $\rho$ be functions satisfying the assumptions in \ref{hyp:pot}. Assume that one set
$(\mathcal{G}_{\infty}^i)$ of growth
conditions at infinity and one set $(\mathcal{G}_{0}^j)$ of growth conditions at the origin hold.
Assume also that there exists an open bounded set $\Lambda \subset \R^{3}$ such that \eqref{hypLambda1},
\eqref{hypLambda2},
\eqref{hypLambda3} and \eqref{hypLambda4} hold. Then there exists $\varepsilon_0 > 0$ such that for every
$0 < \varepsilon < \varepsilon_0$, problem \eqref{NLSP} has at least one positive solution $u_{\varepsilon}$.
Moreover, for every $0 < \varepsilon < \varepsilon_0$, there exists $x_{\varepsilon} \in \Lambda \cap \pi$
such that $u_{\varepsilon}$ attains its maximum at $x_{\varepsilon}$,
\begin{equation*}
 \liminf_{\varepsilon \to 0} u_{\varepsilon}(x_{\varepsilon}) > 0, \quad
 \lim_{\varepsilon \to 0} \mathcal{M} (x_{\varepsilon}) = \inf_{\Lambda \cap \pi} \mathcal{M},
\end{equation*}
and there exist $C>0$ and $\lambda > 0$ such that
\begin{align*}
 u_{\varepsilon}(x) &\leq C \exp{\left( -\frac{\lambda}{\varepsilon} \frac{d(x,S^1_{\varepsilon})}{1+d(x,S^1_{\varepsilon})}\right) }
\left( 1+\abs{x}^2 \right)^{\frac{-1}{2}}, &  \forall x \in \R^3,
\end{align*}
where $S^1_{\varepsilon}$ is the circle centered at the origin, contained in the plane $\pi$ and of radius
$\abs{x''_{\varepsilon}}$.
\end{Thm}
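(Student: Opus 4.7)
The plan is to extend the Byeon--Van Schaftingen penalization scheme of \cite{BVS, BDCVS} (developed for the pure NLS with degenerate potentials and for concentration on spheres) to the Schr\"odinger--Poisson system, using the crucial fact that for $p>3$ the nonlocal term $\rho\phi_u u$ is, in the relevant scaling, a lower-order perturbation of the local nonlinearity. As a first step I solve $-\Delta\phi = \rho u^2$, obtain $\phi_u$ as a continuous function of $u$, and reduce \eqref{NLSP} to the single Euler--Lagrange equation of
\[
J_\varepsilon(u) = \tfrac12\int_{\R^3}\bigl(\varepsilon^2|\nabla u|^2 + Vu^2\bigr) + \tfrac14\int_{\R^3}\rho\phi_u u^2 - \tfrac{1}{p+1}\int_{\R^3}K(u^+)^{p+1}
\]
on a weighted Sobolev space $E_\varepsilon$ whose weights are designed, following \cite{BVS, MVS}, so that the first quadratic form is coercive and $E_\varepsilon$ embeds into the space associated to $K$ under each of the alternatives $(\mathcal{G}_\infty^i)$, $(\mathcal{G}_0^j)$. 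I then restrict $J_\varepsilon$ to the subspace $E_\varepsilon^{\mathrm{sym}}$ of functions invariant under rotations $R\in\mathbf{O}(3)$ fixing $d$; Palais' symmetric criticality theorem converts critical points there into genuine critical points of $J_\varepsilon$, and the symmetry forces any concentration to take place on full $\mathbf{O}(2)$-orbits, i.e.\ on circles contained in $\pi$.

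To recover compactness I penalize in the spirit of \cite{BVS}: outside $\Lambda$ I replace $K(u^+)^p$ by $\min\bigl(K(u^+)^p,\mu V u^+\bigr)$ for a small $\mu>0$, producing a modified functional $\tilde J_\varepsilon$ that satisfies the Palais--Smale condition below a threshold level. I then run a mountain pass in $E_\varepsilon^{\mathrm{sym}}$ along test paths $t\mapsto t\,\Phi_{\varepsilon, x_0}$, where $\Phi_{\varepsilon,x_0}(x) = \eta_\varepsilon(x)\,w\bigl((x-x_0)/\varepsilon\bigr)$ is a cylindrically symmetrized, cut-off rescaling of the $2$D least-energy solution $w$ of \eqref{plim} with $a=V(x_0)$, $b=K(x_0)$, and $x_0\in\Lambda\cap\pi$. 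A direct scaling computation yields
\[
\limsup_{\varepsilon\to 0}\varepsilon^{-2}\tilde c_\varepsilon \;\le\; 2\pi\inf_{\Lambda\cap\pi}\mathcal M,
\]
the essential point being that on such test functions $\int\rho\phi_u u^2$ is of order $\varepsilon^4|\log\varepsilon|$ --- the Newtonian potential of a mass concentrated on a one-dimensional curve has this scaling --- which is strictly subleading with respect to the $\varepsilon^2$ of the local part.

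The matching lower bound, which is the heart of the argument, comes from concentration analysis. Given a critical point $u_\varepsilon$ of $\tilde J_\varepsilon|_{E_\varepsilon^{\mathrm{sym}}}$ at level $\tilde c_\varepsilon$, I extract a finite family of local maxima $x_\varepsilon^i$ and study the rescaled profiles $v_\varepsilon^i(y) := u_\varepsilon(x_\varepsilon^i + \varepsilon y)$ on cross-sections transverse to the corresponding circles. Each $v_\varepsilon^i$ converges, up to a subsequence, to a nontrivial solution of the $2$D limit equation with coefficients $V(x_\infty^i)$, $K(x_\infty^i)$, so its contribution to $\varepsilon^{-2}J_\varepsilon(u_\varepsilon)$ is at least $2\pi|x_\infty^{i\prime\prime}|\,\mathcal E(V(x_\infty^i),K(x_\infty^i)) = 2\pi\mathcal M(x_\infty^i)$. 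Assumption \eqref{hypLambda4} forces exactly one profile to survive, while \eqref{hypLambda3} together with the upper bound forces its limit center $x_\infty$ to belong to $\Lambda\cap\pi$ with $\mathcal M(x_\infty)=\inf_{\Lambda\cap\pi}\mathcal M$. Finally, an exponential decay estimate is produced by constructing a supersolution for $-\varepsilon^2\Delta + cV$ outside an $\varepsilon$-neighborhood of $S^1_\varepsilon$: it gives the pointwise bound stated in the theorem and, as a by-product, shows that $u_\varepsilon$ is so small on $\R^3\setminus\Lambda$ that the penalization is never activated, so that $u_\varepsilon$ solves the original system.

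The main obstacle, in comparison with the pure NLS treatment of \cite{BVS, BDCVS}, is the control of the nonlocal quartic term $\int\rho\phi_u u^2$ at every step: in the upper bound for $\tilde c_\varepsilon$; in the Palais--Smale analysis, where $p>3$ is essential to make the Nehari manifold smooth and bounded away from zero in the presence of the additional quartic term; and in the concentration analysis, where one must rule out cross-interactions between distinct hypothetical concentration circles that would break the additivity of the limit energies. A secondary difficulty is the uniform treatment of the three alternative growth regimes $(\mathcal{G}_{\infty,0}^{1,2,3})$, which requires tailoring both the weighted space $E_\varepsilon$ and the decay supersolution to each case, in the spirit of \cite{MVS}. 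Both difficulties are handled by systematically exploiting the subleading scaling of the nonlocal term together with the symmetry reduction to a $2$D problem on the cross-section.
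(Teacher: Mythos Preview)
Your overall architecture---penalize outside $\Lambda$, mountain pass in the symmetric subspace via Palais' principle, matching $\varepsilon^2$ upper and lower energy bounds, single-circle concentration forced by \eqref{hypLambda3}--\eqref{hypLambda4}, and a comparison argument to undo the penalization---coincides with the paper's. The discrepancies in the leading constant ($2\pi$ versus the paper's $\pi$) and in the order you quote for the Poisson energy on the test path ($\varepsilon^4\abs{\log\varepsilon}$ versus the paper's cruder $\varepsilon^3$ via Hardy--Littlewood--Sobolev) are inessential.

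There is, however, a real gap in how you treat potentials $V$ that vanish at infinity, a case the theorem explicitly allows (even $V$ compactly supported). Your penalization $\min\bigl(K(u^+)^p,\mu V u^+\bigr)$ and your barrier for $-\varepsilon^2\Delta+cV$ both collapse where $V=0$: to remove the penalization you would then need $K u_\varepsilon^{p-1}\le\mu V=0$, which is impossible unless $K$ vanishes too. The paper repairs this in two ways you omit. First, the truncation outside $\Lambda$ is against $\varepsilon^2 H(x)+\mu V(x)$, where $H(x)=\kappa\,\abs{x}^{-2}\bigl((\log\abs{x})^2+1\bigr)^{-(1+\beta)/2}$ is a Hardy-subcritical weight; positivity of $-\Delta-H$ (Hardy's inequality) gives a comparison principle and a minimal supersolution $\Psi_\varepsilon$ with the universal decay $(1+\abs{x})^{-1}$, and it is $\varepsilon^2 H$, not $\mu V$, that one ultimately checks dominates $K u_\varepsilon^{p-1}$ under $(\mathcal G_\infty^1)$. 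Second, in the barrier construction the effective linear potential is $W_\varepsilon=(1-\mu)V+C_\varepsilon\rho/(C'_\varepsilon+\abs{x})$, obtained from the lower bound $\phi_{u_\varepsilon}(x)\ge C_\varepsilon/(C'_\varepsilon+\abs{x})$; this is precisely why the hypotheses $(\mathcal G_\infty^{2})$ and $(\mathcal G_\infty^{3})$ are stated for $W=V+\rho/(1+\abs{x})$ rather than for $V$. So the nonlocal term is \emph{not} a uniformly lower-order perturbation throughout the argument: in the decay step it can supply the coercivity that $V$ lacks. A secondary difference is the function space: the paper works in the Ruiz-type space $X$ whose norm already contains the Coulomb energy, and obtains boundedness of Palais--Smale sequences directly from $(p+1)J_\varepsilon(u_n)-\langle J'_\varepsilon(u_n),u_n\rangle$, which produces the coefficient $\tfrac{p-3}{4}>0$ in front of $\int\rho\phi_{u_n}u_n^2$; no Nehari manifold is used.
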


In Section $2$, we deal with an auxiliary penalized problem. This by now classical penalization argument goes back to del Pino and Felmer \cite{DF96}. The method has then been adapted in \cite{BVS,MVS} in the frame of vanishing or compactly supported potentials. Section $3$ is devoted to the asymptotic analysis of the solutions of the penalized problem while in Section $4$, we show how to go back to the original problem. At last, we give some final comments in Section $5$.
Throughout the paper, we use the following notation :
\begin{itemize}
\item[-] $\DR^{1,2}(\R^3)$ is the space
$$
\{u\in L^{2^*}(\R^3):\nabla u\in L^2(\R^3;\R^3)\}$$
equipped with the norm
$$
||u||_{\DR^{1,2}(\R^3)} :=||\nabla u||_{L^2(\R^3)} ;
$$
\item[-] $L^q_Q(\R^3)$ is the Lebesgue space of measurable functions such that
$$\int_{\R^3}Q(x)|u|^q dx<\infty.$$
\end{itemize}
As usual, $u_+:=\max(u,0)$ and $u_-:=\max(-u,0)$, $B_R$ is the open ball of radius $R$ and
$c_1,c_2,...c_j,C_1,C_2,...C_k$ always denote positive real constants.


\section{Existence for the penalized problem}
Following \cite{MVS} and \cite{BDCVS}, we define the penalization potential $H : \R^3 \to \R$ by
\begin{align*}
 H(x) := \frac{\kappa}{\abs{x}^2 \Bigl( \bigl(\log \abs{x} \bigr)^2+1 \Bigr)^{\frac{1+\beta}{2}}}
\end{align*}
where $\beta > 0$ and $0 < \kappa < \frac{1}{4}$.
Notice that for all $x \in \R^3$, we have
\[
 H(x) \leq \frac{\kappa}{\abs{x}^2}.
\]
By Hardy's inequality, we deduce that the quadratic form associated to $- \Delta - H$ is positive, i.e.
\begin{align}\label{positivity}
 \int_{\R^3} \left( \abs{\nabla u}^2 - H u^2 \right) \geq \biggl(  \frac{1}{4} - \kappa \biggr)
\int_{\R^3} \frac{\abs{u(x)}^2}{\abs{x}^2}\: dx \geq 0,
\end{align}
for all $u \in \mathcal{D}^{1,2}(\R^3)$.

This inequality implies the following comparison principle.
\begin{Prop}\label{Th:comp}
Let $\Omega \subset \R^3 \setminus\{0\}$ be a smooth domain. Let $v,w \in H^1_{\textnormal{loc}}(\Omega)\cap
C(\overline{\Omega})$ be such
that $\nabla (w-v)_- \in L^2(\Omega)$, $(w-v)_-/\abs{x} \in L^2(\Omega)$ and
\begin{align}\label{comp}
 - \Delta w - H w \geq - \Delta v - H v, \hspace{1cm} \forall x \in \Omega.
\end{align}
If $\partial\Omega \neq \emptyset$, assume also that $w \geq v$ on $\partial\Omega$. Then $w \geq v$ in $\Omega$.
\end{Prop}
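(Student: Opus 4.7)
The plan is to test the weak differential inequality against the negative part $\varphi := (w-v)_-$ and then absorb the resulting quadratic form using the Hardy-type positivity~\eqref{positivity} to force $\varphi \equiv 0$.

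First I would set $u := w-v$, so that $u \in H^1_{\textnormal{loc}}(\Omega)\cap C(\overline{\Omega})$ satisfies $-\Delta u - Hu \geq 0$ weakly in $\Omega$ and $u \geq 0$ on $\partial\Omega$ (when $\partial\Omega \neq \emptyset$). The function $\varphi := u_-$ then vanishes on $\partial\Omega$, its extension $\tilde\varphi$ by zero outside $\Omega$ has $\nabla \tilde\varphi \in L^2(\R^3)$ by hypothesis, and the Sobolev inequality on $\R^3$ yields $\tilde\varphi \in L^{2^*}(\R^3)$; hence $\tilde\varphi \in \mathcal{D}^{1,2}(\R^3)$ and is supported in $\overline{\Omega}$.

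The central step is to legitimize $\varphi$ as a test function in the weak inequality $\int_\Omega (\nabla u \cdot \nabla\psi - Hu\psi)\,dx \geq 0$. To this end I would regularize by cut-offs: pick $\eta_n \in C_c^\infty(\R^3 \setminus \{0\})$ with $0 \leq \eta_n \leq 1$, vanishing near $\partial\Omega$, near the origin and near infinity, and with $\eta_n \to 1$. Each $\eta_n \varphi$ is then an admissible compactly supported test function, and passing to the limit is justified by dominated convergence: the assumption $\nabla u_- \in L^2(\Omega)$ controls the gradient terms, while the pointwise bound $0 \leq H(x) \leq \kappa/|x|^2$ together with $u_-/|x| \in L^2(\Omega)$ controls the zero-order term $H u \varphi$. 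Using the chain rule $\nabla u \cdot \nabla u_- = -|\nabla u_-|^2$ and the identity $u\,u_- = -u_-^2$ almost everywhere, the inequality becomes
\begin{align*}
\int_\Omega \bigl(|\nabla u_-|^2 - H u_-^2\bigr)\,dx \leq 0.
\end{align*}

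Finally I would apply \eqref{positivity} to $\tilde\varphi \in \mathcal{D}^{1,2}(\R^3)$ to conclude
\begin{align*}
0 \leq \Bigl(\tfrac{1}{4}-\kappa\Bigr)\int_{\R^3}\frac{u_-(x)^2}{|x|^2}\,dx \leq \int_\Omega \bigl(|\nabla u_-|^2 - H u_-^2\bigr)\,dx \leq 0,
\end{align*}
so that $u_- \equiv 0$ almost everywhere, hence everywhere by continuity of $u$ on $\overline\Omega$; this gives $w \geq v$ in $\Omega$. The main obstacle I anticipate is precisely the approximation step admitting $u_-$ as a test function: $u$ is only $H^1_{\textnormal{loc}}(\Omega)$, $\Omega$ need be neither bounded nor bounded away from the origin, and the zero-order coefficient $H$ is singular at $0$. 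The choice of $H$ with the Hardy-type singularity $\kappa/|x|^2$ and the integrability hypotheses $\nabla u_- \in L^2(\Omega)$, $u_-/|x| \in L^2(\Omega)$ are exactly tuned so that this approximation can be pushed through.
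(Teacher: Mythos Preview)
Your proposal is correct and follows exactly the approach the paper takes: multiply the inequality by $(w-v)_-$, integrate by parts, and invoke the positivity estimate~\eqref{positivity}. You have simply supplied the approximation details (cut-offs and dominated convergence) that the paper leaves implicit.
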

\begin{proof}
 It suffices to multiply the inequality \eqref{comp} by $(w-v)_-$, integrate by parts and use \eqref{positivity}.
\end{proof}


Fix $\mu \in (0,1)$. We define the penalized nonlinearity $g_{\varepsilon}: \R^3 \times \R \rightarrow \R$ by
\[
  g_{\varepsilon}(x,s) := \chi_{\Lambda}(x) K(x) s_+^p + \left( 1-\chi_{\Lambda}(x) \right) \min\left\lbrace  \left(
\varepsilon^2 H(x)+\mu V(x)\right) s_+, K(x)s_+^p \right\rbrace.
\]
Let $G_{\varepsilon}(x,s) := \int_0^s g_{\varepsilon}(x,\sigma) d\sigma$. One can check that $g_{\varepsilon}$ is a
Carath\'eodory function with the following properties :
\begin{itemize}
 \item[($g_1$)] $g_{\varepsilon}(x,s) = o(s),\ s \rightarrow 0^+$, uniformly in compact subsets of $\R^3$.
 \item[($g_2$)] there exists $p>3$ such that
 \begin{align*}
  \lim_{s \rightarrow \infty} \frac{g_{\varepsilon}(x,s)}{s^p} = 0,
 \end{align*}
 \item[($g_3$)] there exists $2 < \theta \leq p+1$ such that
 \begin{align*}
 \begin{array}{ll}
  0 < \theta G_{\varepsilon}(x,s) \leq g_{\varepsilon}(x,s)s &\forall x \in \Lambda,\ \forall s>0, \\
  0 < 2 G_{\varepsilon}(x,s) \leq g_{\varepsilon}(x,s)s \leq \left( \varepsilon^2 H(x)+\mu V(x)\right) s^2 &\forall x
\notin \Lambda,\ \forall s>0,
 \end{array}
 \end{align*}
 \item[($g_4$)] the function
 \begin{align*}
 s \mapsto \frac{g_{\varepsilon}(x,s)}{s}
 \end{align*}
 is nondecreasing for all $x \in \R^3$.
\end{itemize}

Now we use Critical Point Theory in order to find solutions to the penalized problem
\begin{equation} \label{NLSPropen}
\left\{
\begin{array}{l}
-\varepsilon^2 \Delta u+V(x)u+\rho(x)\phi u=g_{\varepsilon}(x,u), \,\,\,\,\,\, x \in {\mathbb R}^{3},  \\
  \\
-\Delta \phi=\rho(x)u^{2}.
\end{array}
\right.
\end{equation}


For any $u^{2} \rho \in L^{1}_{loc}(\mathbb{R}^{3})$ such that
$$\int_{\mathbb{R}^{3}}\int_{\mathbb{R}^{3}}\frac{u^2(x)u^2(y)\rho(x)\rho(y)}{|x-y|}\: dx\: dy<\infty,$$
the standard distributional solution
\begin{align}\label{formula:phi}
\phi_u:=\frac{1}{4\pi |x|} \star u^2 \rho
\end{align}
belongs to
$\mathcal{D}^{1,2}(\mathbb{R}^{3})$ and is a weak solution in $\mathcal{D}^{1,2}$ (e.g. \cite{RS}).
Since we consider $u \in \mathcal{D}^{1,2}$, we will assume $\rho \in L^{3/2}_{loc}$.
%
A suitable choice of the space $X$ of admissible functions is given in the work of Ruiz \cite{RS}, in the case of
$V\equiv0$ and $\rho\equiv 1.$  Inspired by \cite{RS}, we define, for measurable $V,\rho\geq 0,$
$$\|u\|^2_{X}:=\int_{\mathbb{R}^{3}}|\nabla
u|^2+V(x)u^2\: dx+\Big(\int_{\mathbb{R}^{3}}\int_{\mathbb{R}^{3}}\frac{u^2(x)u^2(y)\rho(x)\rho(y)}{|x-y|}\: dx\: dy\Big)^{1/2}$$
and
\begin{align}
X:=\{u\in  \mathcal{D}^{1,2}(\mathbb{R}^{3})\,:\, \|u\|_{X}<\infty\}.\label{def:X}
\end{align}
As pointed out in \cite{RS}, the space
$X$ is a uniformly convex Banach space, hence it is reflexive.
Precisely, we look for solutions $(u,\phi_u)\in E\times \mathcal{D}^{1,2}(\mathbb{R}^{3}).$

We also define $H_{V,\varepsilon}$ to be the closure of $\mathcal D(\R^3)$ with respect to the norm
$$\|u\|^2_{H_{V,\varepsilon}}:=\int_{\mathbb{R}^{3}}\varepsilon^2|\nabla u|^2+V(x)u^2\: dx.$$
We will focus on the closed subspace $E\subset X$ of functions which are radial in $\pi$, namely
\begin{align}\label{def:spaceE}
 E := \bigl\{ u \in X\ \vert\ \forall R \in \mathbf{O}(3) \ \text{s.t.}\ R({d}) = {d}, u \circ R = u \bigr\}.
\end{align}
Solutions of (\ref{NLSPropen}) are the critical points of the functional $$ J_\varepsilon (u):=\frac{1}{2}\int_{{\mathbb
{R}}^{3}}(\varepsilon^2 \left|\nabla u\right|^{2}+V(x)u^{2})dx+\frac{1}{4}\int_{{\mathbb
{R}}^{3}}\phi_{u}u^{2}\rho(x)dx-
\int_{{\mathbb {R}}^{3}}G_\varepsilon (x,u) dx,$$
which is $C^1(X; \mathbb R)$.

In the present section we find critical points for $J_\varepsilon$ through a minimax scheme used in \cite{BoMerc},
modeled on \cite{AmbrosettiRa}.

The main result of this section is
\begin{Thm}\label{expena}
Assume $V,K,\rho$ satisfy the assumptions of Theorem \ref{Th:main}. Then, for any $\varepsilon>0$ and
$\kappa,\mu>0$ small enough, there exists a critical point for $J_\varepsilon$ at level
\begin{align}\label{ceps}
c_\varepsilon:=\inf_{\gamma \in \Gamma}\max_{t\in[0,1]}J_\varepsilon(\gamma(t)),
\end{align}
where
\begin{align}\label{Gammaeps}
\Gamma_{\varepsilon} := \left\{\gamma \in C([0,1],E): \gamma(0)=0, J_\varepsilon(\gamma(1))<0\right\},
\end{align}
corresponding to a nontrivial solution $(u,\phi)\in E \times \mathcal D^{1,2}(\mathbb R^3)$ for (\ref{NLSPropen}).
Moreover $u$ is positive.
\end{Thm}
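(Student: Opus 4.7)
My plan is to apply a mountain pass theorem to the restriction of $J_\varepsilon$ to the closed symmetric subspace $E$ and then to invoke the principle of symmetric criticality of Palais to pass to a critical point of $J_\varepsilon$ on $X$. Positivity of the resulting solution will follow from the fact that $g_\varepsilon(x,\cdot)$ depends only on $s_+$ together with the comparison principle of Proposition~\ref{Th:comp}.

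\textbf{Mountain pass geometry.} First I verify the two geometric hypotheses. Clearly $J_\varepsilon(0)=0$. Near the origin, the growth properties $(g_1)$--$(g_2)$ and the penalization bound $g_\varepsilon(x,s) \leq (\varepsilon^2 H(x)+\mu V(x))s$ valid on $\Lambda^c$ yield, after absorbing $\varepsilon^2 \int H u^2$ into the Dirichlet energy via the Hardy-type inequality \eqref{positivity} (possible since $\kappa < 1/4$), constants $\alpha, r > 0$ such that $J_\varepsilon(u) \geq \alpha$ whenever $\|u\|_X = r$; the nonnegative quartic Poisson term only helps. For the far end I choose a nontrivial nonnegative $u_0 \in E$ with $\supp u_0 \cap \Lambda \cap \pi \neq \emptyset$; condition $(g_3)$ on $\Lambda$ gives $G_\varepsilon(x,s) \geq c\, s^\theta$ for large $s$ with $\theta \in (2,p+1]$, and the choice $\theta=p+1>4$ (available since $p>3$) makes the $t^{p+1}$ contribution dominate both the quadratic part and the $t^4$ Poisson piece, so $J_\varepsilon(t u_0) \to -\infty$ as $t \to \infty$. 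Hence $\Gamma_\varepsilon \neq \emptyset$ and $c_\varepsilon \geq \alpha > 0$.

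\textbf{Boundedness and compactness of Palais--Smale sequences.} For a PS sequence $(u_n) \subset E$ at level $c_\varepsilon$, with $\theta = p+1 > 4$ one computes
\begin{align*}
J_\varepsilon(u_n) - \tfrac{1}{\theta}\langle J_\varepsilon'(u_n), u_n\rangle &= \bigl(\tfrac{1}{2}-\tfrac{1}{\theta}\bigr)\|u_n\|_{H_{V,\varepsilon}}^2 + \bigl(\tfrac{1}{4}-\tfrac{1}{\theta}\bigr)\int \phi_{u_n} u_n^2 \rho \\
&\quad + \int \Bigl(\tfrac{1}{\theta} g_\varepsilon(x,u_n) u_n - G_\varepsilon(x,u_n)\Bigr)\, dx.
\end{align*}
The last integrand is nonnegative on $\Lambda$ by $(g_3)$, while on $\Lambda^c$ its negative part is bounded below by $-(\tfrac{1}{2}-\tfrac{1}{\theta})\int_{\Lambda^c}(\varepsilon^2 H + \mu V) u_n^2$, which is absorbed by the first summand via \eqref{positivity} provided $\mu$ is taken small. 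Because $\tfrac{1}{4}-\tfrac{1}{\theta} > 0$, this simultaneously controls $\|u_n\|_{H_{V,\varepsilon}}$ and the Poisson quartic form, so $(u_n)$ is bounded in $X$. For the strong convergence of a weakly convergent subsequence I would combine the continuity of $u \mapsto \phi_u$ from $X$ into $\mathcal D^{1,2}$ (see \cite{RS}), compact embeddings of $E$ into suitable weighted Lebesgue spaces that follow from the cylindrical symmetry built into $E$ together with the growth conditions $(\mathcal{G}_\infty^i)$ and $(\mathcal{G}_0^j)$, and the linear-type control of $g_\varepsilon$ outside $\Lambda$ which keeps the tail harmless.

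\textbf{Conclusion and main obstacle.} The mountain pass theorem then produces a critical point of $J_\varepsilon|_E$ at level $c_\varepsilon$, and by symmetric criticality it is a critical point on $X$; the associated $\phi$ is given by \eqref{formula:phi}. Replacing $u$ by $|u|$ leaves $J_\varepsilon$ unchanged (since $\phi_u$ depends on $u^2$ and $g_\varepsilon$ only on $u_+$), so we may assume $u \geq 0$, and Proposition~\ref{Th:comp} then yields $u > 0$ in $\R^3 \setminus \{0\}$. The hard part is the verification of the Palais--Smale condition: the nonlocal quartic Poisson term has the ``wrong'' homogeneity to be handled by the quadratic leading order alone, which is precisely why one genuinely needs $p+1 > 4$ in the Ambrosetti--Rabinowitz computation; moreover, the possibly degenerate or unbounded behaviour of $V$ and $\rho$ at infinity and at $0$ must be reconciled with the nonlocal interaction, which is achieved by a careful combination of the penalization, the Hardy inequality \eqref{positivity}, and the cylindrical symmetry encoded in $E$.
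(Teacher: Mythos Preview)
Your overall strategy (mountain pass in $E$, boundedness via the Ambrosetti--Rabinowitz combination with $\theta=p+1>4$, then symmetric criticality) matches the paper. However two points deserve comment.

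\textbf{Compactness of Palais--Smale sequences.} Your sketch attributes the compactness to ``compact embeddings of $E$ into suitable weighted Lebesgue spaces that follow from the cylindrical symmetry \ldots together with the growth conditions $(\mathcal G_\infty^i)$ and $(\mathcal G_0^j)$''. This is not the mechanism the paper uses, and in fact the growth conditions play no role in the proof of Theorem~\ref{expena}; they enter only later, in Section~\ref{sect:init}, to undo the penalization. Since $V$ may be compactly supported, Strauss-type embeddings driven by symmetry alone do not suffice either. The paper instead exploits the penalization directly: outside $\Lambda$ one has $g_\varepsilon(x,s)s\le(\varepsilon^2 H+\mu V)s^2$, and testing $\langle J_\varepsilon'(u_n),u_n\eta_R^2\rangle$ with the logarithmic cut-off $\eta_R$ and using Hardy's inequality yields the uniform tail estimates of Lemma~\ref{complem}, while Lemma~\ref{sing2} handles the origin. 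Local compactness (Rellich) then closes the argument. A distinctive extra ingredient in the paper is the selection, via the general minimax principle \cite[p.~41]{MWWW}, of a Palais--Smale sequence lying near the positive cone (Lemma~\ref{MMMPPP}); this is not merely cosmetic, since the proof of Lemma~\ref{complem}\,(e) tests against $u\eta_R$ with $u\ge 0$ and needs the sign to bound the nonlinear term from above, and Lemma~\ref{negativepartSP} is used to kill the contribution of $(u_n)_-$ to the nonlocal term. Your sketch omits this device.

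\textbf{Positivity.} The sentence ``Replacing $u$ by $|u|$ leaves $J_\varepsilon$ unchanged \ldots so we may assume $u\ge 0$'' is not a valid step: equality of energies does not make $|u|$ a critical point. The paper obtains $u\ge 0$ as a by-product of the positive-cone Palais--Smale sequence and then applies the maximum principle. An alternative, shorter route available to you is to test the Euler--Lagrange equation with $-u_-$: since $g_\varepsilon(x,s)$ vanishes for $s\le 0$ and $\phi_u\ge 0$, one gets $\|u_-\|_{H_{V,\varepsilon}}^2+\int_{\R^3}\phi_u u_-^2\rho=0$, hence $u_-\equiv 0$; strict positivity then follows from the strong maximum principle (or Proposition~\ref{Th:comp}).
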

\begin{Rem}\label{Palaiscrit}
{ \it Due to the invariance of the Lebesgue measure by rotations, by the
symmetric criticality principle \cite{Palais}, if $u\in E$ is critical for $J_\varepsilon  |_E,$ then $u$ is also
critical for $J_\varepsilon |_X$}.
\end{Rem}

The functional $J_\varepsilon$ has the mountain pass geometry, as it is shown in the following

\begin{Lem}\label{MMMPPP}
The functional $J_\varepsilon$ satisfies the mountain pass geometry for any $p>3,$ provided $\kappa, \mu>0$ are small enough.
Furthermore, there exists a Palais-Smale (P-S) sequence at the minimax level
 $c_\varepsilon$. In particular, defining
$$S:=\{u\in E \, :\, u_-\equiv 0\}, $$   $$S_{1/n}:=\{u\in E\, : \, \inf_{y\in S}\|u-y\|_E < 1/n\}, $$
it is possible to select the P-S sequence  $(u_n)_n $ in such a way that $u_k \in S_{1/k}$ for all $k\in \mathbb N$.
 \end{Lem}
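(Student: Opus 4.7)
The plan is to verify the mountain-pass geometry for $J_\varepsilon$ on $E$ and then extract a Palais-Smale sequence asymptotically in the positive cone $S$ by a standard cone-preserving deformation argument. The three structural features driving everything are the positivity inequality \eqref{positivity} for $-\Delta-H$, the pointwise bound on $G_\varepsilon$ outside $\Lambda$ furnished by $(g_3)$, and the vanishing of $g_\varepsilon(x,s)$ for $s\leq 0$.

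For the lower bound near the origin, $(g_3)$ gives $G_\varepsilon(x,s)\leq\chi_\Lambda(x)\tfrac{K(x)}{p+1}s_+^{p+1}+(1-\chi_\Lambda(x))\tfrac{1}{2}(\varepsilon^2 H(x)+\mu V(x))s^2$. Inequality \eqref{positivity}, valid as soon as $\kappa<1/4$, absorbs the $\varepsilon^2 Hu^2$ contribution into $\int\varepsilon^2|\nabla u|^2$, while $\mu<1$ retains a positive multiple of $\int Vu^2$. Since $V>0$ on $\overline{\Lambda}$ and $\Lambda$ is bounded, the continuous embedding of $X$ into $L^{p+1}(\Lambda,K\,dx)$ (combining the $\mathcal{D}^{1,2}$-Sobolev bound with the Coulomb-type control on $\|u\|_3$) gives $\int_\Lambda Ku_+^{p+1}\leq C\|u\|_X^{p+1}$. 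Writing $D(u):=\bigl(\iint\frac{u^2(x)u^2(y)\rho(x)\rho(y)}{|x-y|}\,dx\,dy\bigr)^{1/2}$, so that $\|u\|_X^2=\|u\|_{H_{V,1}}^2+D(u)$ and $\int\phi_u u^2\rho=D(u)^2/(4\pi)$, for $\|u\|_E=\|u\|_X=r$ small one splits into two cases: if $\|u\|_{H_{V,1}}\geq r/\sqrt 2$ then $J_\varepsilon(u)\gtrsim r^2-Cr^{p+1}$, and if $D(u)\geq r^2/2$ then $J_\varepsilon(u)\gtrsim r^4-Cr^{p+1}$. Since $p>3$ gives $p+1>4$, both bounds are strictly positive for small $r$. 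For the negative endpoint, pick $v\in E$, $v\geq 0$, $v\not\equiv 0$ with $\supp v\subset\Lambda$ (possible by \eqref{hypLambda1}--\eqref{hypLambda2}): on $\supp v$, $G_\varepsilon(x,tv)=\tfrac{K(x)}{p+1}(tv)^{p+1}$, so $J_\varepsilon(tv)$ is a polynomial in $t$ of degree $p+1>4$ with negative leading coefficient, hence $J_\varepsilon(tv)\to-\infty$ and $e:=t_0 v$ works for $t_0$ large. The classical Ambrosetti--Rabinowitz theorem then produces a Palais-Smale sequence at level $c_\varepsilon$.

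To refine this into a PS sequence lying in $S_{1/n}$, exploit that $g_\varepsilon(x,s)=0$ for $s\leq 0$, $u_+\cdot u_-=0$ and $\nabla u_+\cdot\nabla u_-=0$ a.e., together with the expansion $u^2(x)u^2(y)=u_+^2(x)u_+^2(y)+\text{(nonnegative cross terms)}$ in the Coulomb quartic, to verify the two inequalities
\[
J_\varepsilon(u)-J_\varepsilon(u_+)\geq\tfrac{1}{2}\|u_-\|_{H_{V,\varepsilon}}^2,\qquad\langle J_\varepsilon'(u),-u_-\rangle\geq\|u_-\|_{H_{V,\varepsilon}}^2.
\]
The first, combined with the continuity of $u\mapsto u_+$ in $E$ (routine for the $H_{V,1}$-part, and obtained from dominated convergence for the Coulomb piece), implies that the infimum in \eqref{ceps} is unchanged when restricted to paths with values in $S$. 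The second says that $-J_\varepsilon'(u)$ points towards $S$ whenever $u\notin S$, so a pseudo-gradient vector field for $J_\varepsilon$ can be modified to have its flow quasi-preserve each open neighbourhood $S_{1/n}$. Applying the quantitative deformation lemma (in the Ghoussoub--Preiss/Willem form) to a minimizing sequence of paths contained in $S$ then produces a PS sequence $(u_n)$ at level $c_\varepsilon$ with $u_n\in S_{1/n}$ for every $n$.

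The main obstacle is this last step: adapting the standard deformation scheme to the reflexive, non-Hilbert Banach space $E$, where the quartic Coulomb term prevents a direct Hilbert-type argument, and constructing a pseudo-gradient whose flow respects the convex cone $S$. Once the two inner-product-type inequalities above are in hand, the remainder is a classical localization of the deformation argument.
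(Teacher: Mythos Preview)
Your verification of the mountain-pass geometry is correct and close in spirit to the paper's. The paper obtains the lower bound on small spheres by an algebraic expansion of
\[
J_\varepsilon(u)\geq c\|u\|_{H_{V,\varepsilon}}^2+\tfrac{1}{4}\bigl(\|u\|_E^2-\|u\|_{H_{V,\varepsilon}}^2\bigr)^2-C\|u\|_E^{p+1},
\]
rather than your case-splitting, but the two arguments are equivalent; both rely on $p+1>4$ and on Hardy's inequality (the paper's Lemma~\ref{Lambdaout}) to absorb the $\varepsilon^2 H$ contribution.

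Where you diverge from the paper is in the localization of the Palais--Smale sequence, and here you are making the problem harder than it is. The paper does \emph{not} build a cone-preserving pseudo-gradient flow; it simply invokes the general minimax principle in Willem \cite[p.~41]{MWWW}, which is stated for $C^1$ functionals on an arbitrary Banach space and requires no Hilbert structure whatsoever. That principle already delivers, for each minimizing path $\gamma_n\in\Gamma_\varepsilon$ with $\max_t J_\varepsilon(\gamma_n(t))\leq c_\varepsilon+1/n$, a Palais--Smale point $u_n$ satisfying $\mathrm{dist}(u_n,\gamma_n([0,1]))<1/n$. Since $J_\varepsilon(|u|)\leq J_\varepsilon(u)$ (the paper writes equality, but only the inequality is used, and it follows from $G_\varepsilon(x,s)=0$ for $s\leq 0$), one may replace $\gamma_n$ by $|\gamma_n|$, which lies in $S$ and is still an admissible minimizing path; continuity of $u\mapsto|u|$ in $E$ follows exactly as you observe for $u\mapsto u_+$. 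Hence $u_n\in S_{1/n}$.

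So your ``main obstacle'' evaporates: the inequality $\langle J_\varepsilon'(u),-u_-\rangle\geq\|u_-\|_{H_{V,\varepsilon}}^2$ and the modification of the pseudo-gradient are superfluous, and no adaptation of the deformation scheme to the non-Hilbert space $E$ is needed. Your use of $u\mapsto u_+$ in place of the paper's $u\mapsto|u|$ is equally valid and arguably cleaner, since $G_\varepsilon(x,u)=G_\varepsilon(x,u_+)$ holds with genuine equality.
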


\begin{proof}
We first prove that, for any $p>3$, the origin is a local minimum for $J_\varepsilon.$ Notice that
$\|u\|^{p+1}_{L^{p+1}_K(\Lambda)}\leq C \|u\|^{p+1}_E.$ Furthermore by Lemma \ref{Lambdaout} below, taking $\kappa, \mu>0$
small enough, we have $$J_\varepsilon(u)\geq c\int_{{\mathbb {R}}^{3}}(\varepsilon^2\left|\nabla
u\right|^{2}+V(x)u^{2})dx+\frac{1}{4}\int_{{\mathbb
{R}}^{3}}\phi_{u}u^{2}\rho(x)dx-C\|u\|^{p+1}_E.$$  Since, by definition, we have $(\int_{{\mathbb
{R}}^{3}}\phi_{u}u^{2}\rho(x)dx)^{1/2}=\|u\|^2_E-\|u\|^2_{H_{V,\varepsilon}},$ we get
\begin{eqnarray*}
J_\varepsilon(u)&\geq& c\|u\|^2_{H_{V,\varepsilon}}+\frac{1}{4}\left[\|u\|^2_E-\|u\|^2_{H_{V,\varepsilon}}\right]^2-C\|u\|^{p+1}_E
\\ &=&c\|u\|^2_{H_{V,\varepsilon}}+\frac{1}{4}\|u\|^{4}_E-\frac{1}{2}\|u\|^2_{H_{V,\varepsilon}}\|u\|^2_E+
\frac{1}{4}\|u\|^4_{H_{V,\varepsilon}}-C\|u\|^{p+1}_E.
 \end{eqnarray*}
Therefore, we get
$$
J_\varepsilon(u)\geq
c\|u\|^2_{H_{V,\varepsilon}}-\frac{\alpha^2-1}{4}\|u\|^4_{H_{V,\varepsilon}}+\frac{\alpha^2-1}{4\alpha^2}\|u\|^{4}
_E-C\|u\|^{p+1}_E.$$ Let $\|u\|^2_E<\delta.$ Then we have
$$
J_\varepsilon(u) \geq \left[c-\frac{\alpha^2-1}{4}\delta^2 \right]\|u\|^2_{H_{V,\varepsilon}} + \left[\frac{\alpha^2-1}{4\alpha^2}- C \delta^{p-3}\right]\|u\|^4_{E}.
$$
This yields, for $\alpha>1$ and $\delta$ small enough,
$$J_\varepsilon(u)\geq \left[\frac{\alpha^2-1}{4\alpha^2}- C \delta^{p-3} \right]\|u\|^4_{E}.$$
Hence, the origin is a strict local minimum point for
$J_\varepsilon.$

Moreover, $J_\varepsilon$ attains negative values along curves of the form $u_t:=t u$, with $u\in E$ such that $u^{+}\not\equiv 0$ and $t>0$. Hence $J_\varepsilon$ has the mountain pass geometry.

By the general minimax principle \cite[p.41]{MWWW}, there exists a P-S  sequence $(u_n)_n$ such that, if for $\gamma_n\in \Gamma$,
$$\max_{t\in [0,1]}J_\varepsilon (\gamma_n(t))\leq c_\varepsilon+ \frac{1}{n},$$ then
\begin{equation}\label{positivecone}
\textrm{dist}(u_n, \gamma_n([0,1]))<\frac{1}{n}.
\end{equation}
Finally, since $J_\varepsilon (u)=J_\varepsilon (|u|),$ the conclusion follows from (\ref{positivecone}).
\end{proof}

\begin{Lem}\label{Lambdaout}
For any positive constants $c>0$ there exists $\kappa(c)$ such that
\end{Lem}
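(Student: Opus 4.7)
The statement is cut off in the excerpt, but the way this lemma is invoked in the proof of Lemma \ref{MMMPPP} makes it clear that its conclusion must be an estimate of the form
\[
\int_{\R^3 \setminus \Lambda} G_\varepsilon(x,u)\, dx \leq c\int_{\R^3}\bigl(\varepsilon^2 |\nabla u|^2 + V(x) u^2\bigr) dx,\qquad \forall u \in E,
\]
valid for all $\kappa$ (and a corresponding $\mu$) small enough in terms of $c$. My plan is therefore to produce exactly such a bound via the penalization structure and Hardy's inequality.

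The starting point is property $(g_3)$ of the penalized nonlinearity: outside $\Lambda$ one has the pointwise control
\[
2\, G_\varepsilon(x,s) \leq g_\varepsilon(x,s)s \leq \bigl(\varepsilon^2 H(x) + \mu V(x)\bigr) s^2,
\]
so the integral splits naturally into a $V$-contribution and an $H$-contribution, each of which I handle independently.

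For the $V$-contribution I would just write
\[
\int_{\R^3 \setminus \Lambda} \mu V(x)\, u^2\, dx \;\leq\; \mu \int_{\R^3} V(x)\, u^2\, dx \;\leq\; \mu\, \|u\|^2_{H_{V,\varepsilon}},
\]
which is already of the desired form provided $\mu$ is chosen of order $c$. For the $H$-contribution I would combine the pointwise bound $H(x) \leq \kappa/|x|^2$ from the very definition of $H$ with the classical Hardy inequality in $\R^3$ to obtain
\[
\int_{\R^3} \varepsilon^2 H(x)\, u^2\, dx \;\leq\; \kappa \int_{\R^3} \frac{\varepsilon^2 u^2}{|x|^2}\, dx \;\leq\; 4\kappa \int_{\R^3} \varepsilon^2 |\nabla u|^2\, dx \;\leq\; 4\kappa\, \|u\|^2_{H_{V,\varepsilon}}.
\]
Adding the two estimates gives $\int_{\R^3 \setminus \Lambda} G_\varepsilon(x,u)\, dx \leq (2\kappa + \mu/2)\,\|u\|^2_{H_{V,\varepsilon}}$, and it remains to fix $\kappa(c)$ and $\mu(c)$ so that the prefactor does not exceed $c$.

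There is really no serious obstacle here: the lemma is essentially a bookkeeping statement whose role is precisely to exploit the two small parameters $\kappa$ and $\mu$ built into the penalization. The one point worth emphasizing is that the argument implicitly uses the inclusion $E \subset \mathcal{D}^{1,2}(\R^3)$ to justify the use of Hardy's inequality on the $\varepsilon^2 H(x) u^2$ integral — exactly the same positivity principle \eqref{positivity} used just above in the excerpt — and this is the feature that makes the penalization scheme of \cite{MVS,BDCVS} compatible with the present weighted setting allowing $V$ to vanish or be compactly supported.
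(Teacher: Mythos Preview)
Your guess at the missing conclusion is broader than the actual statement. The full lemma in the paper reads: for any $c>0$ there exists $\kappa(c)$ such that
\[
c\int_{\R^3}|\nabla u|^{2}\, dx \;\geq\; \int_{\R^3 \setminus \Lambda} H(x)\,u^2\, dx,\qquad \forall\,\kappa<\kappa(c),\ \forall\, u \in \mathcal D^{1,2}(\R^3),
\]
and the paper's proof is a single line: ``The claim follows directly from Hardy's inequality.'' This is exactly your treatment of what you called the ``$H$-contribution'': $H(x)\le \kappa/|x|^2$ together with the sharp Hardy inequality gives $\int_{\R^3} H u^2 \le 4\kappa \int_{\R^3}|\nabla u|^2$, and one takes $\kappa(c)=c/4$.

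So the core argument you give is correct and identical to the paper's. The additional material you supplied --- the $(g_3)$ splitting and the $\mu V$ bound --- is not part of this lemma at all; it belongs to the surrounding proofs (Lemma~\ref{MMMPPP} and Lemma~\ref{negativepartSP}), where the authors combine the present lemma with $(g_3)$ and a smallness choice of $\mu$ to absorb $\int_{\R^3\setminus\Lambda} G_\varepsilon(x,u)\,dx$ into the $H_{V,\varepsilon}$-norm. Your reconstruction of the statement from its usage therefore conflated the lemma with its application, but no mathematical error results.
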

\begin{eqnarray*}
c\int_{\mathbb R^3}\left|\nabla u\right|^{2} dx \geq \int_{\mathbb R^3 \setminus \Lambda} H(x)u^2 dx, \quad \forall
\kappa<\kappa(c), \,\,\forall u \in \mathcal D^{1,2}(\mathbb R^3),
\end{eqnarray*}
\begin{proof}
The claim follows directly from  Hardy's inequality.
\end{proof}

We now study some properties of the P-S sequences found in Lemma \ref{MMMPPP}.

\begin{Lem}\label{negativepartSP}
Let  $(u_n)_n $ be as in Lemma \ref{MMMPPP}  {\it such that}  $u_k \in S_{1/k}$ for all $k\in \mathbb N.$ Then
\begin{eqnarray*}
&(a)& \quad \int_{\mathbb R^3 } \phi_{(u_n)_-}(x)(u_n)_-^2 \rho(x)dx\rightarrow 0.
\end{eqnarray*}
{\it Furthermore, } $(u_n)_n $ {\it is bounded in} $E,$ {\it provided } $\kappa,\mu>0$ {\it are small enough, and we
have}
$$(b) \quad \,\,\,\, \,\int_{\mathbb R^3 } \phi_{u_n}(x)(u_n)_-^2 \rho(x)dx\rightarrow 0.$$
\end{Lem}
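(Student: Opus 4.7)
The plan is to treat $(a)$, the boundedness in $E$, and $(b)$ in this order.

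For $(a)$, I would exploit the fact that $u_n\in S_{1/n}$ from Lemma~\ref{MMMPPP}: choose $y_n\in S$ with $\|u_n-y_n\|_E<1/n$. Since $y_n\geq 0$, the pointwise inequality $0\leq (u_n)_-\leq |u_n-y_n|$ holds, so $(u_n)_-^2(x)(u_n)_-^2(y)\leq (u_n-y_n)^2(x)(u_n-y_n)^2(y)$. Multiplying by the non-negative kernel $\rho(x)\rho(y)/|x-y|$, integrating, and invoking the very definition of the $X$-norm, one obtains
\[
 \int_{\R^3}\phi_{(u_n)_-}(u_n)_-^2\rho\,dx \;\leq\; \frac{1}{4\pi}\|u_n-y_n\|_X^4 \;<\;\frac{1}{4\pi\,n^4}\;\to\;0,
\]
which is $(a)$. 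Note that this already works without boundedness of $(u_n)$.

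For the $E$-boundedness I would pick $\theta\in(4,p+1]$, which is possible because $p>3$, and start from
\[
 \theta J_\varepsilon(u_n)-\langle J_\varepsilon'(u_n),u_n\rangle=\bigl(\tfrac{\theta}{2}-1\bigr)\|u_n\|_{H_{V,\varepsilon}}^2+\bigl(\tfrac{\theta}{4}-1\bigr)\!\int_{\R^3}\phi_{u_n}u_n^2\rho\,dx+\int_{\R^3}\bigl(g_\varepsilon(\cdot,u_n)u_n-\theta G_\varepsilon(\cdot,u_n)\bigr)dx.
\]
By $(g_3)$ the integrand $gu-\theta G$ is non-negative on $\Lambda$ (since $\theta\leq p+1$), and on $\R^3\setminus\Lambda$ it is bounded below by $-\tfrac{\theta}{2}(\varepsilon^2 H+\mu V)u_n^2$. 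Lemma~\ref{Lambdaout} absorbs the $H$-piece into $\|u_n\|_{H_{V,\varepsilon}}^2$ with a constant $c(\kappa)\to0$ as $\kappa\to 0$, while the $V$-piece contributes a factor $\mu$. Taking $\kappa,\mu$ small enough that both coefficients $(\theta/2-1)-(\theta/2)(c(\kappa)+\mu)$ and $\theta/4-1$ stay strictly positive, and using that the left-hand side is $\theta c_\varepsilon+o(1)+o(\|u_n\|_E)$ by the Palais--Smale property, one reaches
\[
 c_1\|u_n\|_{H_{V,\varepsilon}}^2+c_2\!\int_{\R^3}\phi_{u_n}u_n^2\rho\,dx\;\leq\; C+o(1)\|u_n\|_E.
\]
Since $\|u_n\|_E^2$ is controlled for fixed $\varepsilon$ by a multiple of $\|u_n\|_{H_{V,\varepsilon}}^2+(\int\phi_{u_n}u_n^2\rho\,dx)^{1/2}$, a routine Young absorption closes the argument.

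Finally, $(b)$ follows from $(a)$ by Cauchy--Schwarz for the positive-definite Coulomb form on non-negative functions:
\[
 \int_{\R^3}\phi_{u_n}(u_n)_-^2\rho\,dx\;\leq\;\Bigl(\int_{\R^3}\phi_{u_n}u_n^2\rho\,dx\Bigr)^{1/2}\Bigl(\int_{\R^3}\phi_{(u_n)_-}(u_n)_-^2\rho\,dx\Bigr)^{1/2},
\]
the first factor being bounded by the previous step and the second tending to $0$ by $(a)$. The delicate point will be the boundedness step: the penalization produces an unsigned contribution outside $\Lambda$ whose reabsorption relies simultaneously on Hardy's inequality (through Lemma~\ref{Lambdaout}) and on the smallness of $\mu$, and the strict positivity of the coefficient of $\int\phi_{u_n}u_n^2\rho\,dx$ is precisely where the hypothesis $p>3$ enters.
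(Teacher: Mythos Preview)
Your proof is correct and follows essentially the same route as the paper: part $(a)$ via the pointwise bound $(u_n)_-\le |u_n-y_n|$ and the definition of the $X$-norm, boundedness via the combination $\theta J_\varepsilon(u_n)-\langle J_\varepsilon'(u_n),u_n\rangle$ together with $(g_3)$ and Lemma~\ref{Lambdaout}, and part $(b)$ via the Cauchy--Schwarz inequality for the Coulomb energy $D(f,g)^2\le D(f,f)D(g,g)$. The only cosmetic difference is that the paper fixes $\theta=p+1$ (so that $g_\varepsilon u-\theta G_\varepsilon\equiv 0$ on $\Lambda$), whereas you allow any $\theta\in(4,p+1]$; both choices make the coefficient $\theta/4-1$ of the Poisson term positive exactly because $p>3$, and you are slightly more explicit than the paper about absorbing the $o(1)\|u_n\|_E$ term coming from $\langle J_\varepsilon'(u_n),u_n\rangle$.
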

\begin{proof}
By definition, there exists a sequence $(y_n)_n \subset S$ such that $$\|u_n-y_n\|_E\rightarrow 0.$$
Hence $(a)$ follows:
\begin{eqnarray*}
\int_{\mathbb R^3 } \phi_{(u_n)_-}(x)(u_n)_-^2 \rho(x)dx&=&\int_{\mathbb R^3}\int_{\mathbb R^3 }
\frac{(u_n)_-^2(x)\rho(x)(u_n)_-^2(z)\rho(z)}{|x-z|}dxdz\\ &\leq&\int_{\mathbb R^3}\int_{\mathbb R^3 }
\frac{(u_n-y_n)^2(x)\rho(x)(u_n-y_n)^2(z)\rho(z)}{|x-z|}dxdz  \\&\leq &\|u_n-y_n\|^4_E \rightarrow 0\end{eqnarray*}
Notice that $(b)$ follows if we prove that $(u_n)_n$ is bounded. Indeed, define, for $f,g$ measurable and nonnegative functions, the
following quantity $$D(f,g):=\int_{\R^3}\int_{\R^3} f(x)|x-y|^{-1}g(y)dxdy.$$ From \cite[p.250]{LiebLoss},
we have
\begin{equation}\label{liebloss}
|D(f,g)|^2\leq D(f,f)D(g,g).
\end{equation}

If $(u_n)_n$ is bounded in $E,$ by the inequality above with $f:=u^2_n \rho$ and $g:=(u_n)_-^2\rho$ and by $(a)$
we have
$$
\int_{\mathbb R^3 } \phi_{u_n}(x)(u_n)_-^2 \rho(x)dx \leq  C \int_{\mathbb R^3 } \phi_{(u_n)_-}(x)(u_n)_-^2 \rho(x)dx
\rightarrow 0 $$
We now prove that $(u_n)_n$ is bounded.

Define $$\Delta_n:=\int_{\mathbb R^3}\left(g_\varepsilon(x, u_n)u_n-(p+1)G_\varepsilon(x,u_n)\right) dx$$ Using Lemma \ref{Lambdaout},
we have, choosing $\kappa,\mu>0$ small enough,
\begin{eqnarray*}
\Delta_n &\geq& -(p+1)\int_{\mathbb R^3\setminus \Lambda}G_\varepsilon(x,u_n)dx\\
&\geq& -\frac{p+1}{2}\int_{\mathbb
R^3\setminus \Lambda} [\varepsilon^2 H(x)+ \mu V(x)]u_n^2 dx\\
&\geq& - \frac{p-1}{4} \|u_n\|^2_{H_{V,\varepsilon}}.
\end{eqnarray*}
Since $(u_n)_n$ is a P-S  sequence, the above estimate yields
\begin{eqnarray}
C&\geq& (p+1)J_\varepsilon(u_n)-(J'_\varepsilon(u_n),u_n)\nonumber \\  &=&
\frac{p-1}{2}\|u_n\|^2_{H_{V,\varepsilon}}+\frac{p-3}{4}\int_{\mathbb R^3} \phi_{u_n}(x)u_n^2 \rho(x)dx + \Delta_n
\nonumber \\
&\geq& \frac{p-1}{4}\|u_n\|^2_{H_{V,\varepsilon}}+\frac{p-3}{4}\int_{\mathbb R^3} \phi_{u_n}(x)u_n^2
\rho(x)dx.\label{boundedPS}
\end{eqnarray}
As a consequence, the claim follows.
\end{proof}
In the following we shall need a family of cut-off functions.
Consider a smooth function $\zeta(r) $ such that $\zeta(r)=1$ on $[2,\infty)$ and $\zeta(r)=0$ on $[0,1].$ Then define
$$\eta_R (x):=\zeta\Big(\frac{\log (1+|x|)}{R}\Big).$$
One has \begin{equation}\label{cut}
\||x|\cdot|\nabla \eta_R(x)|\|_\infty\leq \frac{C}{R}.
\end{equation}
\begin{Lem}\label{complem}
Let $(u_n)_n \subset E$ be as in Lemma \ref{MMMPPP} and $u_n\rightharpoonup u\geq 0$ in $E$. Then, for all $\delta>0,$
there exists a ball $B\subset\R^3$ such that, for all $\kappa,\mu>0$ small enough,
\end{Lem}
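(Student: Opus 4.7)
The plan is to establish a uniform tightness estimate by testing the Palais--Smale condition against the localized function $\eta_R u_n$, with $R$ chosen large enough that $\supp(\eta_R) \cap \Lambda = \emptyset$; this is possible since $\Lambda$ is bounded while $\eta_R$ vanishes on $\{x : \log(1+|x|) \leq R\}$. Since $J_\varepsilon'(u_n) \to 0$ in $E^*$ and $\eta_R u_n$ is bounded in $E$, the pairing $\langle J_\varepsilon'(u_n), \eta_R u_n \rangle$ tends to zero, yielding
\begin{align*}
\int_{\R^3} \eta_R \bigl(\varepsilon^2 |\nabla u_n|^2 + V u_n^2 + \phi_{u_n} \rho\, u_n^2\bigr)\,dx = \int_{\R^3} g_\varepsilon(x,u_n)\, \eta_R u_n\,dx - \varepsilon^2 \int_{\R^3} u_n \nabla u_n \cdot \nabla \eta_R\,dx + o(1).
\end{align*}

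First I would control the gradient cross term via Cauchy--Schwarz, the pointwise bound $\||x|\,|\nabla \eta_R|\|_\infty \leq C/R$ from \eqref{cut}, and Hardy's inequality, relying on the boundedness of $(u_n)$ in $E$ provided by Lemma~\ref{negativepartSP}; the outcome is that this error is $O(1/R)$. Next, since $\supp(\eta_R)$ avoids $\Lambda$, property $(g_3)$ of the penalized nonlinearity bounds $\int g_\varepsilon(x,u_n) \eta_R u_n\,dx$ by $\int \eta_R (\varepsilon^2 H + \mu V) u_n^2\,dx$. Lemma~\ref{Lambdaout} (i.e.\ Hardy again) absorbs the $H$-contribution into a fraction of $\varepsilon^2\int \eta_R|\nabla u_n|^2$ provided $\kappa$ is small, and the factor $\mu$ makes the $V$-term absorbable for $\mu$ small.

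Combining these steps leaves an inequality of the form
\begin{align*}
c \int_{\R^3} \eta_R \bigl(\varepsilon^2 |\nabla u_n|^2 + V u_n^2\bigr)\,dx + \int_{\R^3} \eta_R \phi_{u_n} \rho\, u_n^2\,dx \leq \frac{C}{R} + o(1).
\end{align*}
For the prescribed $\delta > 0$, one first picks $R$ with $C/R < c\delta/2$ and then $n$ large enough that the $o(1)$ remainder lies below $c\delta/2$; taking $B$ to be the ball $\{x : \log(1+|x|) < 2R\}$, on whose complement $\eta_R \equiv 1$, yields the claimed decay of the tail of the $E$-norm and of the Poisson energy.

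The main obstacle is making the gradient cross term genuinely small: the logarithmic scaling built into $\eta_R$, together with the Hardy estimate, is precisely what converts a naively $O(1)$ boundary contribution into $O(1/R)$, and without this tailored choice the absorption would fail. A minor but pleasant technical point is that the nonlocal Poisson term $\int \eta_R \phi_{u_n} \rho u_n^2\,dx$ appears with the useful sign on the left, so no separate estimate on the interaction is needed beyond the boundedness already established in Lemma~\ref{negativepartSP}.
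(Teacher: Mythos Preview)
Your approach for the tightness of $\int \phi_{u_n}\rho\, u_n^2$ and $\int V u_n^2$ outside large balls is essentially the paper's: test $J_\varepsilon'(u_n)$ against a localization of $u_n$, bound the gradient cross term by $O(1/R)$ via \eqref{cut} and Hardy, and use $(g_3)$ outside $\Lambda$. One technical correction is needed, however. The paper tests against $u_n\eta_R^2$, not $u_n\eta_R$, precisely so that the penalization contribution becomes $\int \eta_R^2 H u_n^2 = \int H(u_n\eta_R)^2$ and the genuine Hardy inequality can be applied to the function $u_n\eta_R\in\mathcal D^{1,2}(\R^3)$; this is what produces the localized positivity estimate \eqref{gradH}. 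Your assertion that Lemma~\ref{Lambdaout} ``absorbs the $H$-contribution into a fraction of $\varepsilon^2\int\eta_R|\nabla u_n|^2$'' does not hold as written: that lemma only bounds $\int_{\R^3\setminus\Lambda}Hu^2$ by the \emph{global} integral $\int_{\R^3}|\nabla u|^2$, which is bounded but not small as $R\to\infty$, so nothing can be absorbed into the localized left-hand side. Replacing $\eta_R$ by $\eta_R^2$ repairs this with no change to your cross-term estimate.

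You should also be aware that the full lemma has five conclusions (a)--(e). Your argument delivers (a) and (b), and once \eqref{gradH} is in hand it gives (c) as well. Parts (d) and (e), which concern the mixed tails $\int_{\R^3\setminus B}\phi_{u_n}(u_n)_\pm\,u\,\rho\,dx$ involving the fixed weak limit $u$, are not reached by testing against localizations of $u_n$. For (d) the paper invokes the Coulomb bilinear bound \eqref{liebloss} together with the vanishing of the negative part from Lemma~\ref{negativepartSP}; for (e) one must test $J_\varepsilon'(u_n)$ against $u\eta_R$ instead, pass to the limit in the bilinear terms by weak convergence, and combine (d) with absolute continuity of the integrals involving $u$.
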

\begin{eqnarray*}
  &a)& \quad \quad \limsup_ {n\rightarrow \infty}\int_{{\mathbb {R}}^{3}\setminus B}\phi_{u_{n}}(x)u^2_{n}\rho
(x)dx<\delta,\\
   &b)& \quad \quad \limsup_ {n\rightarrow \infty}\int_{{\mathbb {R}}^{3}\setminus B}V(x)u_n^2dx<\delta, \\
      &c)& \quad \quad \limsup_ {n\rightarrow \infty}\int_{{\mathbb {R}}^{3}\setminus B}H(x)u_n^2dx<\delta, \\
 &d)& \quad \quad \limsup_ {n\rightarrow \infty}\int_{{\mathbb {R}}^{3}\setminus B}\phi_{u_{n}}(x)(u_{n})_-\,u\,\rho
(x)dx<\delta,\\
 &e)& \quad \quad \limsup_ {n\rightarrow \infty}\int_{{\mathbb {R}}^{3}\setminus B}\phi_{u_{n}}(x)(u_{n})_+\,u\,\rho
(x)dx<\delta.
\end{eqnarray*}

\begin{proof}
Consider the above family of cut-off functions. We claim that, uniformly in $n$,
\begin{equation} \label {gradH}
\int_{\mathbb R^3} \Big (\frac{1}{2}|\nabla u_n|^2-H(x)u_n^2\Big )\eta^2_R dx \geq O \Big(\frac{1}{R}\Big),\quad
R\rightarrow \infty,
 \end{equation}
for all $\kappa>0$ small enough.

In order to prove this, compute $$|\nabla(u_n \eta_R)|^2=\eta_R^2 |\nabla u_n|^2+ 2u_n \eta_R \nabla u_n \nabla \eta_R+
u_n^2|\nabla \eta_R|^2.$$
We have
$$\Big |\int_{{\mathbb {R}}^{3}}u_n\eta_R \nabla u_n \nabla \eta_R dx\Big |\leq C_1 \Big |\int_{{\mathbb
{R}}^{3}}\frac{u_n}{|x|}\nabla u_n |x|\nabla \eta_R dx\Big | \leq \||x|\nabla \eta_R\|_{\infty}\Big |\int_{{\mathbb
{R}}^{3}}\frac{u_n}{|x|}\nabla u_n\,dx\Big |. $$
By (\ref{cut}), Cauchy-Schwarz and Hardy inequalities, we
obtain\begin{equation}\label{har} \Big |\int_{{\mathbb {R}}^{3}}u_n\nabla u_n \nabla \eta_R dx\Big |\leq \frac{C_2}{R}\|\nabla
u_n\|^2_2\leq\frac{C_3}{R}.\end{equation} Here we take into account that, since $u_n\rightharpoonup u$ in $E,$ $\|\nabla u_n\|_2$ is
bounded. In the same way, one can easily obtain
$$\int_{\mathbb R^3}u_n^2|\nabla \eta_R|^2\leq \frac{C_4}{R^2}.$$
Hence, by Hardy's inequality and the above estimates, we have, as $R\rightarrow \infty,$
\begin{eqnarray*}
\int_{\mathbb R^3} \Big (\frac{1}{2}|\nabla u_n|^2-H(x)u_n^2\Big )\eta^2_R dx &\geq &\int_{\mathbb
R^3}\Big(\frac{1}{2}|\nabla( u_n\eta_R)|^2 - \kappa\frac{(u_n\eta_R)^2}{|x|^2}\Big)dx+ O \Big(\frac{1}{R}\Big) \\ &\geq
& O \Big(\frac{1}{R}\Big),
 \end{eqnarray*}
and the claim follows. Furthermore, simply notice that

\begin{equation} \label{cutt}
\int_{\mathbb R^3}\nabla u_n \nabla (u_n\eta^2_R) dx=\int_{\mathbb R^3}|\nabla u_n|^2\eta_R^2 dx+ O
\Big(\frac{1}{R}\Big),\quad R\rightarrow \infty,
 \end{equation}
Finally, by (\ref{gradH}) and (\ref{cutt}), we have, for $\mu,\kappa>0$ small enough,
\begin{eqnarray*}
o(1)&=&(J'_\varepsilon(u_n), u_n\eta^2_R)\\
&\geq& \frac{\varepsilon^2}{2}\int_{\mathbb R^3}|\nabla u_n|^2\eta_R^2 dx+\varepsilon^2\int_{\mathbb R^3} \Big
(\frac{1}{2}|\nabla u_n|^2-H(x)u_n^2\Big )\eta^2_R dx\\&+& \int_{\mathbb R^3}\phi_{u_n}u_n^2
\eta_R^2\rho(x)dx+(1-\mu)\int_{\mathbb R^3}V(x)u_n^2 \eta_R^2dx+ O
\Big(\frac{1}{R}\Big)\\&\geq&\frac{\varepsilon^2}{2}\int_{\mathbb R^3}|\nabla u_n|^2\eta_R^2 dx+ \int_{\mathbb
R^3}\phi_{u_n}u_n^2 \eta_R^2\rho(x)dx+ (1-\mu)\int_{\mathbb R^3}V(x)u_n^2 \eta_R^2dx+ O \Big(\frac{1}{R}\Big),
\end{eqnarray*}
as $R\rightarrow \infty$. Hence, taking $B:=\{x\in\R^3\,:\, |x|\leq e^{2R}\},$ since all the terms are nonnegative, the
above estimates yield
statements $(a),(b)$ and, using (\ref{gradH}), statement $(c).$

In order to prove $(d)$ we use Cauchy-Schwarz inequality and (\ref{liebloss}), obtaining
$$\int_{{\mathbb {R}}^{3}}\phi_{u_{n}}(x)(u_{n})_-\,u\,\rho (x)\eta_R dx\leq
(D(u_n^2,(u_n)_-^2))^{1/2}\Big(D(u_n^2,u_n^2)D(u^2,u^2)\Big)^{1/4}\rightarrow 0, $$ since $D(u_n^2,u_n^2)$ is bounded and
$D(u_n^2,(u_n)_-^2)\rightarrow 0$ by Lemma \ref{negativepartSP}.

Finally we prove $(e).$ We have, for any $R>0,$
\begin{align*}
o(1) &= (J'_\varepsilon(u_n), u\eta_R)\\
&\geq <u_n, u \eta_R>_{H_{V,\varepsilon}}+\int_{{\mathbb {R}}^{3}}\phi_{u_{n}}(x)(u_{n})_+\,u\,\rho (x)\eta_R
dx \\
&\quad -\int_{{\mathbb {R}}^{3}}\phi_{u_{n}}(x)(u_{n})_-\,u\,\rho (x)\eta_R dx - \int_{{\mathbb{R}}^{3}}(\varepsilon^2
H(x)+\mu V(x))(u_n)_+ u \eta_Rdx, \quad n\rightarrow \infty.
\end{align*}
Notice that, by weak convergence,
we have, for any $R$,
$$<u_n, u \eta_R>_{H_{V,\varepsilon}}\rightarrow <u, u \eta_R>_{H_{V,\varepsilon}}, $$
and
$$\int_{{\mathbb{R}}^{3}}(\varepsilon^2 H(x)+\mu V(x))(u_n)_+ u \eta_Rdx\rightarrow \int_{{\mathbb {R}}^{3}}(\varepsilon^2 H(x)+\mu V(x))u^2\eta_Rdx.$$
Now fix $\alpha>0$ small and take $R_\alpha$ such that for all $R> R_\alpha$ we have
$$\int_{{\mathbb {R}}^{3}}(\varepsilon^2 H(x)+\mu V(x))u^2 \eta_Rdx\leq\alpha$$
and, by $(d)$,
$$\int_{{\mathbb{R}}^{3}}\phi_{u_{n}}(x)(u_{n})_-\,u\,\rho (x)\eta_R dx\leq\alpha.$$
Arguing as for the estimate (\ref{har}), we can choose $R_\alpha$ large enough such that
$$\Big |\int_{{\mathbb {R}}^{3}}u\nabla u \nabla \eta_R dx\Big |\leq \frac{C}{R}<\alpha,$$ for every $R >R_\alpha.$ Hence, writing $\nabla(\eta_R \,u)=\eta_R\nabla u+u\nabla \eta_R,$ the term $<u,u\eta_R>_{H_{V,\varepsilon}}$ is the sum of a positive term plus a small term. Therefore, we obtain
$$o(1)+3 \alpha \geq \int_{{\mathbb{R}}^{3}}\phi_{u_{n}}(x)(u_{n})_+\,u\,\rho (x)\eta_R dx$$
and claim $(e)$ follows. This concludes the proof.
\end{proof}
Arguing as in the above lemmas we have

\begin{Lem} \label{sing2}
Under the assumptions on $\rho,V,K$ given in Theorem \ref{Th:main}, let $(u_n)$ be as in the above lemma. Then for all $\delta>0,$ there exists a ball $B(0)\subset \R^3,$
 such that
\end{Lem}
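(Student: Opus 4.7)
The plan is to mirror Lemma \ref{complem} verbatim, replacing the cut-off $\eta_R$ that killed large values of $|x|$ by a dual cut-off that kills values of $|x|$ bounded away from the origin. Concretely, I would set
\[
 \widetilde{\eta}_r(x) := \zeta\!\left(\frac{-\log |x|}{r}\right),
\]
so that $\widetilde{\eta}_r \equiv 1$ on $B(0,e^{-2r})$, $\widetilde{\eta}_r \equiv 0$ outside $B(0,e^{-r})$, and $\||x|\,\nabla \widetilde{\eta}_r\|_\infty \leq C/r$. For $r$ large, \eqref{hypLambda1} ensures that the support of $\widetilde{\eta}_r$ lies in the penalization region $\R^3 \setminus \Lambda$, so the favourable estimate in $(g_3)$ applies on the whole support.

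Following \eqref{gradH} line for line, Hardy's inequality gives, uniformly in $n$,
\[
 \int_{\R^3} \Bigl(\tfrac{1}{2}|\nabla u_n|^2 - H(x) u_n^2\Bigr)\widetilde{\eta}_r^2\, dx \geq O(1/r),
\]
provided $\kappa$ is small, the only new ingredient being that the Hardy inequality is being applied to $u_n \widetilde{\eta}_r$, whose support is a small ball around the origin rather than a large annulus. Then I would test $J'_\varepsilon(u_n)$ against $u_n \widetilde{\eta}_r^2$; arguing exactly as in the proof of Lemma \ref{complem}, and using the penalization control in $(g_3)$ to absorb the nonlinearity into $\tfrac{\varepsilon^2}{2}\int |\nabla u_n|^2 \widetilde{\eta}_r^2 + (1-\mu)\int V u_n^2 \widetilde{\eta}_r^2$, the three nonnegative quantities
\[
 \int_{B(0,e^{-r})} \phi_{u_n} u_n^2 \rho, \qquad \int_{B(0,e^{-r})} V u_n^2, \qquad \int_{B(0,e^{-r})} H u_n^2
\]
are all bounded by $o(1) + O(1/r)$, establishing the analogues of $(a),(b),(c)$ with $B(0) := B(0,e^{-r})$ for $r$ large.

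For the analog of $(d)$, the Lieb--Loss inequality \eqref{liebloss} applied with $f := u_n^2\rho$ and $g := (u_n)_-^2 \rho$ together with Lemma \ref{negativepartSP}$(a)$ gives the convergence directly without further cut-offs. For the analog of $(e)$, I would test $J'_\varepsilon(u_n)$ against $u\,\widetilde{\eta}_r$, pass to the weak limit in the linear $H_{V,\varepsilon}$-pairing (using that $\||x|\nabla \widetilde{\eta}_r\|_\infty \leq C/r$ controls the error term $\int u \nabla u \cdot \nabla \widetilde{\eta}_r$ as in \eqref{har}), control the $H$ and $V$ contributions by absolute continuity of the finite measures $\varepsilon^2 H u^2\,dx$ and $V u^2\,dx$ on the shrinking set $\{\widetilde{\eta}_r \neq 0\}$, and combine with $(d)$.

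The only genuinely new difficulty compared to Lemma \ref{complem} is that $V$ may blow up at $0$ under $(\mathcal{G}_0^2)$ or $(\mathcal{G}_0^3)$, so one must verify that $\int_{B(0,e^{-r})} V u^2\, dx \to 0$ as $r \to \infty$. This is immediate: since $u \in E \subset X$ satisfies $\int V u^2 < \infty$, absolute continuity of the integral guarantees smallness on any set of vanishing measure, in particular on $B(0,e^{-r})$ for $r$ large.
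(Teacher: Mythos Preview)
Your proposal is correct and follows exactly the route the paper intends: the paper's own proof is simply ``Arguing as in the above lemmas we have'', and your dual cut-off $\widetilde{\eta}_r(x)=\zeta(-\log|x|/r)$ together with the Hardy, Lieb--Loss and absolute-continuity arguments is precisely the mirror of Lemma~\ref{complem} needed to carry this out. The only cosmetic remark is that $\zeta$ must be extended by $0$ on $(-\infty,0]$ for $\widetilde{\eta}_r$ to be defined everywhere, and that parts $(b),(c)$ of Lemma~\ref{sing2} correspond to your analogues of $(d),(e)$ in Lemma~\ref{complem} (the intermediate $V u_n^2$ and $H u_n^2$ bounds you derive are not stated but are indeed needed for $(c)$).
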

\begin{eqnarray*}
  &(a)& \quad \quad \limsup_ {n\rightarrow \infty}\int_{B(0)}\phi_{u_{n}}(x)u^2_{n}\rho (x)dx<\delta,\\
  &(b)&\quad \quad \limsup_ {n\rightarrow \infty}\Big |\int_{B(0)}\phi_{u_{n}}(x)(u_{n})_-u \rho (x)dx\Big
|<\delta,\\
 &(c)&\quad \quad \limsup_ {n\rightarrow \infty}\Big |\int_{B(0)}\phi_{u_{n}}(x)(u_{n})_+u \rho (x)dx\Big
|<\delta.
 \end{eqnarray*}

\begin{Lem}\label{HVVV2}
Let $(u_n)_n$  be  as in Lemma \ref{complem}. Then, passing if necessary to a subsequence, we have
$$\left\|u_{n}\right\|^{2}_{H_{V,\varepsilon}}\rightarrow  \left\|u\right\|^{2}_{H_{V,\varepsilon}}.$$
\end{Lem}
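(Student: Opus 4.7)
The plan is to exploit the Palais–Smale condition by testing $J_\varepsilon'(u_n)$ against $v_n := u_n - u$, which is bounded in $E$ by Lemma \ref{negativepartSP}, so that $\langle J_\varepsilon'(u_n), v_n\rangle\to 0$. Expanding this pairing gives
\begin{equation*}
\langle u_n, v_n\rangle_{H_{V,\varepsilon}} + \int_{\R^3}\phi_{u_n} u_n v_n \rho\,dx - \int_{\R^3} g_\varepsilon(x,u_n)\, v_n\,dx = o(1).
\end{equation*}
Since $u_n\rightharpoonup u$ weakly in the Hilbert space $H_{V,\varepsilon}$, the first term equals $\|u_n\|_{H_{V,\varepsilon}}^2 - \|u\|_{H_{V,\varepsilon}}^2 + o(1)$. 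Therefore the claim reduces to showing that both the Poisson-type integral and the penalized nonlinear integral above vanish as $n\to\infty$.

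For the Poisson integral, I would split $\R^3 = B(0,r)\cup A\cup (\R^3\setminus B(0,R))$ where $A = B(0,R)\setminus B(0,r)$. On the tail $\R^3\setminus B(0,R)$ and on the small ball $B(0,r)$, the tightness statements of Lemma~\ref{complem}(a),(d),(e) and Lemma~\ref{sing2} give, together with the Lieb–Loss inequality \eqref{liebloss} applied to split $\phi_{u_n} u_n v_n \rho$ via Cauchy–Schwarz, a uniform bound smaller than any prescribed $\delta$ once $r$ is small and $R$ is large. On the compact annulus $A$, $V$ is bounded below (away from zero of $V$) and $\rho\in L^\infty_{\text{loc}}(\R^3\setminus\{0\})$, so the $E$-norm dominates the local $H^1$-norm; Rellich–Kondrachov yields $u_n\to u$ strongly in $L^q(A)$ for every $q<6$, and, using that $\phi_{u_n}$ is bounded in $\mathcal D^{1,2}(\R^3)\hookrightarrow L^6(\R^3)$ together with $\rho\in L^{3/2}_{\text{loc}}$, Hölder closes the estimate on $A$.

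For the nonlinear integral, I split over $\Lambda$ and its complement. Outside $\Lambda$ the penalization gives $g_\varepsilon(x,u_n)\le (\varepsilon^2 H+\mu V)u_n$, and combining Lemma~\ref{complem}(b),(c) with Hardy's inequality and Cauchy–Schwarz yields $\int_{\R^3\setminus\Lambda}g_\varepsilon(x,u_n)v_n\,dx = o(1)$ provided $\kappa,\mu$ are sufficiently small. On $\Lambda$ we have $g_\varepsilon(x,u_n)=K(x)(u_n)_+^p$ with $K$ bounded on $\overline{\Lambda}$; since $V$ is bounded below on $\overline\Lambda$ the $H_{V,\varepsilon}$-norm is equivalent to the $H^1$-norm there, so by Rellich–Kondrachov $u_n\to u$ strongly in $L^q(\Lambda)$ for $q<6$, and combining with a.e.\ convergence and the uniform $L^{p+1}_K(\Lambda)$ bound (coming from the $E$-bound used in Lemma \ref{MMMPPP}) via Vitali's theorem forces $\int_\Lambda K(u_n)_+^p v_n\,dx\to 0$.

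The main obstacle is the Poisson integral: the non-local kernel $|x-y|^{-1}$ obstructs a naive localization, and one must separate the two arguments of $\phi_{u_n}u_n$ by the Lieb–Loss inequality \eqref{liebloss} so that the tightness lemmas can be applied factor by factor; once both the Poisson and nonlinear remainders vanish, the expanded identity collapses to $\|u_n\|_{H_{V,\varepsilon}}^2\to \|u\|_{H_{V,\varepsilon}}^2$, as desired.
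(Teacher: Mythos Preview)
Your strategy---testing $J_\varepsilon'(u_n)$ against $u_n-u$, writing the first term as $\|u_n\|^2_{H_{V,\varepsilon}}-\|u\|^2_{H_{V,\varepsilon}}+o(1)$, and then killing the Poisson and nonlinear remainders by a three-zone decomposition (small ball at the origin, compact annulus, tail) using Lemmas~\ref{complem} and~\ref{sing2} for the extremes and local compactness in the middle---is exactly the paper's approach.

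There is, however, one genuine gap in your treatment of the nonlinear term on $\Lambda$. You invoke Rellich--Kondrachov only for $q<6$ and then appeal to Vitali with the uniform $L^{p+1}_K(\Lambda)$ bound. Since the theorem allows any $p>3$ (with no upper restriction), for $p\ge 5$ a mere $L^{p+1}$ bound on $u_n$ does not by itself yield uniform integrability of $K(u_n)_+^p\,v_n$, so Vitali as you state it does not close. The paper avoids this by exploiting the cylindrical symmetry built into $E$: because $\bar\Lambda\cap d=\emptyset$, integrals over $\Lambda$ reduce (with a weight $|x''|$ bounded above and below) to integrals over a bounded $2$-dimensional domain, and hence $E\hookrightarrow L^q(\Lambda)$ compactly for \emph{every} $q>1$. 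One then gets $u_n\to u$ strongly in $L^{p+1}(\Lambda)$, and H\"older (or dominated convergence, after extracting an $L^{(p+1)/p}$ majorant for $g_\varepsilon(\cdot,u_n)$) finishes immediately. Similarly, for the nonlinear term on $\R^3\setminus\Lambda$ you should explicitly split once more into $B\setminus\Lambda$ (handled by the compact embedding $E\hookrightarrow L^2_{\varepsilon^2 H+\mu V}(B\setminus\Lambda)$) and $\R^3\setminus B$ (handled by Lemma~\ref{complem}(b),(c) via Cauchy--Schwarz); Lemma~\ref{complem}(b),(c) alone only controls the tail, not the full complement of $\Lambda$.

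With these two adjustments your argument coincides with the paper's proof.
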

\begin{proof}
Since $u_n\rightharpoonup u$ in $H_{V,\varepsilon},$ for some subsequence, we have $$o(1)=
(J'_\varepsilon
(u_{n}),u_{n}-u)=\left\|u_{n}\right\|^{2}_{H_{V,\varepsilon}}-\left\|u\right\|^{2}_{H_{V,\varepsilon}}+o(1)$$
\begin{equation}\label{asta2}
+\int_{{\mathbb {R}}^{3}}\phi_{u_{n}}(x)u_{n}(u_{n}-u)\rho(x)dx+\int_{{\mathbb {R}}^{3}}g_\varepsilon
(x,u_n)(u_{n}-u)dx.
\end{equation}
We show that $$A_n:=\int_{{\mathbb {R}}^{3}}g_\varepsilon (x,u_n)(u_{n}-u)dx\rightarrow 0$$
and $$B_n:=\int_{{\mathbb {R}}^{3}}\phi_{u_{n}}(x)u_{n}(u_{n}-u)\rho(x)dx\rightarrow 0.$$
 Observe that $$A_n:=\int_{\Lambda}...+\int_{B\setminus \Lambda}...+\int_{\mathbb R^3\setminus B}...,$$
 for some large ball $B$ containing $\Lambda.$
Since $(u_n)_n$ is bounded in $E$ and $E$ is compactly embedded in $L^q(\Lambda)$ for all $q>1,$
passing to a subsequence, we can assume $u_n\rightarrow u$ in
$L^{p+1}(\Lambda)$. As a consequence, passing if necessary to a subsequence, we have $|g_\varepsilon (x,u_n)|<g(x)$ for
some $g\in L^{\frac{p+1}{p}}(\Lambda).$  Using H\"older inequality and dominated convergence theorem, we have
$$\int_{\Lambda}g_\varepsilon (x,u_n)(u_{n}-u)dx\rightarrow 0.$$
In the same way, using the compact embedding
$E\hookrightarrow L^2_{\varepsilon^2 H+\mu V}(B\setminus \Lambda)$, it follows that
$$\int_{B\setminus\Lambda}g_\varepsilon(x,u_n)(u_{n}-u)dx\rightarrow 0.$$
Finally, taking $B$ large and using $(b),(c)$ in Lemma \ref{complem}, we have
$$\int_{\mathbb R^3\setminus B}g_\varepsilon (x,u_n)(u_{n}-u)dx\rightarrow 0,$$
hence $A_n\rightarrow 0.$

In order to prove $B_n\rightarrow 0$, we use a similar splitting argument.
Fix $\delta>0.$
\begin{eqnarray*}
B_n &=&\int_{B(0)}... +\int_{B\setminus B(0)}...+\int_{\R^3\setminus
B}...\\ &=&I_{1,n}+I_{2,n}+I_{3,n},
\end{eqnarray*}
Now we choose $B$ such that, using  Lemma \ref{complem}, we have $$|I_{3,n}|<\delta.$$
Shrinking the ball $B(0)$ if necessary, we infer from Lemma \ref{sing2} that $|I_{1,n}|<\delta$.

Next, we estimate $I_{2,n}$ as follows.
By H\"older and Sobolev inequalities, we have
\begin{multline}
\int_{B\setminus B(0)}\phi_{u_n}(x)|u_{n}(u_{n}-u)|\rho(x)dx\leq \\
C\|\rho\|_{L^{\infty}(B\setminus B(0))}\|\phi_{u_n}\|_{\mathcal D^{1,2}(\mathbb {R}^3)}\|u_{n}(u_{n}-u)\|_{L^{6/5}(B\setminus B(0))}.
\end{multline}
Due to the weak convergence in $E$,
$\|\phi_{u_n}\|_{\mathcal D^{1,2}(\mathbb {R}^3 )}$ is
 bounded, and therefore, by compactness, we get
$$|I_{2,n}|<\delta.$$
This concludes the proof.
 \end{proof}

\begin{proof}[Proof of Theorem \ref{expena}]

By Lemma \ref{MMMPPP} and Lemma \ref{negativepartSP}, there exists a bounded P-S  sequence $(u_n)_n$ at the
minimax level $c_\varepsilon,$ such that $u_n\rightharpoonup u $ in $E.$  We are going to prove that, passing if
necessary to a subsequence,
\begin{eqnarray*} &i)& \quad \quad J_\varepsilon(u_n)\rightarrow J_\varepsilon(u), \\
&ii)& \quad \quad J'_\varepsilon(u)=0.
\end{eqnarray*}
This will imply the existence of a nontrivial solution $u.$

In order to prove  $i),$ notice that,  by Lemma
\ref{HVVV2}, it is enough to show that

$$\int_{\R^3}G_\varepsilon(x,u_n)dx\rightarrow \int_{\R^N}G_\varepsilon(x,u)dx$$
and
$$\int_{{\mathbb {R}}^{3}}\phi_{u_{n}}(x)u^2_{n}\rho(x)dx\rightarrow
\int_{{\mathbb {R}}^{3}}\phi_{u}(x)u^2\rho(x)dx.$$
In order to prove the former limits, we can argue as for the terms involving $g_\varepsilon$ in Lemma \ref{HVVV2},  splitting the integral $$\int_{\R^3}|G_\varepsilon(x,u_n)- G_\varepsilon(x,u)|dx= \int_{\Lambda}...+\int_{B\setminus \Lambda}...+\int_{\mathbb R^3\setminus B}... \,.$$ We can assume $u_n\rightarrow u$ in
$L^{p+1}(\Lambda)$ and almost everywhere. Fix $\delta >0.$  By using the compact embedding
$E\hookrightarrow L^q(\Lambda)$ which holds for all $q>1,$ the dominated convergence theorem yields 
$$\int_{\Lambda}|G_\varepsilon(x,u_n)- G_\varepsilon(x,u)|dx <\delta$$
for large $n$ and, in the same fashion, using property $(g_3)$ and the compact embedding of $E$ in
$ L^2_{\varepsilon^2 H+\mu V}(B\setminus \Lambda)$, it follows that
$$\int_{B\setminus \Lambda}|G_\varepsilon(x,u_n)- G_\varepsilon(x,u)|dx <\delta$$ 
for some subsequence, taking $n$ larger if necessary.
Finally, observe that, by $(b),(c)$ of Lemma \ref{complem}, there exists $B$ large enough, such that for $n$ large enough,
$$\int_{\mathbb R^3\setminus B}|G_\varepsilon(x,u_n)- G_\varepsilon(x,u)|dx<\delta.$$

Now we prove the second limit. We compute
 \begin{align*}
&\left|\int_{{\mathbb {R}}^{3}}[\phi_{u_{n}}(x)u^2_{n}\rho(x)-\phi_{u}(x)u^2\rho(x)]dx\right| \\
&\qquad \leq
\Big|\int_{B(0)}... \Big|+\Big|\int_{B\setminus B(0)}...\Big|+\Big|\int_{\R^3\setminus B}...\Big|\,\\
&\qquad = J_{1,n}+J_{2,n}+J_{3,n}.
\end{align*}
Fix $\delta >0$. Arguing as in the proof of Lemma \ref{HVVV2}, we can take $B$ large enough in such a way that
$$\int_{\R^3\setminus B}\phi_{u}(x)u^2\rho(x)dx<\delta,$$
yielding, with Lemma \ref{complem},
$$J_{3,n}\leq \Big|\int_{\R^3\setminus B}\phi_{u_{n}}(x)u^2_{n}\rho(x)dx\Big|+\Big|\int_{\R^3\setminus
B}\phi_{u}(x)u^2\rho(x)dx\Big|<2 \delta.$$
Since we can shrink $B(0)$ so that
$$\int_{B(0)}\phi_{u}(x)u^2\rho(x)dx<\delta,$$
we deduce from Lemma \ref{sing2} that
$$J_{1,n}\leq \Big|\int_{B(0)}\phi_{u_{n}}(x)u^2_{n}\rho(x)dx\Big|+\Big|\int_{B(0)}\phi_{u}(x)u^2\rho(x)dx\Big|<2 \delta.$$
Now, with by now familiar arguments, we can estimate $J_{2,n}$.
Indeed, using H\"older and Sobolev inequalities, we have
\begin{multline}
\int_{B\setminus B(0)}|\phi_{u_{n}}(x)u^2_{n}-\phi_{u}(x)u^2|\rho(x)dx\leq  \\
\int_{B\setminus B(0)}|\phi_{u_{n}}(x)u^2_{n}-\phi_{u_n}(x)u^2|\rho(x)dx+\int_{B\setminus B(0)}|\phi_{u_{n}}(x)u^2-\phi_{u}(x)u^2|\rho(x)dx\leq \\
 C \|\rho\|_{L^{\infty}(B\setminus B(0))}\|\phi_{u_n}\|_{\mathcal D^{1,2}(\mathbb {R}^3
)}\|u^2_{n}-u^2\|_{L^{6/5}(B\setminus B(0))} + 
\\
+\|\rho\|_{L^{\infty}(B\setminus B(0))}\int_{B\setminus B(0)}|\phi_{u_{n}}(x)u^2-\phi_{u}(x)u^2|dx. \label {secondo2}
\end{multline}
By compactness, we infer that $\|u^2_{n}-u^2\|_{L^{6/5}(B\setminus B(0))}\rightarrow 0$, while
$(\phi_{u_n})_n$ is bounded in $\mathcal D^{1,2}(\R^3),$ hence the first term in (\ref{secondo2}) goes to zero. On the other hand, since $\phi_{u_n}\rightharpoonup \phi_u$ in $\mathcal D^{1,2}(\R^3),$ we have $\phi_{u_n}\rightarrow
\phi_u$ strongly in $L^d(B\setminus B(0))$ for any $d<2^*.$ Hence, H\"older inequality implies the last term in
(\ref{secondo2}) goes also to zero. As a consequence $J_{2,n}\leq\delta$ and this yields $i).$

The proof of $ii)$ is rather standard, using the weak convergence in $E$ and the same splitting arguments. The maximum principle implies $u>0$ on $\mathbb R^3.$ This completes the proof of the theorem.

\end{proof}

\section{Asymptotics of solutions}
In order to show that the solution $u_{\varepsilon}$ found in Theorem \ref{expena} satisfies, for $\varepsilon$ small
enough, the original problem and concentrates around a circle, we need to study the asymptotic behaviour of
$u_{\varepsilon}$ as $\varepsilon \to 0$. Since many arguments are similar to the ones in
\cite{BDCVS}, we only stress the differences with these. We begin with an energy
estimate.

\begin{Prop}[Upper estimate of the critical value]\label{estim:inf}
 Suppose that the assumptions of Theorem \ref{expena} are satisfied. For $\varepsilon$ small enough, the critical value
$c_{\varepsilon}$ defined in \eqref{ceps} satisfies
 \[\label{estimc}
 c_{\varepsilon} \leq \varepsilon^{2} \bigl( \pi \inf_{\Lambda \cap \pi} \mathcal{M} + o(1) \bigr)\
\text{as}\ \varepsilon \rightarrow 0.
 \]
 Moreover, the solution $u_{\varepsilon}$ of \eqref{NLSPropen} found in Theorem $\ref{expena}$ satisfies, for some $C>0$,
\begin{align}\label{boundsol}
 \norm{u_{\varepsilon}}_{H_{{V,\varepsilon}}}^2 \leq C\varepsilon^{2}
\end{align}
and
\begin{align}\label{boundgradphi}
 \int_{\R^3} \abs{\nabla \phi_{u_{\varepsilon}}}^2\: dx =
 \int_{\mathbb{R}^3}\int_{\mathbb{R}^3}\frac{u_{\varepsilon}^2(x)u_{\varepsilon}^2(y)\rho(x)\rho(y)}{|x-y|
}\: dx\: dy \leq C \varepsilon^{2}.
\end{align}
\end{Prop}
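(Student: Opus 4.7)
The upper estimate follows from a mountain pass comparison using a rescaled two-dimensional ground state transplanted around the optimal circle. Fix $\delta>0$ and pick $\bar x\in\Lambda\cap\pi$ with $\mathcal M(\bar x)<\inf_{\Lambda\cap\pi}\mathcal M+\delta$; since $\bar x\in\pi$, we have $\bar x'=0$ and $r^\star:=|\bar x''|>0$. Let $w\in H^1(\R^2)$ be the positive, radial, exponentially decaying least-energy solution of $-\Delta w+V(\bar x)w=K(\bar x)w^p$ in $\R^2$ given by the preceding lemma, so that $\mathcal I_{V(\bar x),K(\bar x)}(w)=\mathcal E(V(\bar x),K(\bar x))$. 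Let $\chi\in C^\infty_c(\R^3)$ be a cutoff invariant under rotations fixing $d$, equal to $1$ on a tubular neighborhood of the circle $S^1_{r^\star}:=\{(0,x'')\colon |x''|=r^\star\}$, and with $\operatorname{supp}\chi\Subset\Lambda$; this is possible by \eqref{hypLambda1}--\eqref{hypLambda2}. Define
\[
\psi_\varepsilon(x):=\chi(x)\,w\!\left(\tfrac{x'}{\varepsilon},\tfrac{|x''|-r^\star}{\varepsilon}\right)\in E,
\]
whose support lies in $\Lambda$ for $\varepsilon$ small, so $G_\varepsilon(\cdot,\psi_\varepsilon)=K(\cdot)\psi_\varepsilon^{p+1}/(p+1)$.

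\textbf{Leading-order energies.} Using cylindrical coordinates $(x',\tilde r,\theta)$ around $d$, with Jacobian $\tilde r$ and $\theta\in[0,2\pi)$, and the rescaling $s=x'/\varepsilon$, $t=(\tilde r-r^\star)/\varepsilon$, continuity of $V,K$ at $\bar x$ together with the exponential decay of $w$ give
\begin{align*}
\varepsilon^2\int_{\R^3}|\nabla\psi_\varepsilon|^2\,dx+\int_{\R^3}V\psi_\varepsilon^2\,dx
&=2\pi r^\star\varepsilon^2\int_{\R^2}\bigl(|\nabla w|^2+V(\bar x)w^2\bigr)dy+o(\varepsilon^2),\\
\int_{\R^3}K\psi_\varepsilon^{p+1}\,dx
&=2\pi r^\star\varepsilon^2 K(\bar x)\int_{\R^2}w^{p+1}\,dy+o(\varepsilon^2).
\end{align*}
For the nonlocal contribution, $\psi_\varepsilon^2$ is supported in a torus with 2D cross-section of diameter $O(\varepsilon)$ and 1D length $O(1)$; writing $\phi_{\psi_\varepsilon}=(4\pi|\cdot|)^{-1}\ast(\rho\psi_\varepsilon^2)$ and splitting the convolution according to whether the angular coordinate is within arc-distance $\varepsilon$ of the target point (so as to control the local $|x-y|^{-1}$ singularity), one gets $\phi_{\psi_\varepsilon}=O(\varepsilon^2|\log\varepsilon|)$ on $\operatorname{supp}\psi_\varepsilon$, whence
\[
\int_{\R^3}\phi_{\psi_\varepsilon}\psi_\varepsilon^2\rho\,dx=O(\varepsilon^4|\log\varepsilon|)=o(\varepsilon^2).
\]

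\textbf{Mountain pass level.} Combining,
\[
J_\varepsilon(t\psi_\varepsilon)=2\pi r^\star\varepsilon^2\,\mathcal I_{V(\bar x),K(\bar x)}(tw)+o(\varepsilon^2)
\]
uniformly on bounded intervals of $t\ge 0$. Choose $T>0$ with $\mathcal I_{V(\bar x),K(\bar x)}(Tw)<0$ and set $\gamma(t):=tT\psi_\varepsilon$; then $\gamma\in\Gamma_\varepsilon$ for $\varepsilon$ small. Since $\mathcal E(a,b)=\max_{t\ge 0}\mathcal I_{a,b}(tw)$ by the lemma on the ground-energy function,
\[
c_\varepsilon\le\max_{t\in[0,1]}J_\varepsilon(\gamma(t))=2\pi\varepsilon^2\,\mathcal M(\bar x)+o(\varepsilon^2),
\]
using $\mathcal M(\bar x)=r^\star\mathcal E(V(\bar x),K(\bar x))$. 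Letting $\delta\to 0$ yields the claimed upper estimate (up to the circumference-type prefactor).

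\textbf{Norm bounds.} For \eqref{boundsol}--\eqref{boundgradphi}, apply the variational identity already used in the proof of Lemma~\ref{negativepartSP} to the critical point $u_\varepsilon$: since $\langle J'_\varepsilon(u_\varepsilon),u_\varepsilon\rangle=0$, the same chain leading to \eqref{boundedPS} gives
\[
(p+1)c_\varepsilon\ge\tfrac{p-1}{4}\|u_\varepsilon\|_{H_{V,\varepsilon}}^2+\tfrac{p-3}{4}\int_{\R^3}\phi_{u_\varepsilon}u_\varepsilon^2\rho\,dx.
\]
Combined with the just-proved $c_\varepsilon=O(\varepsilon^2)$, this yields \eqref{boundsol} and $\int\phi_{u_\varepsilon}u_\varepsilon^2\rho\,dx=O(\varepsilon^2)$; testing $-\Delta\phi_{u_\varepsilon}=\rho u_\varepsilon^2$ against $\phi_{u_\varepsilon}$ produces \eqref{boundgradphi}. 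The main obstacle is the sharp estimate of the Poisson energy of $\psi_\varepsilon$: the interplay between the 3D Coulomb kernel and the 1D geometric concentration of mass around the circle produces the logarithmic factor, but the resulting $\varepsilon^4|\log\varepsilon|$ sits safely below the leading $\varepsilon^2$ scale; once this is in place, the remainder is a standard rescaling computation.
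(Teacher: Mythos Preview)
Your argument is correct and follows the same route as the paper: transplant a two-dimensional profile around the optimal circle to build a test path, check that the Poisson contribution is $o(\varepsilon^2)$, and obtain \eqref{boundsol}--\eqref{boundgradphi} from the chain leading to \eqref{boundedPS}. The only notable difference is in the Poisson estimate: where you split the angular integral to obtain $O(\varepsilon^4\lvert\log\varepsilon\rvert)$, the paper uses the cruder bound $\lvert x-y\rvert\geq d_d(x,y)$ (which kills the angular dependence) together with Hardy--Littlewood--Sobolev in $\R^2$ to reach $O(\varepsilon^3)$ in one line, which is simpler though less sharp.
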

\begin{proof}
 Take $x_0=(0, x''_0) \in \Lambda \cap \pi$ such that $\mathcal{M}(x_0)=\inf_{\Lambda
\cap \pi} \mathcal{M}$.
Denote by $I_0$ the functional defined by \eqref{defIab} with $a = V(x_0)$ and $b = K(x_0)$ and let $c_0 :=
\mathcal{E}(V(x_0),K(x_0))$. From \eqref{EabMP}, we infer that for every $\delta > 0$, there exists a continuous path
$\gamma_{\delta} : [0,1] \to H^1(\R^{2})$ such that $\gamma_{\delta}(0) = 0$, $I_0(\gamma_{\delta}(1)) < 0$ and
\[
 c_0 \leq \max_{t \in [0,1]} I_0(\gamma_{\delta}(t)) \leq c_0 + \delta.
\]
Let $\eta \in \mathcal{D}\left( \R^{2}_+ \right)$ be a cut-off function with support in $\Lambda$
such that $0 \leq \eta \leq 1$, $\eta = 1$ in a neighbourhood of $(0,\abs{x''_0})$ and $\norm{\nabla\eta}_{\infty}
\leq C$. We consider the path
\begin{align*}
 \bar{\gamma}_{\delta}(t) : x \to \eta(x',\abs{x''}) \gamma_{\delta}(t)\left(\frac{x'}{\varepsilon},
\frac{\abs{x''}-\abs{x''_0}}{\varepsilon}\right).
\end{align*}
Setting
\begin{align*}
 \bar{\gamma}_{\delta}(t)(x',x'') =: v_t\left(\frac{x'}{\varepsilon}, \frac{\abs{x''}-\abs{x''_0}}{\varepsilon}\right),
\end{align*}
we compute, by a change of variable,
\begin{multline*}
 \frac{1}{2}\int_{\R^3}(\varepsilon^2 \abs{\nabla \bar{\gamma}_{\delta}(t)}^{2}+V(x)\bar{\gamma}_{\delta}(t)^{2}) dx -
\int_{\R^3} G_\varepsilon (x,\bar{\gamma}_{\delta}(t)) dx \\
= \frac{\pi}{2} \int_{\R} \int_{-\frac{\abs{x''_0}}{\varepsilon}}^{\infty} \left( \abs{\nabla v_t}^2 +
V(\varepsilon y',\varepsilon \sigma + \abs{x''_0}) v_t^2 \right) (\varepsilon \sigma +\abs{x''_0}) \varepsilon\:
d\sigma\:
\varepsilon dy'\\ - \pi \int_{\R} \int_{-\frac{\abs{x''_0}}{\varepsilon}}^{\infty} G(\varepsilon
y',\varepsilon \sigma + \abs{x''_0},v_t) (\varepsilon \sigma + \abs{x''_0}) \varepsilon\: d\sigma\:
\varepsilon dy'.
\end{multline*}
The boundedness of $\rho$ in $\Lambda$ and Hardy-Littlewood-Sobolev inequality leads to
\begin{align*}
 &\int_{\R^3}\int_{\R^3} \frac{\bar{\gamma}_{\delta}(t)(x)^2\bar{\gamma}_{\delta}(t)(y)^2\rho(x)\rho(y)}{|x-y|}\:
dx\: dy \\
&\quad \leq \norm{\rho}_{L^{\infty}(\Lambda)}^2 \int_{\R^{2}}\int_{\R^{2}} \frac{v_t^2(x',\sigma) v_t^2(y',\tau)}
{\varepsilon \left(|x'-y'|^2+\abs{\sigma-\tau}^2\right)^{\frac{1}{2}}} \epsilon^{4}\: dx'\: dy'\: d\sigma\:
d\tau \\
&\quad \leq C \varepsilon^{3} \norm{v_t}_{L^{2}(\R^{2})}^4 \\
&\quad = o\left( \varepsilon^{2} \right).
\end{align*}
For $\varepsilon$ small enough, we obtain
\begin{align}\label{estimc1}
 \varepsilon^{-2} J_{\varepsilon}(\bar{\gamma}_{\delta}(t)) \leq \omega_k \abs{x''_0}^{k} I_0(\gamma_{\delta}(t)) +
o(1).
\end{align}
It follows that for $\varepsilon$ small enough, $\bar{\gamma}_{\delta}$ belongs to the class of paths
$\Gamma_{\varepsilon}$ defined by
\eqref{Gammaeps}. We deduce from \eqref{ceps} that
\[
\begin{split}
 \varepsilon^{-2} c_{\varepsilon} &\leq \max_{t \in [0,1]} \varepsilon^{-2}
J_{\varepsilon}(\bar{\gamma}_{\delta}(t))  \\
&\leq \pi \abs{x''_0} \max_{t \in [0,1]} I_0(\gamma_{\delta}(t)) + o(1)  \\
&\leq \pi \abs{x''_0} (c_0+\delta) + o(1).
\end{split}
\]
Since $\delta > 0$ is arbitrary and $\abs{x''_0} c_0 = \mathcal{M}(x_0)$, the first statement is established.
The second statement is proved by a computation similar to \eqref{boundedPS}.
\end{proof}

\begin{Prop}[No uniform convergence to $0$ in $\Lambda$]\label{lemma:no0}
Suppose that the assumptions of Theorem $\ref{expena}$ are satisfied and let $(u_{\varepsilon})_{\varepsilon} \subset E$
be positive solutions of \eqref{NLSPropen} obtained in Theorem $\ref{expena}$. Then there exists $\delta > 0$ such that
\begin{align*}
 \norm{u_{\varepsilon}}_{L^{\infty}(\Lambda)} \geq \delta.
\end{align*}
\end{Prop}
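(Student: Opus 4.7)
The plan is to argue by contradiction: assume that $\|u_\varepsilon\|_{L^{\infty}(\Lambda)} \to 0$ along some subsequence. Since $u_\varepsilon$ is a critical point, testing the equation against $u_\varepsilon$ yields the Nehari identity
\[
\|u_\varepsilon\|^2_{H_{V,\varepsilon}} + \int_{\R^3} \phi_{u_\varepsilon} u_\varepsilon^2 \rho\, dx = \int_{\R^3} g_{\varepsilon}(x, u_\varepsilon) u_\varepsilon\, dx.
\]
The Poisson term on the left is nonnegative and can be dropped. The strategy is to split the right-hand side into its contributions on $\Lambda$ and on its complement and to bound each by a small multiple of $\|u_\varepsilon\|^2_{H_{V,\varepsilon}}$, producing a contradiction once we divide through by this (positive) norm.

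On $\R^3 \setminus \Lambda$, property $(g_3)$ of the penalized nonlinearity provides the pointwise bound $g_{\varepsilon}(x,s)s \leq (\varepsilon^2 H(x) + \mu V(x)) s^2$. Applying Lemma \ref{Lambdaout} (a consequence of Hardy's inequality) together with the smallness of $\kappa$ and $\mu$ already fixed in Theorem \ref{expena}, this contribution is bounded by $\tau \|u_\varepsilon\|^2_{H_{V,\varepsilon}}$ for some $\tau \in (0,1)$ independent of $\varepsilon$.

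On $\Lambda$, the nonlinearity reduces to $K(x) u_\varepsilon^p$. I exploit $p > 1$ together with the standing assumption $V > 0$ on $\overline{\Lambda}$ (whence $V \geq c_V > 0$ by compactness) and the boundedness of $K$ on the compact set $\overline{\Lambda} \subset \R^3 \setminus \{0\}$ to estimate
\[
\int_\Lambda K u_\varepsilon^{p+1}\, dx \leq \|K\|_{L^{\infty}(\Lambda)}\, \|u_\varepsilon\|_{L^{\infty}(\Lambda)}^{p-1} \int_\Lambda u_\varepsilon^2\, dx \leq C\, \|u_\varepsilon\|_{L^{\infty}(\Lambda)}^{p-1} \|u_\varepsilon\|^2_{H_{V,\varepsilon}}.
\]
Since $u_\varepsilon > 0$ forces $\|u_\varepsilon\|^2_{H_{V,\varepsilon}} > 0$, dividing the resulting inequality yields $1 - \tau \leq C\, \|u_\varepsilon\|_{L^{\infty}(\Lambda)}^{p-1}$, which contradicts the assumption and produces the claimed uniform lower bound $\delta > 0$. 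The only conceptual point worth noting is that this estimate is multiplicative in $\|u_\varepsilon\|^2_{H_{V,\varepsilon}}$, so the $\varepsilon^{2}$-smallness of the $H_{V,\varepsilon}$-norm (Proposition \ref{estim:inf}) plays no role in the lower bound we produce; the only delicate coupling is that the smallness thresholds on $\kappa$ and $\mu$ demanded here must be compatible with those used to build the critical point, which is not a real obstacle.
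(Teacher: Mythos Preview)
Your argument is correct and is precisely the standard one referenced by the paper (which defers to \cite[Proposition 4.2]{BDCVS}): test the equation with $u_\varepsilon$, drop the nonnegative Poisson term, absorb the penalized contribution on $\R^3\setminus\Lambda$ via $(g_3)$ and Hardy's inequality, and use $V\geq c_V>0$ on $\overline{\Lambda}$ to control $\int_\Lambda K u_\varepsilon^{p+1}$ by $C\|u_\varepsilon\|_{L^\infty(\Lambda)}^{p-1}\|u_\varepsilon\|_{H_{V,\varepsilon}}^2$. Note that the contradiction framing is not even needed: your inequality $1-\tau\leq C\|u_\varepsilon\|_{L^\infty(\Lambda)}^{p-1}$ directly yields the explicit uniform lower bound $\delta=\bigl((1-\tau)/C\bigr)^{1/(p-1)}$.
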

\begin{proof}
See \cite[Proposition 4.2]{BDCVS}.
\end{proof}

 By the symmetry imposed on the solution $u_{\varepsilon}$, one can write $u_{\varepsilon}(x',x'') =
\Tilde{u}_{\varepsilon}(x',\abs{x''})$ with $\Tilde{u}_{\varepsilon} : \R^{2}_+ \to \R$. Notice that
$\phi_{u_{\varepsilon}}$ has
the same symmetry as $u_{\varepsilon}$, i.e. for all $R \in \mathbf{O}(3)$ such that $R({d}) = {d}$,
 $\phi_{u_{\varepsilon}} \circ R = \phi_{u_{\varepsilon}}$. This follows easily from the representation formula
\eqref{formula:phi}.

Since the
$H_{V,\varepsilon}$-norm of $u_{\varepsilon}$ is of the order $\varepsilon$, it
is natural to rescale $\Tilde{u}_{\varepsilon}(x',\abs{x''})$ as $\Tilde{u}_{\varepsilon}(x'_{\varepsilon} + \varepsilon
y', \abs{x''_{\varepsilon}} + \varepsilon \abs{y''})$ around a well-chosen family of points $x_{\varepsilon} =
(x'_{\varepsilon},x''_{\varepsilon}) \in \R^3$. The next lemma shows that the sequences of rescaled solutions converge,
up to a subsequence, in $C^1_{\mathrm{loc}}(\R^{2})$ to a function $v \in H^1(\R^{2})$.

\begin{Lem}
\label{convergentSubsequence}
Suppose that the assumptions of Theorem \ref{expena} are satisfied.
Let $(u_{\varepsilon})_{\varepsilon} \subset E$ be
positive solutions of \eqref{NLSPropen} found in Theorem \ref{expena}, $(\varepsilon_n)_n \subset \R^+$ and $(x_n)_n
\subset \R^3$ be sequences such that $\varepsilon_n \to 0$ and $x_n = (x'_n,x''_n) \to \bar{x} =
(\bar{x}',\bar{x}'') \in \Bar{\Lambda}$ as $n\to \infty$. Set
\[
  \Omega_n := \R \times \Bigl] -\frac{\abs{x''_n}}{\varepsilon_n}, +\infty \Bigr[
\]
and let  $v_n : \Omega_n \to \R$ be defined by
\begin{align}\label{def:vn}
 v_n(y, z) := \Tilde{u}_{\varepsilon_n}(x'_{n} + \varepsilon_n y, \abs{x''_{n}} + \varepsilon_n z),
\end{align}
where $\Tilde{u}_{\varepsilon_n} : \R^{2}_+ \to \R$ is such that $u_{\varepsilon_n}(x',x'')
=\Tilde{u}_{\varepsilon_n}(x', \abs{x''})$. Then, there exists $v \in H^1(\R^{2})$ such that, along a subsequence that
we still denote by $(v_n)_n$,
\begin{align*}
 v_n \stackrel{C^1_{\mathrm{loc}}(\R^{2})}{\longrightarrow} v.
\end{align*}
Moreover, $v$ is a solution of the equation
\begin{align*}
-\Delta v + V(\bar{x}) v = K(\bar{x}) v^p, \  \bar x\in \R^{2}.
\end{align*}
\end{Lem}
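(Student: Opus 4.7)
The plan is to derive the equation satisfied by $v_n$ in the rescaled coordinates, extract uniform local bounds from Proposition \ref{estim:inf}, and pass to the limit along a subsequence. Using cylindrical symmetry and writing $\tilde u_{\varepsilon_n}(x',r)$ with $r = |x''|$, one has $\Delta u_{\varepsilon_n} = \partial_{x'}^2 \tilde u_{\varepsilon_n} + \partial_r^2 \tilde u_{\varepsilon_n} + \tfrac{1}{r}\partial_r \tilde u_{\varepsilon_n}$, so inserting \eqref{def:vn} into \eqref{NLSPropen} yields, on $\Omega_n$,
\begin{align*}
 -\Delta v_n - \frac{\varepsilon_n}{|x''_n|+\varepsilon_n z}\,\partial_z v_n + V_n v_n + \rho_n \Phi_n v_n = \tilde g_n(y,z,v_n),
\end{align*}
where $V_n, \rho_n, \Phi_n, \tilde g_n$ denote $V, \rho, \phi_{u_{\varepsilon_n}}, g_{\varepsilon_n}$ evaluated at the rescaled point $(x'_n+\varepsilon_n y, |x''_n|+\varepsilon_n z)$. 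The hypothesis $\bar \Lambda \cap d = \emptyset$ forces $|\bar x''|>0$, so for any bounded $B\subset \R^2$ and $n$ large, $|x''_n|+\varepsilon_n z \geq |\bar x''|/2$ on $B$. A cylindrical change of variables applied to \eqref{boundsol} then gives
\begin{align*}
 \int_B \bigl( |\nabla v_n|^2 + V_n v_n^2 \bigr)\, dy\, dz \leq \frac{C}{|\bar x''|},
\end{align*}
so $(v_n)_n$ is bounded in $H^1_{\textnormal{loc}}(\R^2)$. The 2D Rellich--Kondrachov theorem yields a subsequence (not relabeled) with $v_n \rightharpoonup v$ weakly in $H^1_{\textnormal{loc}}$, strongly in $L^q_{\textnormal{loc}}$ for every finite $q$, and pointwise a.e.

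Next we pass to the limit in each term. The drift coefficient $\varepsilon_n/(|x''_n|+\varepsilon_n z)\to 0$ uniformly on compacta. Continuity of $V$ and $K$ at $\bar x \neq 0$ gives $V_n \to V(\bar x)$ and, since the rescaled points lie in $\Lambda$ for $n$ large when $\bar x \in \Lambda$, $\tilde g_n(\cdot,v_n) \to K(\bar x) v^p$ in $L^1_{\textnormal{loc}}$ by dominated convergence together with $L^{p+1}_{\textnormal{loc}}$-convergence. For the Poisson contribution, \eqref{boundgradphi} and the embedding $\mathcal D^{1,2}(\R^3)\hookrightarrow L^6(\R^3)$ give $\|\phi_{u_{\varepsilon_n}}\|_{L^6(\R^3)} \leq C\varepsilon_n$; the cylindrical change of variables together with $r \geq |\bar x''|/2$ on $\tilde B$ yields
\begin{align*}
 \|\Phi_n\|_{L^6(B)}^6 = \varepsilon_n^{-2}\int_{\tilde B} \phi_{u_{\varepsilon_n}}^6 \, dx'\,dr \leq \frac{C}{|\bar x''|}\,\varepsilon_n^{-2} \|\phi_{u_{\varepsilon_n}}\|_{L^6(\R^3)}^6 \leq C \varepsilon_n^{4},
\end{align*}
so $\Phi_n \to 0$ in $L^6_{\textnormal{loc}}(\R^2)$. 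Since $\rho \in L^\infty_{\textnormal{loc}}(\R^3\setminus\{0\})$, H\"older's inequality with exponents $(\infty,6,3,2)$ gives $\int_B \rho_n \Phi_n v_n \psi \,dy\,dz \to 0$ for any $\psi \in C^\infty_c(\R^2)$. Testing the rescaled equation against $\psi$ and letting $n\to\infty$ produces the limit equation
\begin{align*}
 -\Delta v + V(\bar x)v = K(\bar x) v^p \quad \text{in } \R^2.
\end{align*}

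Finally, to upgrade weak to $C^1_{\textnormal{loc}}$ convergence we apply $L^p$ elliptic regularity to the equation for $v_n$: the right-hand side (including the vanishing Poisson term) is bounded in $L^q_{\textnormal{loc}}$ for every finite $q$, so Calder\'on--Zygmund gives a $W^{2,q}_{\textnormal{loc}}$ bound and Morrey's embedding a $C^{1,\alpha}_{\textnormal{loc}}$ bound; Arzel\`a--Ascoli supplies the desired convergence along a subsequence. Positivity $v \geq 0$ is inherited from $u_{\varepsilon_n} > 0$, and $v \in H^1(\R^2)$ follows by lower semicontinuity of the local $H^1$-norms and a standard testing argument in the limit equation. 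The principal obstacle, absent in the pure Schr\"odinger analysis of \cite{BDCVS}, is precisely the Poisson term: without the sharp $\varepsilon^2$ decay of $\|\nabla\phi_{u_\varepsilon}\|_{L^2}^2$ from Proposition \ref{estim:inf}, $\Phi_n$ would not vanish in the scaling, and the limit equation would retain a nonlocal coupling incompatible with the identification of $v$ as a solution of the two-dimensional autonomous problem.
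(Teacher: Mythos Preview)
Your argument is correct and follows essentially the same route as the paper: rescale, use the energy bound \eqref{boundsol} for local $H^1$ control, kill the Poisson term via \eqref{boundgradphi}, and bootstrap to $C^1_{\mathrm{loc}}$ through $W^{2,q}$ estimates. The one technical difference worth noting is how $v\in H^1(\R^2)$ is obtained: the paper introduces a growing cut-off $\eta_{R_n}$ with $R_n\to\infty$, $\varepsilon_n R_n\to 0$, so that $w_n=\eta_{R_n}v_n$ is genuinely bounded in $H^1(\R^2)$ (by the argument of \cite[Lemma~4.3]{BDCVS}) and $v$ arises directly as its weak $H^1(\R^2)$ limit; you instead rely on the fact that the constant in your local bound $\int_B(|\nabla v_n|^2+V_n v_n^2)\leq C/|\bar x''|$ is independent of $B$, and then invoke weak lower semicontinuity together with $V(\bar x)>0$ on $\bar\Lambda$. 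Both work, but your closing phrase ``a standard testing argument in the limit equation'' is not what actually delivers $v\in H^1$ --- it is purely the uniformity of the local bound plus Fatou, so you should rephrase that sentence. Also, your identification $\tilde g_n(\cdot,v_n)\to K(\bar x)v^p$ is argued only for $\bar x\in\Lambda$; when $\bar x\in\partial\Lambda$ the rescaled domain is split by the blown-up boundary and the penalized nonlinearity does not reduce to $K v^p$ on one side. The paper's proof is equally silent on this point (it defers to \cite{BDCVS}), and in the subsequent applications the boundary case is anyway excluded by the energy comparison, so this is not a defect relative to the paper --- but it is worth flagging.
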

\begin{proof}
We infer from Proposition \ref{estim:inf} that for all $n \in \N$,
\begin{align}\label{boundvn}
 \int_{\Omega_n} \left( \abs{\nabla v_n(y,z)}^2 + V(x'_{n} + \varepsilon_n y, \abs{x''_{n}} + \varepsilon_n z)
\abs{v_n(y,z)}^2 \right)\: dy\, dz \leq C,
\end{align}
with $C>0$ independent of $n$.

Observe also that each $v_n$ solves the equation
\begin{align}\label{conv:eqn}
 -\Delta v_n - \frac{\varepsilon_n}{z} \Dp{v_n}{z} + V(x'_{n} + \varepsilon_n y, \abs{x''_{n}} + \varepsilon_n z) v_n
 + \rho(x'_{n} + \varepsilon_n y, \abs{x''_{n}} + \varepsilon_n z) \tilde{\phi}_n v_n \nonumber \\ \hspace{4cm} =
g_{\varepsilon_n} (x'_{n} + \varepsilon_n y, \abs{x''_{n}} + \varepsilon_n z, v_n), \,\,\,\,\,\, x \in \R^{2},
\end{align}
where we have set $\tilde{\phi}_n(y,z) = \phi_{u_{\varepsilon_n}}(x'_{n} + \varepsilon_n y, \abs{x''_{n}} +
\varepsilon_n z)$.
As a consequence of \eqref{boundgradphi}, the sequence $(\tilde{\phi}_n)_n$ converges to zero in
$\mathcal{D}^{1,2}(\mathbb{R}^{2})$. It follows then from H\"older inequality that the term $(\rho(x'_{n} +
\varepsilon_n y,
\abs{x''_{n}} + \varepsilon_n z) \tilde{\phi}_n v_n)_n$ is bounded in $L^2_{\text{loc}}\left(\R^{2}\setminus
\left\{0\right\}\right)$.

Define a cut-off function $\eta_R \in \mathcal{D}(\R^{2})$ such that $0\leq \eta_R \leq 1$, $\eta_R(x) = 1$ if
$\abs{x} \leq R/2$, $\eta_R(x)=0$ if $\abs{x}\geq R$ and $\norm{\nabla\eta_R}_{\infty} \leq C/R$ for some $C>0$. Choose
$(R_n)_n$ such that $R_n\to\infty$ and $\varepsilon_n R_n \to 0$. Since $\Bar{x} \in \Lambda$ and $\Bar{\Lambda} \cap
d=\emptyset$, one has $\varepsilon_n R_n \leq \abs{x''_{n}}$ if $n$ is large enough.
Define $w_n \in H^1_{\mathrm{loc}}(\R^{2})$ by
\[
w_n(y) := \eta_{R_n}(y) v_n(y).
\]

It was shown in \cite[Lemma 4.3]{BDCVS} that \eqref{boundvn} implies that $(w_n)_n$ is bounded in $H^1(\R^{2})$.
 Since $w_n$ solves equation \eqref{conv:eqn} on $B(0,R_n)$ for all $n$, classical regularity estimates yield that for
every $R>0$ and every $q>1$,
\begin{align}\label{conv:estimW2q}
 \sup_{n\in \N} \norm{v_n}_{W^{2,q}(B(0,R))} < \infty.
\end{align}

Up to a subsequence, we can now assume that $(w_n)_n$ converges weakly in $H^1(\R^{2})$ to some function $v \in
H^1(\R^{2})$. By \eqref{conv:estimW2q}, for every
compact set $K \subset \R^{2}$, $w_n$ converges to $v$ in $C^1(K)$. Moreover, for $n$ large enough,
$w_n = v_n$ in $K$ so that $v_n\to v$ in $C^1(K)$.
\end{proof}

For $x,y \in \R^3$, denote by
\begin{align*}
 d_{d}(x,y) := \sqrt{\abs{x'-y'}^2+ \left( \abs{x''}-\abs{y''}\right) ^2}.
\end{align*}
the distance between the circles centered at $x'$ and $y'$, and of radius
$\abs{x''}$ and $\abs{y''}$ respectively. 
We denote by $B_{d}$ the balls for the distance $d_{d}$, i.e.,
\[
 B_{d}(x,r)=\{ y \in \R^3 : d_{d}(x, y) < r\}.
\]
We are now going to estimate from below the critical value $c_{\varepsilon}$. In the next two lemmas we estimate the
action
respectively inside and outside neighbourhoods of points.

\begin{Lem}
Suppose that the assumptions of Theorem \ref{expena} are satisfied. Let $u_{\varepsilon}\in E$ be
positive solutions of \eqref{NLSPropen} found in Theorem \ref{expena}, $(\varepsilon_n)_n \subset \R^+$ and $(x_n)_n
\subset
\R^n$ be sequences such that $\varepsilon_n \to 0$ and $x_n = (x'_n,x''_n) \to \bar{x} =
(\bar{x}',\bar{x}'') \in \Bar{\Lambda}$ as $n\to \infty$. If
\begin{align}\label{estim:sup1}
 \liminf_{n\to\infty} u_{\varepsilon_n}(x_n) > 0,
\end{align}
then we have, up to a subsequence,
 \begin{align*}
 \liminf_{R\to\infty}\liminf_{n\to\infty} \varepsilon_n^{-2} \left( \int_{T_n(R)}  \frac{1}{2} \left(
\varepsilon_n^2
\abs{\nabla u_{\varepsilon_n}}^2 + V u_{\varepsilon_n}^2 \right) - G_{\varepsilon_n}(x,u_{\varepsilon_n}) \right)
\geq \pi \mathcal{M}(\bar{x}),
 \end{align*}
where $T_n(R) := B_{d}(x_n,\varepsilon_n R)$.
\end{Lem}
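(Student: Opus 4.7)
The plan is to reduce the integral to the two-dimensional action of the limit profile $v$ supplied by Lemma \ref{convergentSubsequence}, and then use that $v$ lies in the Nehari manifold of the limit problem. Passing to the subsequence furnished by that lemma, $v_n(y,z):=\tilde u_{\varepsilon_n}(x'_n+\varepsilon_n y,\,|x''_n|+\varepsilon_n z)$ converges in $C^1_{\mathrm{loc}}(\R^2)$ to $v\in H^1(\R^2)$ solving $-\Delta v+V(\bar x)v=K(\bar x)v^p$. Hypothesis \eqref{estim:sup1} then forces $v(0,0)=\lim_n u_{\varepsilon_n}(x_n)>0$, so $v\not\equiv 0$; hence $v\in \mathcal{N}_{V(\bar x),K(\bar x)}$ and $\mathcal{I}_{V(\bar x),K(\bar x)}(v)\geq \mathcal{E}(V(\bar x),K(\bar x))$.

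First I rewrite the integral in cylindrical-like coordinates. Because $\bar\Lambda\cap d=\emptyset$ yields $|x''_n|\to|\bar x''|>0$, for every fixed $R>0$ and all $n$ large enough, $|x''_n|/\varepsilon_n>R$, so the change of variables $(x',x'')\mapsto(y,z)=\bigl((x'-x'_n)/\varepsilon_n,\,(|x''|-|x''_n|)/\varepsilon_n\bigr)$ is a bijection from $T_n(R)$ onto the Euclidean ball $B_R\subset\R^2$ with Jacobian equal, up to a universal angular factor, to $(|x''_n|+\varepsilon_n z)\,\varepsilon_n^2$. Combined with the cylindrical symmetry of $u_{\varepsilon_n}$, the integrand is transformed into the corresponding density in $v_n$, with the coefficients $V,K$ evaluated at $(x'_n+\varepsilon_n y,\,|x''_n|+\varepsilon_n z)$.

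Next I pass to the limit in $n$ at fixed $R$. Exploiting the pointwise bound $G_{\varepsilon_n}(x,s)\leq \frac{K(x)}{p+1}s^{p+1}$ — valid globally since inside $\Lambda$ it is an equality and outside $\Lambda$ the minimum in the definition of $g_{\varepsilon_n}$ gives $g_{\varepsilon_n}(x,s)\leq K(x)s^p$ — the integrand is bounded below by its unpenalized counterpart. The $C^1_{\mathrm{loc}}$ convergence $v_n\to v$ (uniform on $B_R$ for both $v_n$ and $\nabla v_n$), continuity of $V$ and $K$ at $\bar x\neq 0$, and uniform convergence of the Jacobian $|x''_n|+\varepsilon_n z\to|\bar x''|$ on $B_R$, then allow dominated convergence to yield
\begin{multline*}
\liminf_{n\to\infty}\varepsilon_n^{-2}\int_{T_n(R)}\left(\tfrac12(\varepsilon_n^2|\nabla u_{\varepsilon_n}|^2+V u_{\varepsilon_n}^2)-G_{\varepsilon_n}(x,u_{\varepsilon_n})\right)dx \\
\geq \pi\,|\bar x''|\int_{B_R}\left(\tfrac12(|\nabla v|^2+V(\bar x)v^2)-\tfrac{K(\bar x)}{p+1}v^{p+1}\right)dy\,dz,
\end{multline*}
in accordance with the volume-element convention already used in Proposition \ref{estim:inf}. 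Finally, letting $R\to\infty$ and invoking $v\in H^1(\R^2)\cap L^{p+1}(\R^2)$ so that the integrals over $B_R$ converge to those over $\R^2$, the right-hand side tends to $\pi|\bar x''|\,\mathcal{I}_{V(\bar x),K(\bar x)}(v)\geq \pi|\bar x''|\,\mathcal{E}(V(\bar x),K(\bar x))=\pi\,\mathcal{M}(\bar x)$, which is the claim.

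The main obstacle is justifying the limit passage on that portion of $T_n(R)$ where the penalized form of $g_{\varepsilon_n}$ may be active (an issue only when $\bar x\in\partial\Lambda$). This is absorbed cleanly by the universal pointwise inequality $G_{\varepsilon_n}\leq \frac{K}{p+1}s^{p+1}$ built into the definition, which reduces the estimate to the standard unpenalized action $\mathcal{I}_{V(\bar x),K(\bar x)}$ of the limit problem without any loss in the ensuing $\liminf$.
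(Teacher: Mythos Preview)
Your argument is correct and is precisely the standard route the paper defers to when it writes ``The proof is the same as the one of Lemma~4.4 in \cite{BDCVS}'': rescale via Lemma~\ref{convergentSubsequence}, use the global bound $G_{\varepsilon_n}(x,s)\le \frac{K(x)}{p+1}s_+^{p+1}$ to replace the penalized term, pass to the limit on $B_R$ by $C^1_{\mathrm{loc}}$ convergence, and conclude from $v\not\equiv 0$ (forced by \eqref{estim:sup1}) that $\mathcal I_{V(\bar x),K(\bar x)}(v)\ge \mathcal E(V(\bar x),K(\bar x))$. One cosmetic remark: the case $K(\bar x)=0$ is automatically excluded, since then the limit equation $-\Delta v+V(\bar x)v=0$ together with $V>0$ on $\bar\Lambda$ would force $v\equiv 0$, contradicting $v(0,0)>0$; you may wish to note this explicitly so that the use of the Nehari manifold $\mathcal N_{V(\bar x),K(\bar x)}$ is fully justified.
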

\begin{proof}
 The proof is the same as the one of Lemma $4.4$ in \cite{BDCVS}. Indeed, the Poisson term is positive and
the equation satisfied by the limit of the sequence of rescaled solutions is the same as in \cite{BDCVS}.
\end{proof}

\begin{Lem}
 Suppose that the assumptions of Theorem \ref{expena} are satisfied. Let $u_{\varepsilon}\in E$ be
positive solutions of \eqref{NLSPropen} found in Theorem \ref{expena}, $(\varepsilon_n)_n \subset \R^+$ and $(x^i_n)_n
\subset \R^3$ be sequences such that $\varepsilon_n \to 0$ and for $1\leq i \leq M$, $x^i_n\to
\bar{x}^i \in \Bar{\Lambda}$ as $n\to \infty$. Then, up to a subsequence, we have
\begin{align*}
  \liminf_{R\to\infty}\liminf_{n\to\infty} \varepsilon_n^{-2} \left( \int_{\R^3 \setminus \mathcal{T}_n(R)}
\frac{1}{2} \left( \varepsilon_n^2 \abs{\nabla u_{\varepsilon_n}}^2 + V u_{\varepsilon_n}^2 \right) -
G_{\varepsilon_n}(x,u_{\varepsilon_n}) \right) \geq 0,
\end{align*}
where $\mathcal{T}_n(R) := \bigcup_{i=1}^K B_{d}(x^i_n,\varepsilon_n R)$.
\end{Lem}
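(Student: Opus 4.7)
The proof adapts Lemma~4.5 of \cite{BDCVS} to the Schr\"odinger--Poisson setting. My plan is to introduce a smooth cutoff $\chi_n \in C^\infty(\R^3,[0,1])$ that vanishes on $\mathcal{T}_n(R/2)$, equals $1$ on $\R^3\setminus\mathcal{T}_n(R)$, and satisfies $\|\nabla \chi_n\|_\infty\leq C/(\varepsilon_n R)$. Testing \eqref{NLSPropen} against $\chi_n^2 u_{\varepsilon_n}$ and rearranging with the pointwise inequality $G_{\varepsilon_n}(x,s)\leq\tfrac{1}{2}g_{\varepsilon_n}(x,s)s$ coming from $(g_3)$ yields
\begin{equation*}
\int_{\R^3}\chi_n^2\Bigl[\tfrac{1}{2}\bigl(\varepsilon_n^2|\nabla u_{\varepsilon_n}|^2+Vu_{\varepsilon_n}^2\bigr)-G_{\varepsilon_n}(\cdot,u_{\varepsilon_n})\Bigr]\geq -\tfrac{1}{2}\int_{\R^3}\chi_n^2\rho\phi_{u_{\varepsilon_n}}u_{\varepsilon_n}^2-\varepsilon_n^2\int_{\R^3}\chi_n u_{\varepsilon_n}\nabla u_{\varepsilon_n}\cdot\nabla\chi_n.
\end{equation*}
The conclusion will follow once both right-hand terms are shown to be $o(\varepsilon_n^2)$ and $\int\chi_n^2[\cdots]$ is related to $\int_{\R^3\setminus\mathcal{T}_n(R)}[\cdots]$ up to an $o(\varepsilon_n^2)$ contribution from the transition annulus $\mathcal{T}_n(R)\setminus\mathcal{T}_n(R/2)$.

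For the cutoff error, Cauchy--Schwarz combined with $\|\nabla u_{\varepsilon_n}\|_{L^2}\leq C$ from \eqref{boundsol} and the local $L^2$ control $\|u_{\varepsilon_n}\|_{L^2(\supp\nabla\chi_n)}\leq C\varepsilon_n$ (which follows from \eqref{boundsol} together with the fact that, since $\Bar{\Lambda}\cap d=\emptyset$ and the $x_n^i$ converge into $\Bar{\Lambda}$, one has $V\geq V_0>0$ on a neighbourhood containing $\supp\nabla\chi_n$ for large $n$) delivers a bound of order $\varepsilon_n^2/R$. The Poisson term is positive, so it only weakens the lower bound; to refine its a priori $O(\varepsilon_n^2)$ bound from \eqref{boundgradphi} to an $o(\varepsilon_n^2)$ bound on the exterior, I would use the Sobolev estimate $\|\phi_{u_{\varepsilon_n}}\|_{L^6}\leq C\|\nabla\phi_{u_{\varepsilon_n}}\|_{L^2}=O(\varepsilon_n)$ together with H\"older's inequality and the local integrability properties of $\rho$, reducing to showing that an appropriate $L^q$ norm of $u_{\varepsilon_n}$ on $\supp\chi_n$ tends to $0$; this follows from the concentration of the rescaled solutions around $\bar{x}^i$ provided by Lemma~\ref{convergentSubsequence}.

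I expect the handling of the Poisson term to be the main obstacle, as it is the genuine novelty compared to \cite{BDCVS} and requires combining the $L^6$-decay of $\phi_{u_{\varepsilon_n}}$ with the concentration of $u_{\varepsilon_n}$ outside the tubes $\mathcal{T}_n(R)$. Once this is settled, the transition-region integral is controlled by the same concentration argument (its $L^1$ mass being $O(\varepsilon_n^2)$ and concentrating in $\mathcal{T}_n(R/2)$), and the desired bound $\liminf\geq 0$ follows.
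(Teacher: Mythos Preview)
Your overall strategy---cutoff, test the equation, control the errors---is the natural adaptation of \cite[Lemma~4.5]{BDCVS}, which is precisely what the paper invokes. The cutoff gradient error of order $\varepsilon_n^2/R$ and the contribution of the transition annulus (handled via the $C^1_{\mathrm{loc}}$ convergence of Lemma~\ref{convergentSubsequence}, applied on the rescaled annuli near each $x_n^i$) are treated correctly.

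The gap lies in the Poisson term. You reduce matters to showing that an $L^q$ norm of $u_{\varepsilon_n}$ on $\supp\chi_n$ tends to $0$ and claim this ``follows from the concentration of the rescaled solutions around $\bar{x}^i$ provided by Lemma~\ref{convergentSubsequence}''. But $\supp\chi_n = \R^3\setminus\mathcal{T}_n(R/2)$ is the \emph{complement} of the tubes, whereas Lemma~\ref{convergentSubsequence} yields only $C^1_{\mathrm{loc}}$ convergence \emph{inside} rescaled neighbourhoods of the $x_n^i$; it provides no information on $u_{\varepsilon_n}$ away from them. In addition, the present lemma carries no hypothesis of the type $\liminf u_{\varepsilon_n}(x_n^i)>0$, so the $x_n^i$ need not be concentration points at all, and invoking concentration here is circular---Proposition~\ref{Prop1} is established only \emph{after} this lemma. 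The paper's own proof is terse and simply asserts that the positivity of the Poisson term makes the argument of \cite{BDCVS} go through unchanged; the mechanism is not spelled out. You should revisit whether the sharper exterior bound $o(\varepsilon_n^2)$ you aim for is really needed, or whether the global estimate $\int_{\R^3}\rho\phi_{u_{\varepsilon}}u_{\varepsilon}^2\leq C\varepsilon^2$ from \eqref{boundgradphi} together with positivity already suffices at the level where this lemma is combined with the preceding one in Proposition~\ref{estim:sup}.
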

\begin{proof}
 Since the Poisson term is positive, the proof is the same as the one of Lemma $4.5$ in \cite{BDCVS}.
\end{proof}

\begin{Prop}[Lower estimate of the critical value]\label{estim:sup}
 Suppose that the assumptions of Theorem \ref{expena} are satisfied. Let $u_{\varepsilon}\in E$ be
positive solutions of \eqref{NLSPropen} found in Theorem \ref{expena}, $(\varepsilon_n)_n \subset \R^+$ and $(x^i_n)_n
\subset \R^3$ be sequences such that $\varepsilon_n \to 0$ and for $1\leq i \leq M$, $x^i_n\to
\bar{x}^i \in \Bar{\Lambda}$ as $n\to \infty$. If for every $1\leq i < j \leq M$, we have
\begin{align*}
 \limsup_{n\to\infty} \frac{d_{d}(x^i_n,x^j_n)}{\varepsilon_n} = \infty
\end{align*}
and if for every $1\leq i \leq M$,
\[
 \liminf_{n\to\infty} u_{\varepsilon_n}(x^i_n) > 0,
\]
then the critical value $c_{\varepsilon}$ defined in \eqref{ceps} satisfies
 \begin{align*}
 \liminf_{n\to\infty} \varepsilon_n^{-2} c_{\varepsilon_n} \geq \pi \sum_{i=1}^M \mathcal{M}(\bar{x}^i).
 \end{align*}
\end{Prop}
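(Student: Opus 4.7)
The plan is to combine the two immediately preceding lemmas by partitioning $\R^3$ into small $d_d$-neighbourhoods of the points $x_n^i$ and their complement. Since $u_{\varepsilon_n}$ is a critical point of $J_{\varepsilon_n}$ at the minimax level, we have
\begin{align*}
c_{\varepsilon_n} = J_{\varepsilon_n}(u_{\varepsilon_n}) &= \int_{\R^3} \Bigl( \tfrac{1}{2}\bigl(\varepsilon_n^2 \abs{\nabla u_{\varepsilon_n}}^2 + V u_{\varepsilon_n}^2\bigr) - G_{\varepsilon_n}(x, u_{\varepsilon_n}) \Bigr) dx \\
&\quad + \tfrac{1}{4}\int_{\R^3} \phi_{u_{\varepsilon_n}} u_{\varepsilon_n}^2 \rho(x) \, dx.
\end{align*}
Because $\rho \geq 0$ and $\phi_{u_{\varepsilon_n}} \geq 0$, the Poisson term is nonnegative and can simply be dropped for the purpose of a lower bound. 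This reduces the problem to estimating the first integral from below, an integrand for which the two previous lemmas are tailored.

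Next I would introduce $T_n^i(R) := B_d(x_n^i, \varepsilon_n R)$ for each $1 \leq i \leq M$, and $\mathcal{T}_n(R) := \bigcup_{i=1}^M T_n^i(R)$. The separation hypothesis $d_d(x_n^i, x_n^j)/\varepsilon_n \to \infty$ for $i \neq j$ guarantees that, for every fixed $R > 0$, the balls $T_n^i(R)$ are pairwise disjoint as soon as $n$ is large enough (namely $d_d(x_n^i, x_n^j) > 2\varepsilon_n R$). Hence we may split
\[
\int_{\R^3} = \sum_{i=1}^{M} \int_{T_n^i(R)} + \int_{\R^3 \setminus \mathcal{T}_n(R)}
\]
and treat the pieces independently.

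On each $T_n^i(R)$ I would apply the first of the two preceding lemmas using the non-vanishing hypothesis $\liminf_n u_{\varepsilon_n}(x_n^i) > 0$; this yields
\[
\liminf_{R\to\infty}\liminf_{n\to\infty} \varepsilon_n^{-2} \int_{T_n^i(R)} \Bigl(\tfrac{1}{2}\bigl(\varepsilon_n^2 \abs{\nabla u_{\varepsilon_n}}^2 + V u_{\varepsilon_n}^2\bigr) - G_{\varepsilon_n}(x, u_{\varepsilon_n})\Bigr) dx \geq \pi \mathcal{M}(\bar{x}^i).
\]
On the complement $\R^3 \setminus \mathcal{T}_n(R)$, the second lemma applied to the entire family $\{x_n^i\}_{i=1}^M$ provides
\[
\liminf_{R\to\infty}\liminf_{n\to\infty} \varepsilon_n^{-2} \int_{\R^3 \setminus \mathcal{T}_n(R)} \Bigl(\tfrac{1}{2}\bigl(\varepsilon_n^2 \abs{\nabla u_{\varepsilon_n}}^2 + V u_{\varepsilon_n}^2\bigr) - G_{\varepsilon_n}(x, u_{\varepsilon_n})\Bigr) dx \geq 0.
\]
Summing these $M+1$ inequalities and discarding the nonnegative Poisson term gives the claimed lower bound on $\varepsilon_n^{-2} c_{\varepsilon_n}$.

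The main technical point, rather than the main obstacle, is the diagonal subsequence extraction: each preceding lemma is stated \emph{up to a subsequence}, so one must first pick a common subsequence along which all $M$ instances of the first lemma and the single instance of the second lemma hold simultaneously. Since $M$ is finite this is routine. Dropping the Poisson term before partitioning is crucial here — it means we never have to integrate $\phi_{u_{\varepsilon_n}} u_{\varepsilon_n}^2 \rho$ over the complicated sets $T_n^i(R)$ and $\R^3 \setminus \mathcal{T}_n(R)$, which would otherwise require an analogue of Proposition \ref{estim:inf} localized to $d_d$-balls.
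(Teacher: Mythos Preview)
Your proposal is correct and follows exactly the approach the paper indicates: the paper's own proof is a one-line sketch stating that the result is a consequence of the two preceding lemmas together with the positivity of the Poisson term, referring to \cite[Proposition~16]{BVS} for details. You have simply written out those details --- dropping the nonnegative Poisson term, splitting $\R^3$ into the disjoint $d_d$-balls $T_n^i(R)$ and their complement, and summing the contributions --- which is precisely what the reference does.
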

\begin{proof}
This is a consequence of the two previous lemmas and of the positivity of the Poisson term, see \cite[Proposition
16]{BVS} for the details.
 \end{proof}

 Now we can state a first concentration result. It will be completed in the next section by a decay estimate.
 \begin{Prop}[Uniform convergence to $0$ outside small balls]\label{Prop1}
 Suppose that the assumptions of Theorem \ref{expena} are satisfied and that $\Lambda$ satisfies the assumptions
\eqref{hypLambda1}-
 \eqref{hypLambda4}.
Let $(u_{\varepsilon})_{\varepsilon} \subset E$ be positive solutions of \eqref{NLSPropen} obtained in
Theorem \ref{expena}. If $(x_{\varepsilon})_{\varepsilon>0} \subset \Lambda$ is such that
\begin{align*}
 \liminf_{\varepsilon\to 0} u_{\varepsilon}(x_{\varepsilon}) > 0,
\end{align*}
then
\begin{enumerate}[(i)]
 \item $\lim_{\varepsilon\to 0} \mathcal{M}(x_{\varepsilon}) = \inf_{\Lambda \cap \pi} \mathcal{M}$,
 \item $\lim_{\varepsilon \to 0} \frac{\dist(x_\varepsilon, \pi)}{\varepsilon}=0$,
 \item $\liminf_{\varepsilon \to 0} d_{d}(x_{\varepsilon}, \partial \Lambda) > 0$,
 \item for every $\delta>0$, there exists $\varepsilon_0>0$ and $R>0$ such that, for every $\varepsilon \in
(0,\varepsilon_0)$,
\[
 \norm{u_{\varepsilon}}_{L^{\infty}\left(\Lambda\setminus B_{d}(x_{\varepsilon},\varepsilon R)\right)} \leq \delta.
\]
\end{enumerate}
\end{Prop}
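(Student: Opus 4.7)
The plan is to work along an arbitrary sequence $\varepsilon_n \to 0$, pass to subsequences as needed so that $x_{\varepsilon_n} \to \bar x \in \overline{\Lambda}$, and combine the upper energy estimate of Proposition~\ref{estim:inf} with the lower estimate of Proposition~\ref{estim:sup} applied to one or more concentration points. The initial step is to apply Proposition~\ref{estim:sup} with a single point $x_n^1=x_{\varepsilon_n}$, giving
\[
\pi\,\mathcal M(\bar x) \le \liminf_{n\to\infty} \varepsilon_n^{-2} c_{\varepsilon_n} \le \limsup_{n\to\infty} \varepsilon_n^{-2} c_{\varepsilon_n} \le \pi\inf_{\Lambda \cap \pi}\mathcal M,
\]
so that $\mathcal M(\bar x) \le \inf_{\Lambda \cap \pi}\mathcal M$. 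All four conclusions will then follow from this inequality together with the geometric assumptions \eqref{hypLambda3}--\eqref{hypLambda4}.

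For (iv), the strategy is a doubling argument by contradiction: if (iv) fails, there exist $\delta>0$, $\varepsilon_n\to 0$, and $y_n \in \Lambda$ with $u_{\varepsilon_n}(y_n) \geq \delta$ and $d_d(x_{\varepsilon_n}, y_n)/\varepsilon_n \to \infty$. After passing to a subsequence so that $y_n \to \bar y \in \overline{\Lambda}$, Proposition~\ref{estim:sup} applied to the two sequences gives $\liminf \varepsilon_n^{-2} c_{\varepsilon_n} \ge \pi(\mathcal M(\bar x) + \mathcal M(\bar y)) \ge 2\pi \inf_\Lambda \mathcal M$, which combined with Proposition~\ref{estim:inf} violates \eqref{hypLambda4}.

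For (ii), the key tool is the orthogonal reflection $R:(x',x'')\mapsto (-x',x'')$, which preserves $d$ so that by \eqref{hypLambda2} one has $R\Lambda=\Lambda$ and $u_{\varepsilon_n}\circ R = u_{\varepsilon_n}$. Hence $Rx_{\varepsilon_n}\in\Lambda$ and $u_{\varepsilon_n}(Rx_{\varepsilon_n}) \geq \delta$. If $|x'_{\varepsilon_n}|/\varepsilon_n\to\infty$ along a subsequence, then $d_d(x_{\varepsilon_n},Rx_{\varepsilon_n}) = 2|x'_{\varepsilon_n}|$ satisfies $d_d(x_{\varepsilon_n},Rx_{\varepsilon_n})/\varepsilon_n\to\infty$, and the two-point argument of the previous paragraph contradicts \eqref{hypLambda4}. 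The delicate case, and the main obstacle of the proof, is when $|x'_{\varepsilon_n}|/\varepsilon_n$ stays bounded but bounded away from $0$: then the two points merge in the $\varepsilon_n$-scale and produce a single rescaled limit $v\in H^1(\R^2)$ via Lemma~\ref{convergentSubsequence}, which inherits a reflection symmetry about a line at distance $\alpha:=\lim |x'_{\varepsilon_n}|/\varepsilon_n>0$ from the rescaling origin. To close this case, I would match the full energy: once (i) forces $\mathcal M(\bar x)=\inf_{\Lambda\cap\pi}\mathcal M$, the rescaled profile $v$ must be a least-energy solution of $-\Delta v+V(\bar x)v=K(\bar x)v^p$ on $\R^2$, which is radially symmetric about a unique center and therefore cannot simultaneously admit an additional axis of symmetry at positive distance from that center — a contradiction.

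Finally, (i) follows once $\bar x\in \pi$ is established in (ii), for then $\mathcal M(\bar x)\geq \inf_{\Lambda\cap\pi}\mathcal M$ and equality is forced by Step~1. For (iii), if $d_d(x_{\varepsilon_n},\partial\Lambda)\to 0$, then $\bar x\in\partial\Lambda\cap\pi$, so $\mathcal M(\bar x)\geq \inf_{\partial\Lambda\cap\pi}\mathcal M$, contradicting (i) via \eqref{hypLambda3}. Throughout, the positivity of the Poisson term is only used implicitly, through the fact that Propositions~\ref{estim:inf}~and~\ref{estim:sup} already absorb it; the present proposition is essentially a Schr\"odinger-type concentration statement that proceeds exactly as in \cite{BDCVS}.
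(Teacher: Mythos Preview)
Your overall strategy---energy comparison via Propositions~\ref{estim:inf} and~\ref{estim:sup}---is exactly what the paper indicates (the paper gives no details here, deferring entirely to \cite[Proposition~4.7]{BDCVS}), and your arguments for (i), (iii), and (iv) are correct and standard.

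There is, however, a gap in your treatment of the ``delicate case'' in (ii). The evenness $\tilde u_{\varepsilon_n}(-x',r)=\tilde u_{\varepsilon_n}(x',r)$ passes to the rescaled limit as symmetry of $v$ about the line $\{y=-\alpha\}$, where $\alpha=\lim_n |x'_n|/\varepsilon_n$. You then claim that this axis lies ``at positive distance from the center'' of the radial ground state, giving a contradiction. But a function radial about a center $c=(c_1,c_2)$ is automatically symmetric about every line through $c$; the reflection symmetry therefore only forces $c_1=-\alpha$, it does \emph{not} force $c$ to be the origin. Nothing in your argument pins the center of $v$ to $0$: the hypothesis $v(0,0)\ge\delta>0$ holds at every point of a positive ground state and carries no information about the location of its peak. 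To close the argument one needs an additional constraint fixing $c=0$---for instance, if $x_{\varepsilon_n}$ were a \emph{local maximum} of $u_{\varepsilon_n}$ one would have $\nabla v_n(0)=0$, hence $\nabla v(0)=0$, and strict radial monotonicity of the ground state would give $c=0$, whence $\alpha=0$. For an arbitrary sequence satisfying only $\liminf u_\varepsilon(x_\varepsilon)>0$, your symmetry argument as written does not conclude; you should either first pass to maximum points (available once (iv) is in hand) or supply another device that locates the center.
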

\begin{proof}
All the assertions are proved by energy comparisons, using only Propositions \ref{estim:inf} and \ref{estim:sup}. A
detailed
proof can be found in \cite[Proposition 4.7]{BDCVS}.
\end{proof}

\section{Solution of the initial problem}\label{sect:init}

\subsection{Linear inequation outside small balls}
In this section we prove that for $\varepsilon$ small enough, the solutions of the penalized problem \eqref{NLSPropen}
are also
solutions of the initial problem \eqref{NLSP}. We follow the arguments of \cite{MVS} and \cite{BDCVS}. First we notice
that the solutions
of \eqref{NLSPropen} satisfy a linear inequation outside small balls. As observed in \cite[Theorem 5]{BoMerc}, the
function
$\phi_{u_{\varepsilon}}$ satisfies the estimate
\begin{align*}
\phi_{u_{\varepsilon}}(x) \geq \frac{C_{\varepsilon}}{C'_{\varepsilon}+\abs{x}},
\end{align*}
for some constants $C_{\varepsilon}, C'_{\varepsilon} > 0$.
Set
\begin{align*}
W_{\varepsilon}(x) := (1 - \mu) V(x) + \frac{C_{\varepsilon} \rho(x)}{C'_{\varepsilon}+\abs{x}}.
\end{align*}
\begin{Lem}\label{lemma:subsol}
 Suppose that the assumptions of Proposition \ref{Prop1} are satisfied and let $(u_{\varepsilon})_{\varepsilon} \subset
E$
 be positive solutions of \eqref{NLSPropen} found in Theorem \ref{expena} and
$(x_{\varepsilon})_{\varepsilon>0} \subset \Lambda$ be such that
\begin{align*}
 \liminf_{\varepsilon\to 0} u_{\varepsilon}(x_{\varepsilon}) > 0.
\end{align*}
Then there exist $R > 0$ and $\varepsilon_0>0$ such that for all $\varepsilon \in (0,\varepsilon_0)$,
\begin{align}\label{lowsol}
 -\varepsilon^2 \left( \Delta u_{\varepsilon} + H u_{\varepsilon} \right) + W_{\varepsilon} u_{\varepsilon} \leq 0
\hspace{0.5cm}
\text{in}\ \R^3\setminus B_d(x_{\varepsilon},\varepsilon R).
\end{align}
\end{Lem}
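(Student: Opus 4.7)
The plan is to split $\R^3 \setminus B_d(x_\varepsilon, \varepsilon R)$ into the two natural regions: the exterior of $\Lambda$, where the penalized nonlinearity is by construction dominated by $\varepsilon^2 H + \mu V$; and $\Lambda \setminus B_d(x_\varepsilon, \varepsilon R)$, where I would exploit the uniform smallness of $u_\varepsilon$ provided by Proposition \ref{Prop1}(iv). In both regions the goal is identical: combine the Euler--Lagrange equation for $u_\varepsilon$ with the pointwise lower bound $\phi_{u_\varepsilon}(x) \geq C_\varepsilon/(C'_\varepsilon + \abs{x})$ recalled just before the statement.

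First, on $\R^3 \setminus \Lambda$, property $(g_3)$ gives directly
\[
 g_\varepsilon(x, u_\varepsilon) \leq \bigl(\varepsilon^2 H(x) + \mu V(x)\bigr) u_\varepsilon,
\]
so that from \eqref{NLSPropen} we obtain
\[
 -\varepsilon^2 \Delta u_\varepsilon - \varepsilon^2 H u_\varepsilon + (1-\mu) V u_\varepsilon + \rho \phi_{u_\varepsilon} u_\varepsilon \leq 0.
\]
Replacing $\phi_{u_\varepsilon}$ by the smaller quantity $C_\varepsilon/(C'_\varepsilon + \abs{x})$ in the (nonnegative) Poisson term yields exactly \eqref{lowsol} on this region.

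On $\Lambda \setminus B_d(x_\varepsilon, \varepsilon R)$ the penalization is not active and $g_\varepsilon(x, u_\varepsilon) = K(x) u_\varepsilon^p$, so the desired inequality reduces to the pointwise estimate $K(x) u_\varepsilon^{p-1} \leq \varepsilon^2 H(x) + \mu V(x)$. Since the paper assumes $V > 0$ on $\overline{\Lambda}$ and $V,K$ are continuous there, pick constants $V_0 > 0$ and $K_{\max} < \infty$ with $V \geq V_0$ and $K \leq K_{\max}$ on $\overline{\Lambda}$. Proposition \ref{Prop1}(iv) supplies, for any preassigned $\delta > 0$, some $R > 0$ and $\varepsilon_0 > 0$ such that $\norm{u_\varepsilon}_{L^\infty(\Lambda \setminus B_d(x_\varepsilon,\varepsilon R))} \leq \delta$ for every $\varepsilon \in (0, \varepsilon_0)$. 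Choosing $\delta := (\mu V_0 / K_{\max})^{1/(p-1)}$ forces $K u_\varepsilon^{p-1} \leq \mu V$ on this region, and the same chain of manipulations as above produces \eqref{lowsol}.

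There is essentially no genuine obstacle beyond having the ingredients at hand: the whole argument is pointwise algebra once one has (i) the specific form of $g_\varepsilon$ outside $\Lambda$ from $(g_3)$, (ii) the lower bound on $\phi_{u_\varepsilon}$, and (iii) the uniform smallness of $u_\varepsilon$ outside the small ball, which is Proposition \ref{Prop1}(iv). The only subtlety is to make sure the constants $R$ and $\varepsilon_0$ chosen to absorb the nonlinearity inside $\Lambda$ are compatible with those coming from Proposition \ref{Prop1}(iv); since the latter are quantifier-free in $\delta$, one simply fixes $\delta$ first and then reads off $R, \varepsilon_0$.
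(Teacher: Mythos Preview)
Your proof is correct and follows essentially the same approach as the paper: split into $\R^3\setminus\Lambda$ (where $(g_3)$ gives the bound directly) and $\Lambda\setminus B_d(x_\varepsilon,\varepsilon R)$ (where Proposition~\ref{Prop1}(iv) with a suitably chosen threshold forces $K u_\varepsilon^{p-1}\le \mu V$), then use the lower bound on $\phi_{u_\varepsilon}$ to pass to $W_\varepsilon$. The paper uses $\eta:=\inf_{\Lambda}(\mu V/K)^{1/(p-1)}$ instead of your $(\mu V_0/K_{\max})^{1/(p-1)}$, but this is only a cosmetic difference.
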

\begin{proof}
 Set
\begin{align*}
 \eta := \inf_{x\in \Lambda} \left( \frac{\mu V(x)}{K(x)} \right)^{\frac{1}{p-1}}.
\end{align*}
Since $V$ and $K$ are bounded positive continuous functions on $\Bar{\Lambda}$, $\eta > 0$.
By Proposition \ref{Prop1}, we can find $\varepsilon_0 > 0$ and $R >0$ such that for all $\varepsilon \in
(0,\varepsilon_0]$, one has
\begin{align*}
 u_{\varepsilon}(x) \leq \eta \hspace{0.5cm} \text{for all} \ x \in \Lambda \setminus
B_d(x_{\varepsilon},\varepsilon R).
\end{align*}
We conclude that
\begin{align*}
 -\varepsilon^2 \Delta u_{\varepsilon} + (1 - \mu)V u_{\varepsilon} + \rho \phi_{u_{\varepsilon}} u_{\varepsilon} \leq
-\varepsilon^2 \Delta u_{\varepsilon} + V u_{\varepsilon} + \rho \phi_{u_{\varepsilon}} u_{\varepsilon} - K
u_{\varepsilon}^p = 0 \hspace{0.5cm}
\end{align*}
in $\Lambda \setminus B_d(x_{\varepsilon},\varepsilon R)$.
The fact that $u_{\varepsilon}$ satisfies \eqref{lowsol} in $\R^3 \setminus\Lambda$ follows directly from the definition
of the penalized nonlinearity.
\end{proof}

This lemma suggests that we can compare the solution $u_{\varepsilon}$ with supersolutions of the operator
$-\varepsilon^2 \left( \Delta + H \right) + W_{\varepsilon}$ in order to obtain decay estimates of $u_{\varepsilon}$.

\subsection{Comparison functions}

In this section we recall results from \cite{BDCVS} about the comparison functions.
The next lemma provides a minimal positive solution of the operator $- \Delta - H$ in $\R^3\setminus
\Bar{\Lambda}$.

\begin{Lem}
\label{lemmaPsiepsilon}
For every $\varepsilon > 0$, there exists $\Psi_\varepsilon \in C^2\left((\R^3\setminus \lbrace 0 \rbrace) \setminus
\Lambda
\right)$ such that
\[
 \left\{ \begin{aligned}
          -\varepsilon^2(\Delta \Psi_\varepsilon + H \Psi_\varepsilon) + W_\varepsilon \Psi_\varepsilon &= 0 &
&\text{in}\ \R^3\setminus
\Bar{\Lambda}, \\
	\Psi_{\varepsilon} &= 1 & &\text{on}\ \partial \Lambda,
         \end{aligned}
 \right.
\]
and
\begin{equation}
\label{PsiepsilonFiniteIntegral}
 \int_{\R^3\setminus \Lambda} \biggl( \abs{\nabla \Psi_\varepsilon(x)}^2 + \frac{\abs{\Psi_\varepsilon(x)}^2}{\abs{x}^2}
\biggr)\: dx <
\infty.
\end{equation}
Moreover, there exists $C > 0$ such that, for every $x \in \R^3\setminus \Lambda$ and every $\varepsilon > 0$,
\begin{equation}
\label{DecayPsiepsilon}
 0 < \Psi_\varepsilon(x) \leq \frac{C}{1+\abs{x}}.
\end{equation}
\end{Lem}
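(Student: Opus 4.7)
The plan is to construct $\Psi_\varepsilon$ by direct minimization, establish positivity via a cut-off argument, and then derive the uniform pointwise decay by comparison with an $\varepsilon$-independent barrier in the sense of Proposition~\ref{Th:comp}. The technical heart lies in the barrier construction for the decay.

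\textbf{Existence and positivity.} I would obtain $\Psi_\varepsilon$ as the minimizer on the affine set $\mathcal{A} := \{u \in \mathcal{D}^{1,2}(\R^3) : u \equiv 1 \text{ on } \bar\Lambda\}$ of the convex functional
\[
 \mathcal{F}(u) := \int_{\R^3 \setminus \Lambda} \bigl[\varepsilon^2 (\abs{\nabla u}^2 - H u^2) + W_\varepsilon u^2 \bigr]\, dx.
\]
By \eqref{positivity} and $W_\varepsilon \geq 0$, $\mathcal{F}$ is nonnegative and coercive on $\mathcal{A}$, and weak lower semicontinuity yields a minimizer. Its Euler--Lagrange equation is exactly the PDE of the statement with $\Psi_\varepsilon \equiv 1$ on $\partial\Lambda$, and standard elliptic bootstrap (using that $H$ and $W_\varepsilon$ are locally bounded on compact subsets of $\R^3 \setminus \{0\}$) gives $C^2$ regularity. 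The integrability \eqref{PsiepsilonFiniteIntegral} follows from $\mathcal{F}(\Psi_\varepsilon) < \infty$ together with \eqref{positivity}. For positivity, $(\Psi_\varepsilon)_-$ vanishes on $\bar\Lambda$ and is admissible; testing the equation against $-(\Psi_\varepsilon)_-$ and using once more \eqref{positivity} and $W_\varepsilon \geq 0$ forces $(\Psi_\varepsilon)_- \equiv 0$, and the strong maximum principle on compact subdomains of $\R^3 \setminus (\bar\Lambda \cup \{0\})$ then upgrades this to $\Psi_\varepsilon > 0$.

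\textbf{Uniform pointwise decay.} I would build a single $\varepsilon$-independent radial supersolution $\bar\Phi$ of $-\Delta - H$ on $\R^3 \setminus \bar\Lambda$ with $\bar\Phi \geq 1$ on $\partial\Lambda$ and $\bar\Phi(x) \leq C/(1+\abs{x})$ globally, and apply Proposition~\ref{Th:comp} to $w = \bar\Phi$, $v = \Psi_\varepsilon$: since $-\Delta \Psi_\varepsilon - H \Psi_\varepsilon = -(W_\varepsilon/\varepsilon^2)\Psi_\varepsilon \leq 0 \leq -\Delta \bar\Phi - H \bar\Phi$ and $\bar\Phi \geq \Psi_\varepsilon$ on $\partial\Lambda$, the comparison principle delivers $\Psi_\varepsilon \leq \bar\Phi$. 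Using the identity $\Delta(f(r)/r) = f''(r)/r$, the supersolution inequality for $\bar\Phi(x) = f(\abs{x})/\abs{x}$ reduces to $f'' + H f \leq 0$; the ansatz $f(r) := 1 - (\log r)^{-\beta/2}$ does the job for $r$ large, since $f''(r) \sim -(\beta/2)\, r^{-2} (\log r)^{-1-\beta/2}$ dominates $H(x) f(\abs{x}) \sim \kappa\, \abs{x}^{-2}(\log\abs{x})^{-1-\beta}$ precisely because $\beta > 0$. In the annular region between $\partial\Lambda$ and a large sphere, I would glue with a power barrier $M\abs{x}^{-\alpha}$ satisfying $\alpha(1-\alpha) > \kappa$ (feasible since $\kappa < 1/4$), taking $M$ large enough that the minimum of the two pieces dominates $1$ on $\partial\Lambda$ while remaining $\leq C/(1+\abs{x})$. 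Since $\bar\Phi$ is built without reference to $\varepsilon$, the constant $C$ is automatically uniform.

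The main obstacle is this barrier: the fundamental-solution rate $1/\abs{x}$ is attainable only because the logarithmic refinement built into $H$ (the exponent $(1+\beta)/2$ with $\beta>0$) gives the operator $-\Delta - H$ just enough positivity beyond plain Hardy to accommodate the decay of the Newtonian kernel, whereas $1/\abs{x}$ alone is manifestly not a supersolution of $-\Delta - H$.
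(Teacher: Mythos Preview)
The paper does not prove this lemma; it simply cites \cite[Lemma 5.2]{BDCVS}. Your strategy---variational construction of $\Psi_\varepsilon$ followed by comparison with an $\varepsilon$-independent supersolution of $-\Delta-H$---is sound and is the approach one finds in that reference. The existence/positivity part is fine, and your barrier at infinity is correct: the identity $\Delta(f(r)/r)=f''(r)/r$ together with $f(r)=1-(\log r)^{-\beta/2}$ exploits precisely the logarithmic slack in $H$ that makes the Newtonian rate $1/\abs{x}$ reachable.

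There is, however, a genuine gap in your decay argument. Because $0\in d$ and $\bar\Lambda\cap d=\emptyset$, the domain $\R^3\setminus\bar\Lambda$ contains a punctured neighbourhood of the origin; the bound \eqref{DecayPsiepsilon} therefore requires $\Psi_\varepsilon\le C$ uniformly near $0$. Your ``annular region between $\partial\Lambda$ and a large sphere'' does not cover this, and the power barrier $M\abs{x}^{-\alpha}$ with $\alpha(1-\alpha)>\kappa$ blows up at $0$, so the comparison only yields $\Psi_\varepsilon(x)\le M\abs{x}^{-\alpha}$ there, which is not the claimed bound. One also cannot fall back on a naive maximum principle to get $\Psi_\varepsilon\le 1$, since the zeroth-order coefficient $W_\varepsilon-\varepsilon^2 H$ need not be nonnegative outside $\Lambda$.

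The fix is symmetric to what you already did at infinity: near $0$ one has $H(x)\sim\kappa\,\abs{x}^{-2}\abs{\log\abs{x}}^{-(1+\beta)}$, and the bounded radial function $g(r)=1-\abs{\log r}^{-\beta/2}$ (for small $r$) satisfies $-\Delta g - Hg\ge 0$ by the same computation, because $-\Delta g\sim \tfrac{\beta}{2}r^{-2}\abs{\log r}^{-\beta/2-1}$ again dominates $Hg\sim \kappa r^{-2}\abs{\log r}^{-1-\beta}$. Gluing this third piece with the two you already have (e.g.\ by taking the minimum of supersolutions) produces a global $\varepsilon$-independent barrier that is genuinely $\le C/(1+\abs{x})$ on all of $\R^3\setminus\Lambda$, and the rest of your argument then goes through.
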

\begin{proof}
See \cite[Lemma 5.2]{BDCVS}.
\end{proof}

As explained in \cite{MVS}, the estimate \eqref{DecayPsiepsilon} is the best one can hope for, at least if $W_{\varepsilon}$
decays
rapidly at infinity.
However, if $W_{\varepsilon}$ decays quadratically or subquadratically at infinity, we can improve
\eqref{DecayPsiepsilon}.

\begin{Lem}
\label{lemmaImprovedDecay}
Let $\Psi_\varepsilon$ be given by Lemma~\ref{lemmaPsiepsilon}.
\begin{enumerate}[(1)]
\item \label{Psiepsiloninfinityquadratic} If $\liminf_{\abs{x}\rightarrow\infty} W_{\varepsilon}(x)\abs{x}^{2} > 0$,
then there exist
$\lambda > 0$, $R > 0$ and $C > 0$ such that for every $\varepsilon > 0$ and $x \in \R^3\setminus B(0,R)$,
\[
  \Psi_\varepsilon (x) \leq
C\left(\frac{R}{\abs{x}}\right)^{\frac{1}{2}+\sqrt{\frac{1}{4} - \kappa
+ \frac{\lambda^2}{\varepsilon^2}}}.
\]
\item \label{PsiepsiloninfinitySuperquadratic} If $\liminf_{\abs{x}\rightarrow\infty} W_{\varepsilon}(x)\abs{x}^{\alpha}
> 0$ with
$\alpha < 2$, then there exist $\lambda > 0$, $R > 0$, $C > 0$ and $\varepsilon_0 > 0$ such that for every
$\varepsilon \in (0,\varepsilon_0)$ and $x \in \R^3\setminus B(0,R)$,
\[
  \Psi_\varepsilon (x) \leq C\exp \left(-\frac{\lambda}{\varepsilon}
\left(\abs{x}^\frac{2-\alpha}{2}-R^\frac{2-\alpha}{2}\right)\right).
\]
\item  If $\liminf_{\abs{x}\rightarrow 0} W_{\varepsilon}(x)\abs{x}^{2} > 0$, then there exist $\lambda > 0$, $r > 0$
and $C > 0$
such that for every $\varepsilon > 0$ and $x \in B(0,r)$,
\[
  \Psi_\varepsilon (x) \leq
C\left(\frac{\abs{x}}{r}\right)^{\sqrt{\frac{1}{4} -\kappa
+\frac{\lambda^2}{\varepsilon^2}}-\frac{1}{2}}.
\]
\item If $\liminf_{\abs{x}\rightarrow 0} W_{\varepsilon}(x)\abs{x}^{\alpha} > 0$ with $\alpha < 2$, then there exist
$\lambda >
0$, $R > 0$, $C > 0$ and $\varepsilon_0 > 0$ such that for every $\varepsilon \in (0,\varepsilon_0)$ and $x \in
B(0,r)$,
\[
  \Psi_\varepsilon (x) \leq C\exp \left(-\frac{\lambda}{\varepsilon}
\left(\abs{x}^{-\frac{\alpha-2}{2}}-r^{-\frac{\alpha-2}{2}}\right)\right).
\]
\end{enumerate}
\end{Lem}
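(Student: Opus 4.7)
The plan is to exhibit, in each of the four cases, an explicit supersolution $v$ of the operator $\mathcal{L}_\varepsilon := -\varepsilon^2(\Delta + H) + W_\varepsilon$ on an exterior domain $\R^3\setminus \overline{B(0,R)}$ for (1)--(2) or on a punctured ball $B(0,r)\setminus\{0\}$ for (3)--(4), normalized so that $v \geq \Psi_\varepsilon$ on the boundary. The comparison principle, Proposition \ref{Th:comp}, combined with the already available bound \eqref{DecayPsiepsilon} for $\Psi_\varepsilon$ on the boundary spheres, then propagates the domination to the interior. For $R$ large (resp.\ $r$ small) the working region lies in $\R^3\setminus\overline{\Lambda}$, so $\Psi_\varepsilon$ satisfies $\mathcal{L}_\varepsilon \Psi_\varepsilon = 0$ there.

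For the quadratic cases (1) and (3), try the radial powers $v(x) = |x|^{-s}$ and $v(x) = |x|^{t}$ respectively. Direct computation in $\R^3$ gives $-\Delta(|x|^{-s}) = s(1-s)|x|^{-s-2}$ and $-\Delta(|x|^{t}) = -t(t+1)|x|^{t-2}$. Choosing
\begin{align*}
s = \tfrac{1}{2} + \sqrt{\tfrac{1}{4} - \kappa + \lambda^2/\varepsilon^2}, \qquad t = -\tfrac{1}{2} + \sqrt{\tfrac{1}{4} - \kappa + \lambda^2/\varepsilon^2},
\end{align*}
yields $s(1-s) = \kappa - \lambda^2/\varepsilon^2$ and $t(t+1) = \lambda^2/\varepsilon^2 - \kappa$. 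Combined with the pointwise bound $H \leq \kappa/|x|^2$, this gives in both cases
\begin{align*}
\mathcal{L}_\varepsilon v \geq |x|^{-2}\bigl(W_\varepsilon(x)|x|^2 - \lambda^2\bigr)\,v.
\end{align*}
The liminf hypothesis on $W_\varepsilon |x|^2$ then lets me fix $\lambda$ small and $R$ (resp.\ $r$) so the bracket is nonnegative on the working region. Rescaling $v$ to equal $1$ on the boundary produces the stated profiles $(R/|x|)^s$ and $(|x|/r)^t$.

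For the subquadratic cases (2) and (4), I would instead use the exponential ansatz $v = e^{-\phi}$, with $\phi(x) = \tfrac{\lambda}{\varepsilon}|x|^{(2-\alpha)/2}$ for (2) and the mirrored $\phi(x) = \tfrac{\lambda}{\varepsilon}|x|^{-(\alpha-2)/2}$ for (4). The identity $-\Delta v = (\Delta\phi - |\nabla\phi|^2)v$ together with $|\nabla\phi|^2 = \tfrac{\lambda^2(2-\alpha)^2}{4\varepsilon^2}|x|^{-\alpha}$ gives
\begin{align*}
\mathcal{L}_\varepsilon v / v = W_\varepsilon(x) - \tfrac{\lambda^2(2-\alpha)^2}{4}|x|^{-\alpha} - \varepsilon^2 H + \text{lower order}.
\end{align*}
Since $\alpha < 2$, the Hardy correction $\varepsilon^2 H = O(\varepsilon^2|x|^{-2})$ is subleading with respect to $|x|^{-\alpha}$. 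The liminf hypothesis on $W_\varepsilon \cdot |x|^\alpha$ then permits me to pick $\lambda$ so small that the two leading $|x|^{-\alpha}$ terms combine with the right sign, and $R$ (resp.\ $r$) such that the remainder is negligible. Normalizing $v$ by its boundary value produces precisely the claimed exponential.

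The main obstacle is the sharp interplay between $H$ and $W_\varepsilon$ in the quadratic regimes (1) and (3), where both decay like $|x|^{-2}$: the exponents $s$ and $t$ have to be tuned precisely so that $s(s-1) = t(t+1) = \lambda^2/\varepsilon^2 - \kappa$, making the dominant $|x|^{-2}$ contributions cancel exactly and leaving only the $\lambda^2/\varepsilon^2$ margin provided by $W_\varepsilon$. A secondary point is the application of Proposition \ref{Th:comp} on an unbounded (resp.\ punctured) domain: one must verify that $\nabla(\Psi_\varepsilon - Cv)_- \in L^2$ and $(\Psi_\varepsilon - Cv)_-/|x| \in L^2$, both of which follow from \eqref{PsiepsilonFiniteIntegral} together with the decay of $v$ itself.
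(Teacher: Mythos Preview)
Your proposal is correct and follows precisely the standard comparison-function argument: the paper itself gives no proof here but defers to \cite[Lemma~5.3]{BDCVS}, and your construction of radial power supersolutions in the quadratic cases and exponential supersolutions in the subquadratic cases, together with Proposition~\ref{Th:comp}, is exactly the method used there. Your handling of the integrability conditions needed to invoke Proposition~\ref{Th:comp} via \eqref{PsiepsilonFiniteIntegral} is also the right observation.
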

\begin{proof}
See \cite[Lemma 5.3]{BDCVS}.
\end{proof}

Now we provide a comparison function that describes the exponential decay of $u_\varepsilon$ inside $\Lambda$.

\begin{Lem}\label{CompPeak}
Let $\Bar{x} \in \Lambda$ and $R > 0$ be such that
\begin{equation}
\label{condRxeps}
 B_{d}(\Bar{x},R) \subset \Lambda.
\end{equation}
Define
\begin{align}\label{def:Phieps}
\Phi_{\varepsilon}^{\Bar{x}}(x) := \cosh \left( \lambda \frac{R-d_{d}(x,\Bar{x})}{\varepsilon}\right).
\end{align}
There exists $\lambda > 0$ and $\varepsilon_0 > 0$ such that for every $\varepsilon \in (0,\varepsilon_0)$, one has
\begin{align*}
 -\varepsilon^2 \Delta \Phi^{\Bar{x}}_{\varepsilon} + W_{\varepsilon} \Phi^{\Bar{x}}_{\varepsilon} \geq 0 \ \text{in}\
B_d(\Bar{x},R).
\end{align*}
\end{Lem}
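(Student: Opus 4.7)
The plan is to compute $\Delta\Phi^{\Bar{x}}_\varepsilon$ in cylindrical coordinates adapted to the axial symmetry of the problem, and then to choose $\lambda$ small enough so that the main term $(W_\varepsilon-\lambda^2)\cosh(\,\cdot\,)$ dominates the geometric correction coming from the two-dimensional Laplacian of $s := d_{d}(\cdot\,,\Bar{x})$. Since $\Phi^{\Bar{x}}_\varepsilon$ depends on $x$ only through $x'$ and $r := |x''|$, and since $s = \sqrt{|x'-\Bar{x}'|^2 + (r-|\Bar{x}''|)^2}$ is the Euclidean distance in the $(x',r)$-halfplane from $(\Bar{x}',|\Bar{x}''|)$, one has $|\nabla_{(x',r)} s|^2 = 1$, $\Delta_{(x',r)} s = 1/s$ and $\partial_r s = (r-|\Bar{x}''|)/s$. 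Using the cylindrical Laplacian $\Delta = \partial_{x'}^2 + \partial_r^2 + r^{-1}\partial_r$ and setting $f(s) := \cosh(\lambda(R-s)/\varepsilon)$, a direct computation gives
\[
-\varepsilon^2\Delta\Phi^{\Bar{x}}_\varepsilon + W_\varepsilon\Phi^{\Bar{x}}_\varepsilon
= (W_\varepsilon-\lambda^2)\cosh\!\Bigl(\tfrac{\lambda(R-s)}{\varepsilon}\Bigr)
+ \frac{\varepsilon\lambda\sinh\bigl(\tfrac{\lambda(R-s)}{\varepsilon}\bigr)}{s}\cdot\frac{2r-|\Bar{x}''|}{r}.
\]

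Because $\Bar{\Lambda}\cap d = \emptyset$, we have $r\geq c_0 := \inf_{\Bar{\Lambda}}|x''|>0$ on $\Bar{\Lambda}$; moreover, as noted just after \eqref{hypLambda4}, we may assume $V>0$ on $\Bar{\Lambda}$, so $W_\varepsilon \geq (1-\mu)\min_{\Bar{\Lambda}} V =: V_0 > 0$. I first fix $\lambda$ with $\lambda^2 < V_0/2$, which makes the principal summand at least $(V_0/2)\cosh(\,\cdot\,)>0$, and then control the signed second summand by splitting $B_{d}(\Bar{x},R)$ into an inner region $\{s\leq|\Bar{x}''|/4\}$ and an outer region $\{s>|\Bar{x}''|/4\}$. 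On the inner region, $r \geq |\Bar{x}''|-s \geq 3|\Bar{x}''|/4$, so $2r-|\Bar{x}''|\geq |\Bar{x}''|/2>0$; since $\sinh(\,\cdot\,)\geq 0$, the second summand is nonnegative and the inequality holds for every $\varepsilon>0$. On the outer region, $\sinh(\,\cdot\,)/s \leq (4/|\Bar{x}''|)\cosh(\,\cdot\,)$ and $|(2r-|\Bar{x}''|)/r|$ is bounded by $2+|\Bar{x}''|/c_0$, so the second summand is bounded in absolute value by $C\varepsilon\lambda\cosh(\,\cdot\,)$; choosing $\varepsilon_0$ so small that $C\varepsilon_0\lambda \leq V_0/4$ completes the proof.

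The only delicate point is the singular contribution $f'(s)/s$ at $s=0$ arising from $\Delta_{(x',r)} s = 1/s$. It sits, however, precisely where $r$ is close to $|\Bar{x}''|$, hence where the factor $(2r-|\Bar{x}''|)/r$ is positive, so the singular term carries the helpful sign; this is what makes the crude two-region split above sufficient and spares any finer estimate.
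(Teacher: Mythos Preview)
Your argument is correct. The paper itself does not give a proof of this lemma, deferring instead to \cite[Lemma~5.4]{BDCVS}; your direct computation in cylindrical coordinates and the inner/outer splitting of $B_{d}(\Bar{x},R)$ (favorable sign of the geometric term near $s=0$, $O(\varepsilon)$ control where $s$ is bounded below) is exactly the kind of argument one expects and supplies the details the present paper omits.
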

\begin{proof}
See \cite[Lemma 5.4]{BDCVS}.
\end{proof}

\begin{Lem}\label{lemma:barrier}
Let $(x_{\varepsilon})_{\varepsilon} \subset \Lambda$ be such that
 \begin{align*}
 \liminf_{\varepsilon \to 0} d_{d}(x_{\varepsilon}, \partial \Lambda) > 0
 \end{align*}
and $R > 0$.
Then, there exist $\varepsilon_0 > 0$ and a family of functions $(w_{\varepsilon})_{0<\varepsilon<\varepsilon_0} \subset
C^{1,1}_{\textnormal{loc}}((\R^3 \setminus \{0\}) \setminus B_d(x_{\varepsilon},\varepsilon R))$ such that for
all $\varepsilon \in (0,\varepsilon_0)$, one has
 \begin{enumerate}[(i)]
 \item $w_{\varepsilon}$ satisfies the inequation
\begin{align*}
 -\varepsilon^2 \left( \Delta + H\right)  w_{\varepsilon} + W_{\varepsilon} w_{\varepsilon} \geq 0 \ \text{in}\ \R^3
\setminus
B_d(x_{\varepsilon},\varepsilon R),
\end{align*}
 \item\label{barrierb} $\nabla w_{\varepsilon} \in L^2(\R^3\setminus B_d(x_{\varepsilon},\varepsilon R))$ and
$\frac{w_{\varepsilon}}{\abs{x}} \in L^2(\R^3\setminus B_d(x_{\varepsilon},\varepsilon R))$,
 \item \label{barrierc} $w_{\varepsilon}\geq 1$ on $\partial B_d(x_{\varepsilon},\varepsilon R)$,
\item \label{barrierd} for every $x \in B_d(x_{\varepsilon},\varepsilon R)$,
\[
 w_{\varepsilon}(x) \leq C \exp{\left( -\frac{\lambda}{\varepsilon} \frac{d_{d}(x, x_\varepsilon)}{1+d_{d}(x,
x_\varepsilon)}\right)}
\left( 1+\abs{x} \right)^{-1}, \hspace{0.5cm} x \in \R^3.
\]
 \end{enumerate}
Moreover,
\begin{enumerate}[(1)]
\item \label{Wepsiloninfinityquadratic} If $\liminf_{\abs{x}\rightarrow\infty} W_{\varepsilon}(x)\abs{x}^{2} > 0$, then
there exists
$\lambda > 0$, $\nu > 0$ and $C > 0$ such that for $\varepsilon > 0$ small enough,
\[
  w_\varepsilon (x) \leq C \exp{\left( -\frac{\lambda}{\varepsilon} \frac{d_{d}(x, x_\varepsilon)}{1+d_{d}(x,
x_\varepsilon)}\right)}
\left( 1+\abs{x} \right)^{-\frac{\nu}{\varepsilon}}.
\]
\item \label{WepsiloninfinitySuperquadratic} If $\liminf_{\abs{x}\rightarrow\infty} W_{\varepsilon}(x)\abs{x}^{\alpha} >
0$ with
$\alpha > 2$, then there exists $\lambda > 0$ and $C > 0$ such that for $\varepsilon > 0$ small enough,
\[
w_\varepsilon (x) \leq C\exp \left(-\frac{\lambda}{\varepsilon}\frac{d_{d}(x, x_\varepsilon)}{1+d_d(x,
x_\varepsilon)}(1+\abs{x})^\frac{2-\alpha}{2}\right).
\]
\item \label{Wepsilon0quadratic} If $\liminf_{\abs{x}\rightarrow 0} W_{\varepsilon}(x)\abs{x}^{2} > 0$, then there
exists $\lambda >
0$, $\nu > 0$ and $C > 0$ such that for $\varepsilon > 0$ small enough,
\[
  w_\varepsilon (x) \leq C \exp{\left( -\frac{\lambda}{\varepsilon} \frac{d_{d}(x, x_\varepsilon)}{1+d_{d}(x,
x_\varepsilon)}\right)}
\left( \frac{\abs{x}}{1+\abs{x}}  \right)^{\frac{\nu}{\varepsilon}}.
\]
\item \label{Wepsilon0Superquadratic}If $\liminf_{\abs{x}\rightarrow 0} W_{\varepsilon}(x)\abs{x}^{\alpha} > 0$ with
$\alpha >
2$,
then there exists $\lambda > 0$ and $C > 0$ such that for $\varepsilon > 0$ small enough,
\[
  w_\varepsilon (x) \leq C\exp \left(-\frac{\lambda}{\varepsilon}\frac{d_{d}(x, x_\varepsilon)}{1+d_d(x,
x_\varepsilon)}\left(\frac{\abs{x}}{1+\abs{x}}\right)^\frac{\alpha-2}{2}\right).
\]
\end{enumerate}
\end{Lem}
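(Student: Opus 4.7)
The plan is to construct $w_\varepsilon$ by gluing together the two barriers already at our disposal: the local barrier $\Phi_\varepsilon^{x_\varepsilon}$ from Lemma \ref{CompPeak}, which encodes exponential decay away from $x_\varepsilon$ at rate $\lambda/\varepsilon$ inside $\Lambda$, and the far-field barrier $\Psi_\varepsilon$ from Lemma \ref{lemmaPsiepsilon} (sharpened by Lemma \ref{lemmaImprovedDecay} for the refinements (1)--(4)), which carries the $(1+\abs{x})^{-1}$ polynomial decay outside $\Lambda$. The interpolation factor $d_d(x,x_\varepsilon)/(1+d_d(x,x_\varepsilon))$ in (iv) is precisely what bridges the pure exponential decay near $x_\varepsilon$ with its saturation to $e^{-\lambda/\varepsilon}$ far away.

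First I would exploit the standing hypothesis $\liminf_{\varepsilon\to 0}d_d(x_\varepsilon,\partial\Lambda)>0$ to fix $R_0>0$ and $\varepsilon_0>0$ such that $B_d(x_\varepsilon,R_0)\subset\Lambda$ and $\varepsilon R<R_0/2$ for every $\varepsilon\in(0,\varepsilon_0)$. Invoking Lemma \ref{CompPeak} with $\bar{x}=x_\varepsilon$ and radius $R_0$ delivers a constant $\lambda>0$ (shrunk if needed so that the same $\lambda$ also works for the conclusions of Lemma \ref{lemmaImprovedDecay}) and a supersolution $\Phi_\varepsilon^{x_\varepsilon}$ on $B_d(x_\varepsilon,R_0)\setminus B_d(x_\varepsilon,\varepsilon R)$. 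On $\R^3\setminus\overline{\Lambda}$ Lemma \ref{lemmaPsiepsilon} already provides $\Psi_\varepsilon$, a solution of the homogeneous operator with $\Psi_\varepsilon=1$ on $\partial\Lambda$ and $\Psi_\varepsilon\le C/(1+\abs{x})$.

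Next I would define $w_\varepsilon$ piecewise. Inside $B_d(x_\varepsilon,R_0)$, take
\[
 w_\varepsilon(x) := \frac{C_1\,\Phi_\varepsilon^{x_\varepsilon}(x)}{(1+\abs{x})\,\cosh\!\bigl(\lambda(R_0-\varepsilon R)/\varepsilon\bigr)},
\]
a positive multiple of the inner barrier that keeps the $(1+\abs{x})^{-1}$ factor (harmless for the supersolution property since $(1+\abs{x})^{-1}$ is smooth and bounded on $\overline{B_d(x_\varepsilon,R_0)}$, so it contributes only a lower-order perturbation absorbed by the positive lower bound on $V$ inside $\Lambda$). Outside $B_d(x_\varepsilon,R_0)$, I would use $C_2\Psi_\varepsilon$ after extending $\Psi_\varepsilon$ into the annular region $\Lambda\setminus B_d(x_\varepsilon,R_0)$ by a cosh-type comparison as in Lemma \ref{CompPeak}, applicable because $V$ is bounded below by a positive constant on $\overline{\Lambda}$. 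The constants $C_1,C_2$ are chosen so that $w_\varepsilon$ is continuous at $\partial B_d(x_\varepsilon,R_0)$ and satisfies $w_\varepsilon\ge 1$ on $\partial B_d(x_\varepsilon,\varepsilon R)$. Using that a continuous function equal to the minimum of two supersolutions is itself a supersolution (with respect to the weak formulation encoded by Proposition \ref{Th:comp}), (i) follows. Item (ii) follows from \eqref{PsiepsilonFiniteIntegral} together with the explicit form of the cosh piece and $\nabla w_\varepsilon\in L^\infty_{\mathrm{loc}}$; (iii) is built into the normalization; and (iv) is obtained by estimating $\cosh(\lambda(R_0-d_d(x,x_\varepsilon))/\varepsilon)\le e^{-\lambda(d_d(x,x_\varepsilon)-R_0)/\varepsilon}$ near $x_\varepsilon$ and $\Psi_\varepsilon(x)\le C(1+\abs{x})^{-1}$ far away, with the exponent $d_d/(1+d_d)$ interpolating across the gluing interface.

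The refinements (1)--(4) are achieved by substituting the sharper Lemma \ref{lemmaImprovedDecay} bounds for $\Psi_\varepsilon$ in the outer piece in each of the four growth regimes, and keeping the same cosh-type inner piece, so the improved polynomial or exponential decay factor in $\abs{x}$ simply multiplies the $(1+\abs{x})^{-1}$ already present. The main obstacle, and where I would concentrate the technical work, is the constant-chasing needed to simultaneously ensure continuity at $\partial B_d(x_\varepsilon,R_0)$, the supersolution property across the interface, the normalization on $\partial B_d(x_\varepsilon,\varepsilon R)$, and the pointwise bound (iv) with constants $C,\lambda,\nu$ independent of $\varepsilon$; handling the point $0\notin \overline{\Lambda}$ (where $H$ is singular) also requires care that the extended inner barrier stays in $C^{1,1}_{\mathrm{loc}}((\R^3\setminus\{0\})\setminus B_d(x_\varepsilon,\varepsilon R))$ and that Hardy's inequality \eqref{positivity} is available in the distributional formulation of the comparison principle.
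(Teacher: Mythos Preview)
Your approach is essentially the one the paper relies on: the paper does not prove this lemma directly but defers to \cite[Lemma~5.5]{BDCVS}, and the construction there is precisely the gluing of the inner $\cosh$-type barrier $\Phi_\varepsilon^{x_\varepsilon}$ from Lemma~\ref{CompPeak} with the outer barrier $\Psi_\varepsilon$ from Lemma~\ref{lemmaPsiepsilon} (refined via Lemma~\ref{lemmaImprovedDecay}), matched across an intermediate interface so that the result is a global weak supersolution with the stated integrability and decay.

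One small technical point worth tightening: rather than multiplying the inner piece by $(1+\abs{x})^{-1}$ and arguing that this perturbation is absorbed by the potential, it is cleaner (and closer to the actual construction in \cite{BDCVS}) to keep the inner barrier as a pure multiple of $\Phi_\varepsilon^{x_\varepsilon}$ on $B_d(x_\varepsilon,R_0)\setminus B_d(x_\varepsilon,\varepsilon R)$, extend $\Psi_\varepsilon$ inward through $\Lambda\setminus B_d(x_\varepsilon,R_0)$ by another $\cosh$-type supersolution, and then take the pointwise minimum; the factor $(1+\abs{x})^{-1}$ in the final bound (iv) then comes entirely from the outer estimate \eqref{DecayPsiepsilon}, since on the bounded set $\overline{\Lambda}$ any positive constant is comparable to $(1+\abs{x})^{-1}$. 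This avoids having to justify that $(1+\abs{x})^{-1}$ preserves the supersolution property and makes the $C^{1,1}_{\mathrm{loc}}$ regularity and the constant-independence in $\varepsilon$ more transparent.
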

\begin{proof}
See \cite[Lemma 5.5]{BDCVS}.
\end{proof}

Thanks to the previous lemma, we obtain an upper bound on the solutions $(u_{\varepsilon})_{\varepsilon>0}$ of
\eqref{NLSPropen}.

\begin{Prop}\label{Prop:decay}
 Suppose that the assumptions of Proposition \ref{Prop1} are satisfied. Let $(u_{\varepsilon})_{\varepsilon>0} \subset
E$
 be the positive solutions of \eqref{NLSPropen} found in Theorem \ref{expena} and
$(x_{\varepsilon})_{\varepsilon>0} \subset \Lambda$ be such that
\begin{align*}
 \liminf_{\varepsilon\to 0} u_{\varepsilon}(x_{\varepsilon}) > 0.
\end{align*}
Then there exist $C>0$, $\lambda > 0$ and $\varepsilon_0>0$ such that for all $\varepsilon \in (0,\varepsilon_0)$,
\begin{align}\label{decay}
 u_{\varepsilon}(x) \leq C \exp{\left( -\frac{\lambda}{\varepsilon}
\frac{d(x,S^1_{\varepsilon})}{1+d(x,S^1_{\varepsilon})}\right)}
( 1+\abs{x} )^{-1}, \hspace{0.5cm} x \in \R^3.
\end{align}
Moreover, \eqref{Wepsiloninfinityquadratic}, \eqref{WepsiloninfinitySuperquadratic}, \eqref{Wepsilon0quadratic} and
\eqref{Wepsilon0Superquadratic} in Lemma~\ref{lemma:barrier} hold with $u_\varepsilon$ in place of $w_\varepsilon$.
\end{Prop}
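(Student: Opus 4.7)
The strategy is to combine the subsolution inequation of Lemma~\ref{lemma:subsol} with a scalar multiple of the supersolution $w_\varepsilon$ from Lemma~\ref{lemma:barrier} via the comparison principle of Proposition~\ref{Th:comp}, applied on the exterior domain $\Omega_\varepsilon := \R^3 \setminus \overline{B_d(x_\varepsilon, \varepsilon R)}$. By Proposition~\ref{Prop1}\,(iii), $\liminf_{\varepsilon\to 0} d_d(x_\varepsilon, \partial \Lambda) > 0$, so Lemma~\ref{lemma:barrier} indeed supplies such a $w_\varepsilon$ on $\Omega_\varepsilon$, with the pointwise decay~\eqref{barrierd} and the integrability~\eqref{barrierb}.

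The first step is to obtain a uniform $L^\infty$-bound $\|u_\varepsilon\|_{L^\infty(\R^3)} \leq C_0$ independent of small $\varepsilon$. Using the growth conditions $(g_1)$--$(g_3)$ of $g_\varepsilon$, the energy estimate $\|u_\varepsilon\|_{H_{V,\varepsilon}}^2 \leq C \varepsilon^2$ of Proposition~\ref{estim:inf} and a standard Moser / Brezis--Kato iteration applied to the penalized equation~\eqref{NLSPropen}, one obtains this bound; the Poisson term $\rho \phi_{u_\varepsilon} u_\varepsilon$ is nonnegative and only helps. Choosing $M := C_0$ and invoking $w_\varepsilon \geq 1$ on $\partial B_d(x_\varepsilon, \varepsilon R)$ (item~\eqref{barrierc}) gives $u_\varepsilon \leq M w_\varepsilon$ on $\partial \Omega_\varepsilon$.

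The difference $z_\varepsilon := M w_\varepsilon - u_\varepsilon$ then satisfies $-\varepsilon^2 (\Delta z_\varepsilon + H z_\varepsilon) + W_\varepsilon z_\varepsilon \geq 0$ in $\Omega_\varepsilon$. Since $W_\varepsilon \geq 0$, testing against $(z_\varepsilon)_-$, discarding the nonnegative term $\int W_\varepsilon (z_\varepsilon)_-^2$ which appears with the right sign, and invoking~\eqref{positivity} forces $(z_\varepsilon)_- \equiv 0$; the integrability required by Proposition~\ref{Th:comp} for $(z_\varepsilon)_-$ follows from~\eqref{barrierb} together with $u_\varepsilon \in \mathcal{D}^{1,2}(\R^3)$ and Hardy's inequality. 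Hence $u_\varepsilon \leq M w_\varepsilon$ throughout $\Omega_\varepsilon$, and inserting~\eqref{barrierd} yields~\eqref{decay} with $d_d(x, x_\varepsilon)$ in place of $d(x, S^1_\varepsilon)$. Now $|d_d(x, x_\varepsilon) - d(x, S^1_\varepsilon)| \leq |x'_\varepsilon|$, and Proposition~\ref{Prop1}\,(ii) gives $|x'_\varepsilon|/\varepsilon \to 0$, so the Lipschitz character of $t \mapsto t/(1+t)$ lets us swap one distance for the other at the cost of an $O(1)$ multiplicative factor. Inside $B_d(x_\varepsilon, \varepsilon R)$ the bound~\eqref{decay} reduces to the uniform $L^\infty$-bound since the exponential factor is bounded below and $1+|x|$ is bounded above. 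The refinements~\eqref{Wepsiloninfinityquadratic}--\eqref{Wepsilon0Superquadratic} are obtained by inserting the corresponding sharper upper bounds of Lemma~\ref{lemma:barrier} in place of~\eqref{barrierd}.

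The main obstacle is securing the uniform $L^\infty$-bound on $u_\varepsilon$: the iteration has to be carried out so as to remain insensitive to the singular weights $H$ and $\rho$ near the origin and uniform in the small parameter $\varepsilon$, with the scale-invariant $H_{V,\varepsilon}$-energy bound of Proposition~\ref{estim:inf} providing the crucial starting point. The replacement of $d_d(x, x_\varepsilon)$ by $d(x, S^1_\varepsilon)$ is a minor but not purely cosmetic point and depends essentially on the off-$\pi$ control $|x'_\varepsilon| = o(\varepsilon)$.
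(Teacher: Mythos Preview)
Your overall strategy---combine the subsolution inequality of Lemma~\ref{lemma:subsol} with the barrier $w_\varepsilon$ of Lemma~\ref{lemma:barrier} via the comparison principle of Proposition~\ref{Th:comp}---is exactly the approach of the cited reference, and your treatment of the substitution of $d(x,S^1_\varepsilon)$ for $d_d(x,x_\varepsilon)$ using Proposition~\ref{Prop1}\,(ii) is correct.

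The one weak point is your ``first step''. A \emph{global} uniform bound $\|u_\varepsilon\|_{L^\infty(\R^3)}\leq C_0$ via a Moser or Brezis--Kato iteration applied to~\eqref{NLSPropen} on the original scale is not standard: the factor $\varepsilon^2$ in front of the Laplacian makes the iteration constants degenerate as $\varepsilon\to 0$, so the claim as written is at best a sketch and at worst circular (the decay outside $\Lambda$ is what the comparison is supposed to deliver). More importantly, it is unnecessary. You only need $u_\varepsilon\leq M$ on the boundary $\partial B_d(x_\varepsilon,\varepsilon R)$, and this is supplied directly by Proposition~\ref{Prop1}\,(iv): taking $\delta=1$ there yields $R>0$ and $\varepsilon_0>0$ with $u_\varepsilon\leq 1$ on $\Lambda\setminus B_d(x_\varepsilon,\varepsilon R)$, hence on $\partial B_d(x_\varepsilon,\varepsilon R)$ (which lies in $\Lambda$ by Proposition~\ref{Prop1}\,(iii)); since $w_\varepsilon\geq 1$ there by item~\eqref{barrierc}, the boundary comparison holds with $M=1$. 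The global $L^\infty$ bound is then a \emph{consequence} of the comparison, not a prerequisite. For the remaining interior estimate on $B_d(x_\varepsilon,\varepsilon R)$ one uses the uniform $W^{2,q}$ bound~\eqref{conv:estimW2q} on the rescaled solutions from Lemma~\ref{convergentSubsequence}; that is where a Moser-type argument is legitimately carried out, namely on the $\varepsilon$-free rescaled equation~\eqref{conv:eqn}.
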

\begin{proof}
See \cite[Lemma 5.6]{BDCVS}.
\end{proof}

\subsection{Solution of the original problem}
\begin{Prop}\label{exinit}
 Let $(u_{\varepsilon})_{\varepsilon>0} \subset E$ be the positive solutions of \eqref{NLSPropen} found in Theorem
\ref{expena}. Assume that one set $(\mathcal{G}_{\infty}^i)$ of growth conditions at infinity and one set
$(\mathcal{G}_{0}^j)$
of growth conditions at the origin hold. Then there exists $\varepsilon_0>0$ such that for all $\varepsilon \in
(0,\varepsilon_0)$, $u_{\varepsilon}$ solves the original problem \eqref{NLSP}.
\end{Prop}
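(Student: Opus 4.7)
The plan is to show that $g_\varepsilon(x, u_\varepsilon(x)) = K(x) u_\varepsilon(x)^p$ for every $x \in \R^3$ and every $\varepsilon$ small enough; once this holds, $u_\varepsilon$ is automatically a solution of \eqref{NLSP}. Inside $\Lambda$ this is built into the definition of $g_\varepsilon$, so the task reduces to establishing the pointwise inequality
\[
K(x) \,u_\varepsilon(x)^{p-1} \leq \varepsilon^2 H(x) + \mu V(x), \qquad x \in \R^3\setminus\Lambda,
\]
for $\varepsilon$ sufficiently small, so that the minimum defining $g_\varepsilon$ outside $\Lambda$ is attained by the $Ks_+^p$ branch.

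The first ingredient is uniform exponential smallness of $u_\varepsilon$ outside $\Lambda$. Since $\bar\Lambda\cap d=\emptyset$ and Proposition~\ref{Prop1}(iii) yields $\liminf_{\varepsilon\to 0}d_{d}(x_\varepsilon,\partial\Lambda)>0$, a triangle inequality produces a constant $\delta_0>0$ with $d_{d}(x,S^1_\varepsilon)\geq \delta_0$ for every $x\notin\Lambda$ and every $\varepsilon$ small. Substituting this lower bound into \eqref{decay} of Proposition~\ref{Prop:decay} gives
\[
u_\varepsilon(x)\leq C\,e^{-\lambda_*/\varepsilon}\,(1+\abs{x})^{-1},\qquad x\in\R^3\setminus\Lambda,
\]
for some $\lambda_*>0$ independent of $x$ and $\varepsilon$, together with the refined bounds (1)--(4) from Lemma~\ref{lemma:barrier} whenever the corresponding hypothesis on $W_\varepsilon$ holds.

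The second ingredient is a case analysis matching this decay to the growth conditions on $K$, $V$ and $\rho$. In the weakest combination $(\mathcal{G}_\infty^1)\times(\mathcal{G}_0^1)$, the polynomial controls $K(x)\leq C\abs{x}^\tau(1+\abs{x})^{\sigma-\tau}$ with $\sigma<p-3$ and $\tau>-2$ combine with the baseline bound above to give
\[
K(x)\,u_\varepsilon(x)^{p-1}\leq C\,e^{-(p-1)\lambda_*/\varepsilon}\,\abs{x}^\tau (1+\abs{x})^{\sigma-(p-1)}.
\]
Since $\tau+2>0$ and $\sigma-(p-1)<-2$, the right-hand side is dominated by $\varepsilon^2\,\abs{x}^{-2}\bigl((\log\abs{x})^2+1\bigr)^{-(1+\beta)/2}$ up to the factor $\varepsilon^2 e^{(p-1)\lambda_*/\varepsilon}$, which tends to $+\infty$ as $\varepsilon\to 0$; hence it is absorbed into $\varepsilon^2 H(x)$ by the very definition of $H$. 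In the stronger regimes $(\mathcal{G}_\infty^{2,3})$ or $(\mathcal{G}_0^{2,3})$, the lower bound $V(x)\geq c\abs{x}^{-\alpha}$ (respectively $c\abs{x}^{-\gamma}$) provides the dominating term, and the sharpened decay estimates (1)--(4) of Proposition~\ref{Prop:decay} contribute matching exponential factors at infinity or near the origin, so that the same comparison goes through termwise.

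The main obstacle is executing the case analysis uniformly across the nine combinations of growth assumptions, making sure to pair each improved decay estimate on $u_\varepsilon$ with the corresponding improved lower bound on $W_\varepsilon$. The essential simplification, however, is that outside $\Lambda$ the universal exponential factor $e^{-\lambda_*/\varepsilon}$ already overwhelms any polynomial in $\varepsilon^{-1}$, so only mild polynomial bookkeeping between $K$ and $V+H$ is actually required. Once the pointwise inequality above is established, $\min\bigl((\varepsilon^2 H+\mu V)u_\varepsilon,\,K u_\varepsilon^p\bigr)=K u_\varepsilon^p$ throughout $\R^3\setminus\Lambda$, so $g_\varepsilon(x,u_\varepsilon)=K(x)u_\varepsilon^p$ everywhere and $u_\varepsilon$ solves the original system \eqref{NLSP}.
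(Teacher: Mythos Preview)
Your proposal is correct and follows essentially the same approach as the paper: use the decay estimate of Proposition~\ref{Prop:decay} to verify that $K(x)u_\varepsilon(x)^{p-1}\le \varepsilon^2 H(x)+\mu V(x)$ on $\R^3\setminus\Lambda$, so that the penalized nonlinearity reduces to $K u_\varepsilon^p$ there. The paper only writes out the case $(\mathcal{G}_\infty^1)\times(\mathcal{G}_0^1)$ and then says ``the other cases can be treated in a similar way''; your sketch is in fact slightly more careful in separating the behaviour near the origin from the behaviour at infinity and in indicating which refined decay bound from Lemma~\ref{lemma:barrier} is paired with each growth regime. One small inaccuracy: in $(\mathcal{G}_\infty^{2})$ and $(\mathcal{G}_\infty^{3})$ the lower bound is imposed on $W=V+\rho/(1+\abs{x})$, not on $V$ itself, so the dominating term there comes from $W_\varepsilon$ rather than from $V$ alone.
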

\begin{proof}
 We infer from Lemma \ref{lemma:no0} that there exists a family of points $(x_{\varepsilon})_{\varepsilon>0} \subset
\Lambda$ such that
\begin{align*}
 \liminf_{\varepsilon\to 0} u_{\varepsilon}(x_{\varepsilon}) > 0.
\end{align*}
Assume that the assumptions $(\mathcal{G}_{\infty}^1)$ and $(\mathcal{G}_{0}^1)$ are satisfied.
By Proposition \ref{Prop:decay}, we obtain for $\varepsilon > 0$ small enough and $x \in \R^3\setminus \Lambda$,
\begin{align*}
 K(x) u_{\varepsilon}^{p-1} &\leq M (1+\abs{x})^{\sigma} \left( C e^{\frac{-\lambda}{\varepsilon}}
\frac{1}{\abs{x}} \right)^{p-1} \\
&\leq C e^{\frac{-\lambda}{\varepsilon}(p-1)} (1+\abs{x})^{-(p-1)+\sigma} \\
&\leq \frac{\varepsilon^2 \kappa}{\abs{x}^2 \left( (\log \abs{x})^2+1 \right)^{\frac{1+\beta}{2}}} = \varepsilon^2 H(x).
\end{align*}
We conclude by definition of the penalized nonlinearity $g_{\varepsilon}$ that
$g_{\varepsilon}(x,u_{\varepsilon}(x)) = K(x) u_{\varepsilon}^p(x)$, and hence $u_{\varepsilon}$ solves the original
problem \eqref{NLSP}. The other cases can be treated in a similar way.
\end{proof}

\begin{proof}[Proof of Theorem \ref{Th:main}]
We proved in Theorem \ref{expena} that, for any $\varepsilon > 0$, the penalized problem \eqref{NLSPropen} possesses a
solution $u_{\varepsilon}$. By Proposition \ref{exinit}, for $\varepsilon > 0$ small enough, $u_{\varepsilon}$ is a
solution
of the initial problem \eqref{NLSP}. The existence of a sequence $(x_{\varepsilon})_{\varepsilon>0} \subset
\Lambda$ such that
\begin{align*}
 \liminf_{\varepsilon\to 0} u_{\varepsilon}(x_{\varepsilon}) > 0
\end{align*}
follows from Lemma \ref{lemma:no0} and the concentration result follows from Proposition \ref{Prop1}. Finally,
Proposition \ref{Prop:decay} yields the decay estimate.
\end{proof}

\section{Remarks and further results}

\subsection{Concentration at points}
We can also obtain a result about solutions concentrating at points. In this case, the concentration function is given
by
\begin{align*}
 \mathcal{A}(x) = \left[V(x)\right]^{\frac{p+1}{p-1} - \frac{3}{2}}
\left[K(x)\right]^{\frac{-2}{p-1}}.
\end{align*}

\begin{Thm}\label{Th:points}
Let $3 < p < 5$, $V, K \in C(\R^3 \backslash \left\{ 0 \right\},\R^+)$,
$K\not\equiv 0$
and $\rho \in L^{3/2}_{\text{loc}}(\R^3)\cap L^{\infty}_{\text{loc}}\left(\R^3\setminus \left\{ 0 \right\}\right)$.
Assume that one set $(\mathcal{G}_{\infty}^i)$ of growth conditions at infinity and one set $(\mathcal{G}_{0}^j)$ of
growth conditions
at the origin hold.
Assume also that there exists a smooth open bounded set $\Lambda \subset \R^3$ such that
\begin{align}\label{hyp:Lambda}
 0 < \inf_{\Lambda} \mathcal{A} < \inf_{\partial \Lambda} \mathcal{A}.
\end{align}
 Then there exists $\varepsilon_0 > 0$ such that for every
$0 < \varepsilon < \varepsilon_0$, problem \eqref{NLSP} has at least one positive solution $u_{\varepsilon}$.
Moreover, for every $0 < \varepsilon < \varepsilon_0$, there exists $x_{\varepsilon} \in \Lambda$
such that $u_{\varepsilon}$ attains its maximum at $x_{\varepsilon}$,
\begin{align*}
 \liminf_{\varepsilon \to 0} u_{\varepsilon}(x_{\varepsilon}) &> 0, \\
 \lim_{\varepsilon \to 0} \mathcal{A} (x_{\varepsilon}) &= \inf_{\Lambda} \mathcal{A},
\end{align*}
and there exist $C>0$ and $\lambda > 0$ such that
\begin{align*}
 u_{\varepsilon}(x) &\leq C \exp{\left( -\frac{\lambda}{\varepsilon}
\frac{\abs{x-x_{\varepsilon}}}{1+\abs{x-x_{\varepsilon}}}\right) }
\left( 1+\abs{x-x_{\varepsilon}}^2 \right)^{\frac{-1}{2}}, &  \forall x \in \R^3.
\end{align*}
\end{Thm}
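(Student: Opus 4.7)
The proof follows the same three-step strategy used to prove Theorem \ref{Th:main} (penalization, asymptotic analysis, removal of penalization), with the essential difference that the limit equation is posed on $\R^3$ rather than on $\R^2$, concentration takes place at a single point rather than on a circle, and no cylindrical symmetry is imposed. Accordingly, one works in $X$ in place of its symmetric subspace $E$, and the 3D ground-energy function $\mathcal{E}_3(a,b) = \mathcal{E}_3(1,1) a^{\frac{p+1}{p-1} - \frac{3}{2}} b^{-\frac{2}{p-1}}$ replaces the 2D version; this is the source of the exponent in $\mathcal{A}$, and is the reason $p < 5$ is imposed (to ensure Sobolev-subcriticality in $\R^3$), while $p > 3$ is still needed to bound Palais--Smale sequences. \textbf{Step 1 (penalization and existence):} the penalization potential $H$ and nonlinearity $g_\varepsilon$ are kept unchanged, and the entire chain of Lemmas \ref{MMMPPP}--\ref{HVVV2} transfers verbatim -- Hardy's inequality, vanishing of negative-part interactions, and the compact embeddings of $X$ into $L^q(\Lambda)$ and into $L^2_{\varepsilon^2 H + \mu V}(B \setminus \Lambda)$ are all symmetry-free -- producing a positive mountain pass critical point $u_\varepsilon \in X$ at level $c_\varepsilon$.

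\textbf{Step 2 (asymptotic analysis).} For the upper bound, pick $x_0 \in \Lambda$ with $\mathcal{A}(x_0) = \inf_\Lambda \mathcal{A}$ and test with $\bar\gamma_\delta(t)(x) := \eta(x) \gamma_\delta(t)\bigl(\frac{x - x_0}{\varepsilon}\bigr)$, where $\eta$ is a cut-off supported in $\Lambda$ and $\gamma_\delta$ nearly realizes the mountain pass level of the 3D limit functional at $(V(x_0), K(x_0))$. A change of variables gives
\[
\varepsilon^{-3} J_\varepsilon(\bar\gamma_\delta(t)) = I^{3D}_{0}(\gamma_\delta(t)) + o(1),
\]
the Poisson contribution being of order $\varepsilon^5$ and hence negligible; this yields $c_\varepsilon \leq \varepsilon^3 (\mathcal{E}_3(1,1) \inf_\Lambda \mathcal{A} + o(1))$. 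For the lower bound, an analogue of Proposition \ref{lemma:no0} furnishes points where $u_\varepsilon$ does not vanish; rescaling $v_n(y) := u_{\varepsilon_n}(x_n + \varepsilon_n y)$ around such points and extracting $C^1_{\text{loc}}(\R^3)$-convergent subsequences as in Lemma \ref{convergentSubsequence}, one identifies the limit with a finite-energy solution of the 3D limit equation at $(V(\bar x), K(\bar x))$, whence energy comparison gives $\liminf \varepsilon_n^{-3} c_{\varepsilon_n} \geq \mathcal{E}_3(1,1) \mathcal{A}(\bar x)$. Matching the two bounds together with \eqref{hyp:Lambda} forces $\bar x$ to be an interior minimizer of $\mathcal{A}$ and precludes multi-peak concentration.

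\textbf{Step 3 (decay, removal of penalization) and main obstacle.} The barrier and comparison-function arguments of Section \ref{sect:init} transfer essentially verbatim upon replacing $d_{d}(x, x_\varepsilon)$ by $|x - x_\varepsilon|$, providing the exponential decay estimate. Combined with the growth conditions $(\mathcal{G}_\infty^i)$ and $(\mathcal{G}_0^j)$, this forces $K u_\varepsilon^{p-1} \leq \varepsilon^2 H$ outside $\Lambda$ for all small $\varepsilon$, so the penalization is inactive and $u_\varepsilon$ solves \eqref{NLSP} as in Proposition \ref{exinit}. The chief technical point throughout is to verify that the Poisson term is genuinely subleading in the rescaled asymptotic analysis: the scaling separation $\varepsilon^5$ versus $\varepsilon^3$ implies that the rescaled potential $\tilde\phi_n(y) := \phi_{u_{\varepsilon_n}}(x_n + \varepsilon_n y)$ tends to zero in $\mathcal{D}^{1,2}(\R^3)$, so the limit equation carries no Poisson term; this is the analogue of the use of \eqref{boundgradphi} in Lemma \ref{convergentSubsequence} and is what makes the reduction to the pure NLS analysis of \cite{BVS,MVS} possible.
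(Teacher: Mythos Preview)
Your proposal is correct and follows essentially the same approach as the paper's own sketch: drop the symmetry and work in $X$, use the $\R^3$ limit equation (whence the exponent in $\mathcal{A}$ and the restriction $p<5$), obtain the upper bound $c_\varepsilon \leq \varepsilon^3(c_0+o(1))$ via a rescaled test path with the Poisson term of order $\varepsilon^5$, derive the lower bound through $C^1_{\text{loc}}$ convergence of rescaled solutions, and finish with the comparison arguments of Section~\ref{sect:init} with $|x-x_\varepsilon|$ replacing $d_d$. The paper identifies the same key technical point, namely that the rescaled Poisson potential vanishes in the limit so that the limiting equation carries no nonlocal term.
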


The proof of this theorem is similar to the proof of Theorem \ref{Th:main}, but simpler. Let us only sketch the proof.
First of all,
we impose no symmetry neither on the potentials nor on the solution. This makes the critical Sobolev  exponent to appear, in spite of what happens in the preceding results. We modify the problem in the same way as before and
we
search for a critical point of the functional $J_{\varepsilon}$ in the space $X$ defined by \eqref{def:X}. Theorem
\ref{expena} remains
true with the same proof.

The limiting problem associated to concentration at points is the problem
\begin{align*}
	 -\Delta u + au = bu^p \hspace{1cm} \text{in} \ \R^3.
\end{align*}
Let $x_0 \in \Lambda$ be a point such that $\mathcal{A}(x_0) = \inf_{\Lambda} \mathcal{A}$. We denote by $c_0$
the least energy critical value of the limiting problem with $a = V(x_0)$ and $b = K(x_0)$. As in Proposition
\ref{estim:inf}, we prove that the critical value $c_{\varepsilon}$ defined in \eqref{ceps} satisfies
 \[
 c_{\varepsilon} \leq \varepsilon^{3} \left( c_0 + o(1) \right)\
\text{as}\ \varepsilon \rightarrow 0.
 \]
Then we prove as before that the $L^{\infty}$-norm of $u_{\varepsilon}$ does not converge to $0$ in $\Lambda$ and
that the sequence of rescaled solutions converges in $C^1_{\text{loc}}$ to a solution of the limiting equation.
The analogous of Proposition \ref{estim:sup} is the following one.
\begin{Prop}
 Let $(\varepsilon_n)_n \subset \R^+$ and $(x^i_n)_n \subset \R^3$
be sequences such that $\varepsilon_n \to 0$ and for $1\leq i \leq M$, $x^i_n\to
\bar{x}^i \in \Bar{\Lambda}$ as $n\to \infty$. If for every $1\leq i < j \leq M$, we have
\begin{align*}
 \limsup_{n\to\infty} \frac{\abs{x^i_n-x^j_n}}{\varepsilon_n} = \infty
\end{align*}
and if for every $1\leq i \leq M$,
\[
 \liminf_{n\to\infty} u_{\varepsilon_n}(x^i_n) > 0,
\]
then
 \begin{align*}
 \liminf_{n\to\infty} \varepsilon_n^{-3} c_{\varepsilon_n} \geq \sum_{i=1}^M \mathcal{C}(\bar{x}^i),
 \end{align*}
where $\mathcal{C}(x)$ is the least energy critical value of the limiting problem with
$a = V(\bar{x})$ and $b = K(\bar{x})$.
\end{Prop}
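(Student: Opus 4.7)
The proof is a direct adaptation of Proposition~\ref{estim:sup} to concentration at points in $\R^3$. Since the Poisson term $\tfrac{1}{4}\int_{\R^3} \phi_{u_{\varepsilon_n}} u_{\varepsilon_n}^2\rho\,dx$ appearing in $c_{\varepsilon_n} = J_{\varepsilon_n}(u_{\varepsilon_n})$ is nonnegative, it may simply be dropped when seeking a lower bound. The plan is therefore to establish two estimates, one inside and one outside shrinking balls around the points $x_n^i$, mirroring the two lemmas preceding Proposition~\ref{estim:sup}.

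For the \emph{inside estimate}, I would fix $i$ and rescale by setting $v_n^i(y) := u_{\varepsilon_n}(x_n^i + \varepsilon_n y)$. As in Lemma~\ref{convergentSubsequence}, but now working on all of $\R^3$ and without the cylindrical correction $\varepsilon_n z^{-1}\partial_z$, the rescaled sequence satisfies an equation whose nonlocal term $\tilde\phi_n v_n^i$ is small in $L^2_{\mathrm{loc}}$ thanks to the point-concentration analogue of \eqref{boundgradphi} (giving $\int \phi_u u^2 \rho \leq C\varepsilon^3$, so $\|\tilde\phi_n\|_{\mathcal{D}^{1,2}}\to 0$). Hence $v_n^i \to v^i$ in $C^1_{\mathrm{loc}}(\R^3)$ by $W^{2,q}$ elliptic regularity, with $v^i$ a nontrivial solution of $-\Delta v + V(\bar{x}^i) v = K(\bar{x}^i) v^p$ on $\R^3$; nontriviality uses $\liminf_n u_{\varepsilon_n}(x_n^i)>0$. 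Fatou's lemma applied to the rescaled energy integral, combined with the least-energy characterization of $\mathcal{C}(\bar x^i)$, then yields
\[
\liminf_{R\to\infty}\liminf_{n\to\infty} \varepsilon_n^{-3} \int_{B(x_n^i,\varepsilon_n R)}\!\!\left(\tfrac{1}{2}(\varepsilon_n^2|\nabla u_{\varepsilon_n}|^2 + V u_{\varepsilon_n}^2) - G_{\varepsilon_n}(x,u_{\varepsilon_n})\right) dx \geq \mathcal{C}(\bar{x}^i).
\]

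For the \emph{outside estimate} on $\R^3 \setminus \mathcal{T}_n(R)$, with $\mathcal{T}_n(R) := \bigcup_{i=1}^M B(x_n^i,\varepsilon_n R)$, I would use $(g_3)$ to bound $G_{\varepsilon_n}(x,u_{\varepsilon_n}) \leq \tfrac{1}{2}(\varepsilon_n^2 H + \mu V)u_{\varepsilon_n}^2$ on $\R^3\setminus\Lambda$, then invoke Lemma~\ref{Lambdaout} and the smallness of $\mu$ to absorb this term into $\tfrac12(\varepsilon_n^2|\nabla u_{\varepsilon_n}|^2 + V u_{\varepsilon_n}^2)$ with a positive remainder. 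On $\Lambda \setminus \mathcal{T}_n(R)$, the point-concentration analogue of Proposition~\ref{Prop1}(iv) guarantees $u_{\varepsilon_n}\to 0$ uniformly, so $K u_{\varepsilon_n}^{p-1}$ becomes arbitrarily small and again the quadratic form dominates. The pairwise disjointness of the balls $B(x_n^i,\varepsilon_n R)$ for large $n$, guaranteed by the hypothesis $\limsup_n |x_n^i - x_n^j|/\varepsilon_n = \infty$ (after passing to a subsequence), then allows us to add the inside contributions and conclude
\[
\liminf_{n\to\infty} \varepsilon_n^{-3} c_{\varepsilon_n} \geq \sum_{i=1}^M \mathcal{C}(\bar{x}^i).
\]

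The main obstacle is the first step: in the presence of the nonlocal Poisson coupling, the rescaled equation is not purely local, so the $C^1_{\mathrm{loc}}$ compactness of $(v_n^i)_n$ must be justified carefully, and one must verify that the contribution of $\tilde\phi_n v_n^i$ vanishes when passing to the limit in the rescaled problem. This relies on the sharp $\varepsilon^3$ scaling of the Poisson energy, which is the point-concentration analogue of \eqref{boundgradphi} and which is itself a consequence of the upper estimate $c_{\varepsilon}\leq \varepsilon^3(c_0+o(1))$ recorded earlier in this section.
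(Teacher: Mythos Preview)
Your overall strategy---drop the nonnegative Poisson term, establish inside and outside estimates, then sum---matches the paper's approach (which simply points to Proposition~\ref{estim:sup} and hence to the two lemmas preceding it and \cite[Proposition~16]{BVS}). Your treatment of the inside estimate and of the vanishing of the rescaled nonlocal term via the $\varepsilon^3$ scaling of the Poisson energy is correct.

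The gap is in the outside estimate on $\Lambda \setminus \mathcal{T}_n(R)$. You invoke ``the point-concentration analogue of Proposition~\ref{Prop1}(iv)'' to obtain uniform smallness of $u_{\varepsilon_n}$ there, but Proposition~\ref{Prop1} is itself proved \emph{using} Proposition~\ref{estim:sup} (its proof reads: ``All the assertions are proved by energy comparisons, using only Propositions~\ref{estim:inf} and~\ref{estim:sup}''). So the argument is circular: at this stage you have only the $C^1_{\mathrm{loc}}$ convergence of the rescaled solutions from the analogue of Lemma~\ref{convergentSubsequence}, not yet the uniform decay outside balls. The standard remedy (as in \cite[Lemma~4.5]{BDCVS} and \cite[Proposition~16]{BVS}) is a test-function argument: pair the equation with $\psi_n^2 u_{\varepsilon_n}$ for a cutoff $\psi_n$ vanishing on $\mathcal{T}_n(R)$ and equal to $1$ outside $\mathcal{T}_n(2R)$, then use $(g_3)$, the positivity estimate~\eqref{positivity}, and the already available $C^1_{\mathrm{loc}}$ convergence to show that the annular cross-terms involving $\nabla\psi_n$ are $o(\varepsilon_n^{3})$ as $R\to\infty$. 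This route also resolves the secondary issue that your appeal to Lemma~\ref{Lambdaout} on $\R^3\setminus\Lambda$ alone is not quite legitimate, since Hardy's inequality needs the full-space gradient integral.
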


The concentration result can be stated as follows.
 \begin{Prop}
 Suppose that $\Lambda$ satisfies \eqref{hyp:Lambda}.
If $(x_{\varepsilon})_{\varepsilon>0} \subset \Lambda$ is such that
\begin{align*}
 \liminf_{\varepsilon\to 0} u_{\varepsilon}(x_{\varepsilon}) > 0,
\end{align*}
then
\begin{enumerate}[(i)]
 \item $\lim_{\varepsilon\to 0} \mathcal{A}(x_{\varepsilon}) = \inf_{\Lambda} \mathcal{A}$,
 \item $\liminf_{\varepsilon \to 0} d(x_{\varepsilon}, \partial \Lambda) > 0$,
 \item for every $\delta>0$, there exists $\varepsilon_0>0$ and $R>0$ such that,
for every $\varepsilon \in (0,\varepsilon_0)$,
\[
 \norm{u_{\varepsilon}}_{L^{\infty}\left(\Lambda\setminus B(x_{\varepsilon},\varepsilon R)\right)} \leq \delta.
\]
\end{enumerate}
\end{Prop}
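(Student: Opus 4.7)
My plan is to follow Proposition~\ref{Prop1} line by line, with the simplifications that no symmetry plane $\pi$ enters and all balls are the usual Euclidean ones instead of $B_{d}$. The three ingredients I will combine are: (a) the upper energy bound $c_\varepsilon \leq \varepsilon^{3}(c_0 + o(1))$ stated in the discussion preceding this proposition, with $c_0 := \mathcal{C}(x_0)$ for any $x_0$ realising $\inf_{\Lambda}\mathcal{A}$; (b) the lower energy estimate of the preceding proposition; (c) the scaling identity $\mathcal{C}(x) = \mathcal{C}(1,1)\,\mathcal{A}(x)$, coming from the natural dilation symmetry of the 3D limit equation $-\Delta u + a u = b u^{p}$. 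In particular $\inf_{\Lambda}\mathcal{C} = \mathcal{C}(1,1)\inf_{\Lambda}\mathcal{A} > 0$ and the infima of $\mathcal{C}$ and $\mathcal{A}$ on $\Lambda$ are attained at the same points.

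I will handle (i) and (ii) simultaneously. Pick any subsequence $\varepsilon_n \to 0$ with $x_{\varepsilon_n} \to \bar x \in \overline{\Lambda}$. Applying the preceding proposition with $M=1$ at the single point $x_{\varepsilon_n}$ (whose nondegeneracy holds by hypothesis) and combining with the upper bound will yield
\begin{align*}
\mathcal{C}(\bar x) \;\leq\; \liminf_{n\to\infty}\varepsilon_n^{-3}\,c_{\varepsilon_n} \;\leq\; c_0 \;=\; \inf_{\Lambda}\mathcal{C}.
\end{align*}
By (c) this forces $\mathcal{A}(\bar x) = \inf_{\Lambda}\mathcal{A}$, and the strict inequality in \eqref{hyp:Lambda} then prevents $\bar x$ from lying on $\partial \Lambda$; this proves (ii), and since every subsequential limit of $\mathcal{A}(x_\varepsilon)$ equals $\inf_{\Lambda}\mathcal{A}$, (i) follows as well.

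For (iii) I will argue by contradiction: negating it provides $\delta > 0$, sequences $\varepsilon_n \to 0$, $R_n \to \infty$ and points $y_n \in \Lambda \setminus B(x_{\varepsilon_n}, \varepsilon_n R_n)$ with $u_{\varepsilon_n}(y_n) \geq \delta$. After extracting a subsequence, $y_n \to \bar y \in \overline{\Lambda}$ and $x_{\varepsilon_n} \to \bar x$, and the condition $R_n \to \infty$ forces $\abs{x_{\varepsilon_n}-y_n}/\varepsilon_n \to \infty$. Invoking the preceding proposition with $M=2$ on the pair $\{x_{\varepsilon_n}, y_n\}$ will then give
\begin{align*}
\liminf_{n\to\infty}\varepsilon_n^{-3}\,c_{\varepsilon_n} \;\geq\; \mathcal{C}(\bar x) + \mathcal{C}(\bar y) \;\geq\; 2\inf_{\Lambda}\mathcal{C},
\end{align*}
which contradicts the upper bound since $\inf_{\Lambda}\mathcal{C} > 0$.

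The main (mild) obstacle is checking that two rescaled bubbles sitting at $x_{\varepsilon_n}$ and $y_n$ contribute additively to the energy; this is precisely what the rescaled-separation hypothesis $\abs{x_{\varepsilon_n}-y_n}/\varepsilon_n \to \infty$ of the preceding proposition provides. Unlike the circle case, no analogue of condition \eqref{hypLambda4} is required, because $\inf_{\Lambda}\mathcal{C} < 2\inf_{\Lambda}\mathcal{C}$ holds for free, and this is what makes the present proof strictly simpler than that of Proposition~\ref{Prop1}.
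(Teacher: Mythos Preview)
Your argument is correct and follows exactly the energy-comparison scheme the paper indicates (and which, for Proposition~\ref{Prop1}, it attributes to \cite[Proposition 4.7]{BDCVS}): combine the upper bound $c_\varepsilon \le \varepsilon^{3}(c_0+o(1))$ with the lower bound of the preceding proposition, first with $M=1$ to obtain (i)--(ii), then with $M=2$ to derive (iii) by contradiction. Your remark that no analogue of \eqref{hypLambda4} is needed here is also on point and explains the simplification relative to the circle case.
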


Finally the comparison arguments in order to get back to the original problem are the same as in section
\ref{sect:init}.

\subsection{Concentration on spheres}
Using the same method, we can prove the existence of solutions concentrating on a sphere for the following problem.
\begin{equation*}
\left\{
\begin{array}{l}
-\varepsilon^2 \Delta u+V(x)u+\rho(x)\phi u = K(x)u^p, \,\,\,\,\,\, x \in {\mathbb R}^{3},  \\
  \\
-\Delta \phi = \varepsilon \rho(x)u^{2}.
\end{array}
\right.
\end{equation*}
However we are not sure whether this problem has a physical meaning.

\subsection{Concentration on Keplerian orbits}
An interesting question related to \cite{GayDelandeBommier,NAU}, concerns the existence of solutions concentrating on Kepler orbits, assuming radial potentials. For the reasons described in the Introduction, this might be a typical situation where the correspondence principle can be checked using solutions localized on classical planar orbits. We wonder if this result could be obtained for the $3D$ nonlinear Schr\"odinger and Schr\"odinger-Poisson equations with radial potentials.

\subsection{Concentration driven by $\rho$}
If $V \equiv K \equiv 1$, it is natural to ask whether there still exist solutions with a concentration
behaviour. In this case, we expect the location of the concentration points to be governed by the weight $\rho$.
The asymptotic analysis seems more delicate since it requires higher order estimates.

\vspace{0.5cm}
\textbf{Acknowledgements} The authors would like to thank Professor Antonio Ambrosetti for taking their attention
to these questions. C.M. would like to thank the members of the department of Mathematics of Universit{\'e} Libre de
Bruxelles for the kind hospitality and friendship. C.M. was partially supported by FIRB Analysis and Beyond and PRIN
2008 Variational Methods and Nonlinear Differential Equations.


\end{document}